\documentclass[10pt]{amsart}

\usepackage[utf8]{inputenc}
\usepackage[T1]{fontenc}
\usepackage[english]{babel}
\usepackage{amsmath,amssymb,amsfonts,amsthm}
\usepackage{amsmath,amsthm,amsfonts,amssymb, hyperref}
\usepackage[pdftex]{graphicx}
\usepackage{enumitem}

\newtheorem{theorem}{Theorem}[section]

\newtheorem{proposition}[theorem]{Proposition}
\newtheorem{corollary}[theorem]{Corollary}
\newtheorem{remark}[theorem]{Remark}
\newtheorem{lemma}[theorem]{Lemma}
\newtheorem{conjecture}{Conjecture}

\theoremstyle{definition}

\renewcommand{\ll}{\langle\langle}
\newcommand{\rr}{\rangle\rangle}

\newcommand{\bt}{\mathbf{t}}
\newcommand{\bn}{\mathbf{n}}
\newcommand{\mH}{\mathcal H}
\newcommand{\mE}{\mathcal E}
\newcommand{\mC}{\mathcal C}
\newcommand{\mL}{\mathcal L}
\newcommand{\mD}{\mathcal D}

\DeclareMathOperator{\im}{\mathrm{im}}

\begin{document}

\title[Steklov problem on differential forms]{Steklov problem on differential forms}

\begin{abstract}
% In this paper a new definition of Dirichlet-to-Neumann map $\Lambda$ on differential forms is proposed. Our principal motivations are response operator for Maxwell equations and paper~\cite{BS} by Belishev and Sharafutdinov. A slight modification of definition from~\cite{BS} yields operator $\Lambda$ which is shown to be self-adjoint on the subspace of coclosed forms and to have purely discrete spectrum. 

In this paper we study spectral properties of Dirichlet-to-Neumann map on differential forms obtained by a slight modification of the definition due to Belishev and Sharafutdinov~\cite{BS}. The resulting operator $\Lambda$ is shown to be self-adjoint on the subspace of coclosed forms and to have purely discrete spectrum there.
We investigate properies of eigenvalues of $\Lambda$ and prove a Hersch-Payne-Schiffer type inequality relating products of those eigenvalues to eigenvalues of Hodge Laplacian on the boundary. Moreover, non-trivial eigenvalues of $\Lambda$ are always at least as large as eigenvalues of Dirichlet-to-Neumann map defined by Raulot and Savo~\cite{RS}. Finally, we remark that a particular case of $p$-forms on the boundary of $2p+2$-dimensional manifold shares a lot of important properties with the classical Steklov eigenvalue problem on surfaces. 
\end{abstract}

\author[Mikhail Karpukhin]{Mikhail Karpukhin}
\address{Department of Mathematics and Statistics, McGill University, Burnside Hall,
805 Sherbrooke Street West,
Montreal, Quebec,
Canada,
H3A 0B9}
\email{mikhail.karpukhin@mail.mcgill.ca}
\maketitle

\section{Introduction}

Let $M$ be a compact Riemannian manifold of dimension $n$ with smooth boundary $\partial M$. Recently there has been a lot of research dedicated to Steklov eigenvalue problem which is defined in the following way. Number $\sigma$ is called a {\em Steklov eigenvalue} of $M$ provided there exists a non-zero solution $u\in C^\infty(M)$ to the following problem
\begin{equation}
\left\{
   \begin{array}{rcl}
	\Delta u &=& 0\quad \mathrm{on}\,\, M,\\
	\partial_nu &=& \sigma u\quad \mathrm{on}\,\, \partial M.
   \end{array}
\right.
\end{equation}

Steklov eigenvalues coincide with eigenvalues of the Dirichlet-to-Neumann operator $\mD\colon C^\infty(\partial M)\to C^\infty(\partial M)$. Operator $\mD$ sends a function $v$ to normal derivative of its harmonic extension. Then $\mD$ is a self-adjoint elliptic pseudodifferential operator of order $1$, i.e. Steklov eigenvalues form a sequence tending to $+\infty$. For details we refer the reader to survey paper~\cite{GPsurvey} and references therein.

In the present paper we study Steklov eigenvalues on the space of differential forms on $M$. Several definitions of Dirichlet-to-Neumann operator are present in the literature, see e.g.~\cite{BS,JL,RS}.
Definition commonly used in spectral theory literature is due to Raulot and Savo~\cite{RS} and has an advantage of being a positive elliptic self-adjoint pseudodifferential operator of order 1. However, in literature on inverse problems different definitions of Dirichlet-to-Neumann map are used, see e.g. full Direchlet-to-Neumann map in~\cite{JL,SS} and definition due to Belishev and Sharafutdinov~\cite{BS} which was motivated by Maxwell equations. In the present paper we modify the latter to obtain a self-adjoint operator with purely discrete spectrum and study its eigenvalues. We plan to tackle spectral theory of the full Dirichlet-to-Neumann map in a subsequent article.

%
% Definition of Belishev and Sharafutdinov~\cite{BS} is motivated by inverse problems for Maxwell equation but has a significant downside from the spectral theory point of view: it does not have an eigenvalue spectrum similar to the classical Dirichlet-to-Neumann map. On the other hand, the operator defined by Raulot and Savo~\cite{RS} is a positive elliptic self-adjoint pseudodifferential operator of order 1. However there is no obvious motivation behind that particular definition. In this paper, slightly modifying the definition of Belishev and Sharafutdinov we obtain an operator with an eigenvalue spectrum (smth about Maxwell)

\section{Main results}
\label{Statements}

\subsection{Notations}

In the following $(M,g)$ is always assumed to be a smooth compact orientable manifold of dimension $n$ with smooth nonempty boundary $\partial M$. It seems that orientability is a purely technical condition that could be eliminated with further investigations. Theorem~\ref{HPS} below, however, requires orientability in an essential way.

Let $(X,h)$ be a compact Riemannian manifold, possibly with boundary. The space of smooth differential $p$-forms on $X$ will be denoted by $\Omega^p(X)$. By $\mE^p(X)\subset\mC^p(X)\subset\Omega^p(X)$ we denote the spaces of smooth exact and closed $p$-forms respectively. A letter $c$ in front of either of them denotes the prefix "co-", concatenation of the letters stands for intersection, e.g. $\mathcal C c\mathcal C^p(X)$ is the space of closed and coclosed $p$-forms which in the following will be denoted by $\mH^p(X)$. If $\partial X = \varnothing$ then $\mH^p(X)$ coincides with the space of harmonic forms, i.~e. the kernel of the Hodge-Laplace operator. 

However, if $\partial X\ne\varnothing$, those spaces are different and we refer to elements of $\mathcal H^p(X)$ as {\em harmonic fields} and reserve the term {\em harmonic form} for elements of $\ker\Delta$. 
Let $i\colon \partial X\to X$ be an embedding of the boundary and let $i_n$ denote contraction of a differential form with the outer unit normal vector field. The form $\omega\in\Omega^p(X)$ satisfies Dirichlet (resp. Neumann) boundary condition if $i^*\omega = 0$ (resp. $i_n\omega = 0$). We use subscripts $D$ and $N$ to indicate spaces of forms satisfying Dirichlet or Neumann boundary conditions. Finally, for $\omega\in\Omega^p(M)$ we denote by $\bt\omega,\bn\omega\in\Gamma(i^*\Omega^p(X))$ the tangent and normal parts of $\omega$ on the boundary, i.e. $\bt\omega$ is $i^*\omega$ considered as a section of $i^*\Omega^p(X)$ and $\bn\omega = dn\wedge i_n\omega$, where $dn$ is a 1-form, dual to the outer unit normal vector field. In practice, the only difference between $\bn\omega$ and $i_n\omega$ for example, is the way Hodge $*$-operator acts on them, see Proposition~\ref{inbn} below.

For a subspace $V\subset\Omega^p(X)$ we denote by $H^sV\subset H^s\Omega^p(X)$ the completion of $V$ with respect to the Sobolev $H^s$-norm. We write $L^2$ instead of $H^0$. For details on Sobolev norms, see e.g.~\cite{Schwarz} Section 1.3. We use angle brackets $\langle\cdot,\cdot\rangle$ to denote pointwise $L^2$-inner product, double angle brackets $\ll\cdot,\cdot\rr$ to denote integrated $L^2$-inner product, round brackets $(\cdot,\cdot)$ to denote the $H^{-s}\times H^s \to \mathbb{R}$ duality pairing and $||\cdot||_{H^s}$ to denote $H^s$-norm. Usually it is clear from the context whether we are working on the boundary or on the manifold itself. In cases where it needs clarification, we add subscript indicating the ambient space, e.g. $||\cdot||_{L^2(X)}$ or $||\cdot||_{H^{1/2}(\partial X)}$.

Finally, let us remind that for manifolds with boundary Green's formula states that for $\alpha,\beta\in H^1\Omega^p(M)$
\begin{equation}
\label{Green}
\int\limits_M\langle d\alpha,\beta\rangle\, dV = \int\limits_M\langle\alpha,\delta\beta\rangle\,dV + \int\limits_{\partial M}\langle i^*\alpha,i_n\beta\rangle\,dA = \int\limits_M\langle\alpha,\delta\beta\rangle\,dV + \int\limits_{\partial M} i^*\alpha\wedge *\bn\beta.
\end{equation}

\subsection{Maxwell equations} 
In the modern form, Maxwell equations are usually written on the language of differential forms on an orientable 3-dimensional Riemannian manifold, see~\cite{T}. In the exposition below we follow~\cite{KKL}. Maxwell equations have the following form
\begin{equation*}
\begin{split}
d\mE = -\partial_t\mathcal B,&\qquad d\mH = \partial_t\mathcal D, \\
\mathcal D(x,t) = *_\epsilon\mE(x,t),&\qquad \mathcal B(x,t) = *_\mu\mH(x,t), \\
d\mathcal B = 0,&\qquad d\mathcal D = 0, 
\end{split}
\end{equation*}
where $\mE$ and $\mH$ are 1-forms corresponding to electric and magnetic fields, $\mathcal B$ and $\mathcal D$ are 2-forms corresponding to magnetic flux density and electric displacements, $*_\epsilon$ and $*_\mu$ are Hodge operators for some metrics corresponding to electric permittivity and magnetic permeability. In case the 3-manifold has a boundary, there is a natural response operator $R$ that sends the component of electric field tangent to the boundary to the component of magnetic field tangent to the boundary. In paper~\cite{KKL} the authors study inverse problem of recovering the manifold $M$ given the response operator. 

Consider the simplest case $*_\epsilon = *_\mu = *$ and the time-harmonic solution to Maxwell equations,~i.e. the $t$ variable is separated and solutions depend on $t$ only via factor $e^{ik t}$ for a fixed angular frequency $k\in\mathbb{R}$. Then Maxwell equation for $\mE$ and $\mathcal B$ becomes
\begin{equation*}
\left\{
   \begin{array}{rcl}
	-ik\mathcal B &=& d\mE,\\
	d*\mathcal B &=& ik*\mE,\\
	d\mathcal B &=& 0.\\
   \end{array}
\right.
\end{equation*}
In terms of $\mE$ this system has form
\begin{equation}
\label{Maxwell}
\left\{
   \begin{array}{rcl}
	\Delta\mE &=& k^2\mE,\\
	\delta\mE &=& 0.\\
   \end{array}
\right.
\end{equation}
and the response operator sends $\bt \mE \mapsto \bt*\mathcal B = \frac{i}{k}*\bn d\mE$, i.e. it connects $\bt\mE$ with $\bn d\mE$. In the next section we use this calculation to motivate the definition of Dirichlet-to-Neumann map on differential forms for Riemannian manifolds of arbitrary dimension.

\subsection{Definition and basic properties}

Let $M$ be a compact orientable manifold with smooth non-empty boundary $\partial M$. Motivated by the particular case $k = 0$ of~(\ref{Maxwell}) we define Dirichlet-to-Neumann operator $\Lambda$ acting on the space of differential forms $\Omega^p(\partial M)$ in the following way. For any $\phi\in\Omega^p(\partial M)$ consider the equation
\begin{equation}
\label{DtN}
\left\{
   \begin{array}{rcl}
	\Delta\omega &=& 0,\\
	\delta\omega &=& 0,\\
	 i^* \omega &=& \phi.\\
   \end{array}
\right.
\end{equation}
Let us denote the space of solutions $\omega$ by $\mathcal L(\phi)$. In Proposition~\ref{L} below it is proved that $\mathcal L(\phi)$ is an affine vector space with an associated vector space $\mH^p_D(M)$. We set $\Lambda\phi := i_nd\omega$ for any $\omega\in\mathcal L(\phi)$. Since $d\mathcal H^p_D(M) = 0$, definition does not depend on the choice of $\omega$. Let us denote by $\lambda(\phi)\in\mathcal L (\phi)$ the unique solution of~(\ref{DtN}) satisfying $\lambda(\phi)\perp\mH^p_D(M)$.

\begin{remark}
In~\cite{BS} the Dirichlet-to-Neumann map is defined up to a sign as $*\Lambda$.
\end{remark}
\begin{remark}
Having in mind equation~(\ref{Maxwell}), it is more natural to consider the operator $\Lambda(\lambda)$ for $\lambda\in\mathbb{R}$ defined in the same way as $\Lambda$ but instead of~(\ref{DtN}) one requires $\omega$ to be the solution of
\begin{equation*}
\left\{
   \begin{array}{rcl}
	\Delta\omega &=& \lambda\omega,\\
	\delta\omega &=& 0,\\
	 i^* \omega &=& \phi.\\
   \end{array}
\right.
\end{equation*}
However, the study of $\Lambda(\lambda)$ for $\lambda\ne 0$ exceeds the scope of the present article.
\end{remark}
Our starting point is the following theorem.
\begin{theorem}
\label{MainTheorem}
Operator $\Lambda$ is identically zero on the space $\mE^p(\partial M)$. Restricted to the space $c\mathcal C^p(\partial M)$ it is a positive self-adjoint operator with compact resolvent. In particular, its spectrum is discrete and is denoted by 
$$
0\leqslant \sigma^{(p)}_1\leqslant \sigma^{(p)}_2\leqslant\ldots \nearrow\infty,
$$
where the eigenvalues are written with multiplicity and all multiplicities are finite. The kernel satisfies $\ker\Lambda\cap c\mathcal C^p(\partial M) = i^*\mathcal{H}^p_N(M)\cap c\mathcal C^p(\partial M)$ and has dimension $I_p = \dim\mathrm{im}\{i^*\colon H^p(M)\to H^p(\partial M)\}$.

Moreover, the eigenvalues can be characterised by the following min-max formula,
$$
\sigma_k^{(p)} = \max_E \min_{\phi\perp E;\, i^*\hat\phi = \phi} \frac{||d\hat\phi||^2_{L^2(M)}}{||\phi||^2_{L^{2}(\partial M)}},
$$
where $E$ runs over all $(k-1)$-dimensional subspaces of $c\mC^p(\partial M)$. Maximum is achieved for $E = V_{k-1}$, where $V_{k-1}$ is spanned by the first $(k-1)$ eigenforms, $\phi$ being the $k$-th eigenform and $\hat\phi\in\mathcal L(\phi)$.
%$$
%\sigma_k^{(p)} = \min_E\max_{\phi\in E}\frac{\int_M\langle d\hat\phi, d\hat\phi\rangle dV_g}{\int_{\partial M}\langle \phi,\phi\rangle dV_{i^*g}},
%$$
%where $E$ runs over all $k$-dimensional subspaces of $c\mC^p(\partial M)$ and $\hat\phi$ is an arbitrary form with boundary trace $\phi$.
\end{theorem}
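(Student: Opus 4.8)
The plan is to reduce every assertion to a single Green-type identity,
\[
\ll\Lambda\phi_1,\phi_2\rr=\int_M\langle d\omega_1,d\omega_2\rangle\,dV,\qquad\omega_i\in\mL(\phi_i),
\]
which I would establish first. Given $\phi_1,\phi_2\in c\mC^p(\partial M)$ with solutions $\omega_i$ of~(\ref{DtN}), apply Green's formula~(\ref{Green}) to $\alpha=\omega_1$ and $\beta=d\omega_2$: since $\delta\omega_2=0$ and $\Delta\omega_2=0$ force $\delta d\omega_2=\Delta\omega_2-d\delta\omega_2=0$ (so that $d\omega_2$ is in fact a harmonic field), the interior term drops out and the boundary term is exactly $\ll i^*\omega_1,i_nd\omega_2\rr=\ll\phi_1,\Lambda\phi_2\rr$. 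Symmetry of the left-hand side in $\omega_1,\omega_2$ then yields both self-adjointness, $\ll\Lambda\phi_1,\phi_2\rr=\ll\phi_1,\Lambda\phi_2\rr$, and non-negativity, $\ll\Lambda\phi,\phi\rr=\|d\omega\|_{L^2(M)}^2\geqslant0$; independence of the choice of $\omega_i$ is automatic because $d$ kills $\mH^p_D(M)$.

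For the vanishing on $\mE^p(\partial M)$ I would argue constructively. If $\phi=d\psi$ with $\psi\in\Omega^{p-1}(\partial M)$, solve~(\ref{DtN}) in degree $p-1$ for the datum $\psi$, obtaining $\alpha$ with $\Delta\alpha=0$, $\delta\alpha=0$, $i^*\alpha=\psi$, and set $\omega:=d\alpha$. Then $i^*\omega=d\,i^*\alpha=\phi$, while $\delta\omega=\delta d\alpha=\Delta\alpha=0$ and $\Delta\omega=d\Delta\alpha=0$, so $\omega\in\mL(\phi)$ and $d\omega=0$; hence $\Lambda\phi=i_nd\omega=0$. Combined with the orthogonal Hodge splitting $\Omega^p(\partial M)=\mE^p(\partial M)\oplus c\mC^p(\partial M)$ on the closed manifold $\partial M$, this is what makes the restriction to $c\mC^p(\partial M)$ the natural object of study.

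The analytic heart is self-adjointness with compact resolvent, and I would work with the closed quadratic form $Q(\phi)=\|d\hat\phi\|_{L^2(M)}^2$ on the form domain $H^{1/2}c\mC^p(\partial M)$, where $\hat\phi=\lambda(\phi)$. Elliptic regularity for~(\ref{DtN}) makes the solution map $\phi\mapsto\lambda(\phi)$ bounded $H^{1/2}(\partial M)\to H^1(M)$, so that $\Lambda=i_n\circ d\circ\lambda$ is bounded $H^{1/2}(\partial M)\to H^{-1/2}(\partial M)$, i.e. of order one. The main obstacle is the matching lower bound: I must prove the G\r{a}rding-type coercivity $Q(\phi)+\|\phi\|_{L^2(\partial M)}^2\gtrsim\|\phi\|_{H^{1/2}(\partial M)}^2$ on coclosed forms. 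This is precisely the statement that the principal symbol of $\Lambda$, which annihilates the exact directions $\sigma(d)=\xi\wedge(\cdot)$ and is therefore degenerate on all of $\Omega^p$, becomes elliptic (of size $|\xi|$) once restricted to the coclosed subbundle; here I would either compute the symbol directly from the boundary model of~(\ref{DtN}) or invoke the pseudodifferential description of $*\Lambda$ from~\cite{BS}. Granting the coercivity, $Q$ is closed and non-negative, its associated self-adjoint operator is $\Lambda$, and the compact embedding $H^{1/2}(\partial M)\hookrightarrow L^2(\partial M)$ (Rellich) makes the resolvent compact, so the spectrum is discrete with finite multiplicities and accumulates only at $+\infty$.

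Finally I would read off the kernel and the variational formula. Positivity gives $\Lambda\phi=0\iff Q(\phi)=0\iff d\hat\phi=0$, so $\phi\in\ker\Lambda\cap c\mC^p(\partial M)$ exactly when $\phi$ extends to a harmonic field $\hat\phi\in\mH^p(M)$; such $\phi$ is then closed and coclosed on $\partial M$, hence harmonic there, with $[\phi]\in\im\{i^*\colon H^p(M)\to H^p(\partial M)\}$. The inclusion $i^*\mH^p_N(M)\cap c\mC^p(\partial M)\subseteq\ker\Lambda$ is immediate, since a Neumann field is a harmonic-field extension of its own trace; the reverse inclusion and the dimension count $\dim=I_p$ I would obtain from Hodge theory on manifolds with boundary---the Friedrichs decomposition of $\mH^p(M)$ together with solvability of the harmonic-field extension problem for tangential data whose class lies in $\im(i^*)$ (cf.~\cite{Schwarz})---which identifies the trace map on $\mH^p_N(M)$ with $i^*$ on cohomology. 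The min-max formula is then the Courant--Fischer principle for $\Lambda$: its Rayleigh quotient is $\ll\Lambda\phi,\phi\rr/\|\phi\|_{L^2(\partial M)}^2=Q(\phi)/\|\phi\|_{L^2(\partial M)}^2$, and the Dirichlet principle identifies $Q(\phi)$ with the minimal Dirichlet energy $\|d\hat\phi\|_{L^2(M)}^2$ among coclosed extensions of $\phi$, which is exactly the quantity appearing in the stated formula.
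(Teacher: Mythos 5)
Your overall architecture is reasonable, and several pieces match the paper closely: the Green identity $\int_{\partial M}\langle\Lambda\phi_1,\phi_2\rangle=\int_M\langle d\omega_1,d\omega_2\rangle$ giving symmetry and positivity is exactly Proposition~\ref{simmetricity}; the vanishing on $\mE^p(\partial M)$ via $\omega=d\alpha$ with $\alpha$ solving the degree-$(p-1)$ problem is the construction of Proposition~\ref{existence}; and the Dirichlet principle identifying $Q(\phi)$ with the minimal energy among all extensions is Proposition~\ref{minL}. However, your central analytic claim --- the coercivity $Q(\phi)+||\phi||^2_{L^2(\partial M)}\geqslant c\,||\phi||^2_{H^{1/2}(\partial M)}$ on coclosed forms, i.e.\ ellipticity of $\Lambda|_{c\mC^p}$ of order one --- is asserted rather than proved, and it is precisely the hard point. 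You cannot simply ``restrict the principal symbol to the coclosed subbundle'': coclosedness is a differential condition, not a pointwise one, so the restricted operator is really $P\Lambda P$ for the (pseudodifferential) Hodge projection $P$ onto $c\mC^p(\partial M)$, and its ellipticity requires a genuine symbol computation that neither you nor~\cite{BS} supplies (the Belishev--Sharafutdinov operator is \emph{not} elliptic on all of $\Omega^p(\partial M)$, having the infinite-dimensional kernel $i^*\mH^p(M)\supset\mE^p(\partial M)$). The paper explicitly flags this pseudodifferential route as an ``alternative way'' deferred to a subsequent article, so as written your proof of discreteness has a genuine gap at its core.

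The paper closes this gap by inverting the operator instead of making $\Lambda$ itself coercive. It first shows (Proposition~\ref{bijection}) that $\Lambda$ maps $i_n\mH^{p+1}(M)=(i^*\mH^p(M))^\perp$ bijectively onto itself, and then proves the a priori bound $||\omega||_{H^1(M)}\leqslant C||\psi||_{H^{-1/2}(\partial M)}$ for the solution of the \emph{Neumann-data} problem $\Delta\omega=0$, $i^*\delta\omega=0$, $i_nd\omega=\psi$ (Theorem~\ref{Sch'}). That bound comes from a weak formulation solved by the Riesz representation theorem on $H^1(\mH^p(M)^\perp)$, where $\int_M(\langle d\cdot,d\cdot\rangle+\langle\delta\cdot,\delta\cdot\rangle)$ is an equivalent inner product by a Gaffney-type inequality (Lemma 2.4.10 of~\cite{Schwarz}); no symbol calculus on the boundary is needed. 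Combined with the trace theorem this gives $||\Lambda^{-1}\psi||_{H^{1/2}}\leqslant C||\psi||_{H^{-1/2}}$, so $\Lambda^{-1}$ is compact and symmetric on $L^2$, which yields the discrete spectrum, the finite multiplicities, and (after adding back the kernel, identified via Proposition~\ref{kernel} and the DeTurck--Gluck decomposition, which also gives the dimension count $I_p$ that you only sketch) the min-max formula. If you want to keep your quadratic-form framing, the honest fix is to route the coercivity through this estimate on $M$ rather than through a G\aa rding inequality on $\partial M$.
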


\begin{remark} An alternative way to prove the first part of Theorem~\ref{MainTheorem} is to show that 
 $\Lambda |_{c\mathcal C^p}$ is an elliptic pseudodifferential operator. We intend to explore this route in a subsequent paper.
% However, we do not prove it here and rather show directly that the operator $\left(\Lambda|_{(\ker\Lambda)^{\perp}}\right)^{-1}$ is compact.
\end{remark}

\subsection{Main results} 
Our main results are concerned with properties of eigenvalues of $\sigma^{(p)}_k$.
First, we prove a comparison theorem between eigenvalues of $\Lambda$ and eigenvalues of the Dirichlet-to-Neumann map $L$ defined by Raulot and Savo in~\cite{RS}. For any $\phi\in\Omega^p(\partial M)$ there exists a unique solution $\omega$ to the following problem (see Theorem 3.4.10 in~\cite{Schwarz}),

\begin{equation}
\left\{
   \begin{array}{rcl}
	\Delta\omega &=& 0,\\
	i_n\omega &=& 0,\\
	 i^* \omega &=& \phi.\\
   \end{array}
\right.
\end{equation}
Then $L(\phi)$ is defined to be equal to $i_nd\omega$. Moreover, $L$ is an elliptic pseudodifferential operator of order 1, so its spectrum is discrete and is denoted by
$$
0\leqslant \mu^{(p)}_1\leqslant \mu^{(p)}_2\leqslant\ldots \nearrow\infty.
$$
We also use notations $\tilde\mu_i^{(p)}$ and $\tilde\sigma_i^{(p)}$ to denote the $i$-th {\em non-zero} eigenvalue of the corresponding operator.

\begin{theorem} 
\label{CompTheorem}
Let $M$ be a compact orientable Riemannian manifold of dimension $n$ with boundary. Then for each $0\leqslant p\leqslant n-2$ ad all $k\in\mathbb{N}$ one has
$$
\tilde\mu_k^{(p)}\leqslant \tilde\sigma_{k}^{(p)}.
$$ 
\end{theorem}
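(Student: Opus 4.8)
The plan is to realise $\Lambda$, away from the part on which it vanishes, as the restriction of $L$ to an invariant subspace, and then to read off the inequality from the resulting inclusion of spectra. Throughout write $Q_\Lambda(\phi)=\ll\Lambda\phi,\phi\rr$ and $Q_L(\phi)=\ll L\phi,\phi\rr$ for the quadratic forms on $\Omega^p(\partial M)$. First I would record the two forms explicitly. For the Raulot--Savo extension $\omega$ of $\phi$ (so $\Delta\omega=0$, $i_n\omega=0$, $i^*\omega=\phi$) a double application of Green's formula~\eqref{Green}, using $\Delta\omega=0$ and $i_n\omega=0$, gives $Q_L(\phi)=\|d\omega\|^2_{L^2(M)}+\|\delta\omega\|^2_{L^2(M)}$; by Theorem~\ref{MainTheorem} the analogous expression for $\Lambda$ is $Q_\Lambda(\phi)=\|d\hat\phi\|^2_{L^2(M)}$ with $\hat\phi\in\mathcal L(\phi)$. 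Both are thus Dirichlet energies of harmonic extensions, the only difference being which boundary data and which gauge (coclosedness versus vanishing normal part) is imposed.

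The heart of the argument is to show that on the subspace $c\mathcal C^p(\partial M)$ the two operators coincide. Since $\partial M$ is closed one has the orthogonal splitting $\Omega^p(\partial M)=\mathcal E^p(\partial M)\oplus c\mathcal C^p(\partial M)$, and I would first invoke the result of~\cite{RS} that $L$ commutes with $d$ and $\delta$, so that $c\mathcal C^p(\partial M)$ is $L$-invariant. It then remains to prove that the Raulot--Savo extension $\omega$ of a \emph{coclosed} form $\phi$ is itself coclosed: granting this, $\omega$ solves~\eqref{DtN} and hence is an admissible representative in $\mathcal L(\phi)$, so $L\phi=i_nd\omega=\Lambda\phi$, and comparison of the two formulas above yields $Q_L=Q_\Lambda$ on $c\mathcal C^p(\partial M)$. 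To prove $\delta\omega=0$ I would pass to the Hodge dual $\eta=*\omega$: orientability makes $*$ available, Proposition~\ref{inbn} converts $i_n\omega=0$ into $i^*\eta=0$ and converts coclosedness of $\phi$ into closedness of $\bn\eta$, while $\Delta\eta=0$ and $\delta\omega=\pm *d\eta$ reduce the claim to $d\eta=0$. The latter concerns a harmonic form with vanishing tangential trace whose normal trace is closed, and it is precisely here that the degree restriction $p\leqslant n-2$ enters in an essential way.

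Granting the identification $\Lambda=L|_{c\mathcal C^p(\partial M)}$ together with the invariance of $c\mathcal C^p(\partial M)$, the conclusion is immediate. Every eigenform of $\Lambda$ lies in $c\mathcal C^p(\partial M)$ and is therefore a genuine eigenform of $L$ with the same eigenvalue, so the multiset of \emph{nonzero} eigenvalues of $L$ contains $\{\tilde\sigma^{(p)}_k\}$ as a sub-multiset; since the $k$-th smallest element of a multiset never exceeds the $k$-th smallest element of a sub-multiset, this gives $\tilde\mu^{(p)}_k\leqslant\tilde\sigma^{(p)}_k$ for every $k$. I expect the main obstacle to be the middle step, namely proving that coclosed boundary data have coclosed Raulot--Savo extensions: controlling the boundary term that Green's formula produces for $\eta$, and pinning down exactly where $p\leqslant n-2$ is used and why the identification breaks at $p=n-1$, is the delicate point, whereas the invariance of the Hodge splitting and the final comparison of the two spectra are comparatively soft.
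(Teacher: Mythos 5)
The decisive step in your argument --- that the Raulot--Savo extension of a coclosed form is itself coclosed, equivalently that $L$ and $\Lambda$ coincide as operators on $c\mathcal C^p(\partial M)$ --- is false in general, and this is exactly the difficulty the paper's proof is built to circumvent. The system $\Delta\omega=0$, $\delta\omega=0$, $i_n\omega=0$, $i^*\omega=\phi$ is overdetermined: Proposition~\ref{RSBS} shows it becomes solvable only after replacing $\phi$ by $\psi=\phi+\phi_0$ with a correction $\phi_0\in i^*\mathcal H^p(M)=\ker\Lambda$ ranging over an affine space of dimension $\dim\mathcal E\mathcal H^{p+1}_D(M)$, and there is no reason for $0$ to lie in that space when $\mathcal H^{p+1}_D(M)\neq 0$. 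Your proposed proof of $\delta\omega=0$ via $\eta=*\omega$ is circular: since $d\eta=\pm*\delta\omega$, the claim $d\eta=0$ \emph{is} the claim $\delta\omega=0$, and Proposition~\ref{indtod} cannot be invoked because it needs $i_nd\eta=0$ (equivalently $i^*\delta\omega=0$), which is not among the available boundary data; Green's formula only returns the identity $\|d\omega\|^2+\|\delta\omega\|^2=\ll L\phi,\phi\rr$ and forces nothing to vanish. Note also that if your identification were correct it would yield a spectral \emph{inclusion} (every nonzero eigenvalue of $\Lambda$ an eigenvalue of $L$ with at least the same multiplicity), far stronger than the stated inequality; the fact that this does happen on the ball (Section~\ref{ball}, where $\mathcal H^{p+1}_D(\mathbb B^{n+1})=0$) is what makes the claim tempting, but it is a special feature of trivial relative cohomology, not a general phenomenon. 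The subsidiary claim that $c\mathcal C^p(\partial M)$ is $L$-invariant because $L$ commutes with $d$ and $\delta$ is likewise not a result of~\cite{RS}, though it is not actually needed if the operator identity held.

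What the paper does instead is to accept the correction: for each $\Lambda$-eigenform $\phi_i$ it produces $\psi_i=\phi_i+(\text{element of }\ker\Lambda)$ admitting an extension $\hat\psi_i$ solving~\eqref{musigma}, hence admissible as a test form in the Raulot--Savo min--max (where extensions must satisfy $i_n\hat\psi=0$). One then has $d\hat\psi=d\hat\phi$, $\psi\perp\ker L$, and $\|\psi\|^2_{L^2}=\|\phi\|^2_{L^2}+\|\psi-\phi\|^2_{L^2}\geqslant\|\phi\|^2_{L^2}$, and the comparison of Rayleigh quotients goes through with an inequality rather than an equality --- the slack in that last step being precisely the footprint of the correction term your argument assumes away. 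To repair your approach you would have to either prove that the correction vanishes for your particular $M$ (true for the ball, false in general) or reorganize the argument around corrected test forms, at which point you have reproduced the paper's proof.
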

\begin{remark}
Let us note that $c\mathcal C^{n-1}(\partial M) = \mathcal H^{n-1}(\partial M)$ is one-dimensional and from the long exact cohomology sequence of pair $(M,\partial M)$
$$
\ldots\to H^{n-1}(M)\to H^{n-1}(\partial M)\to H^n (M,\partial M)\to H^n(M)\to 0,
$$
one sees that $I_{n-1} =1$, i.~e. $\Lambda\equiv 0$ on $\Omega^{n-1}(\partial M)$.
\end{remark}

Recently there have been several papers~\cite{K, KKK, RS2, RS, SY, SY2, YY, YY2} concerned with estimates for eigenvalues $\tilde\mu_k^{(p)}$. Most proofs of upper bounds in these papers can be modified to yield upper bounds for $\sigma_k^{(p)}$. In a sense, proofs of those bounds implicitly make use of Theorem~\ref{CompTheorem}. In our last theorem, we illustrate that by proving a generalisation of results of Yang and Yu from paper~\cite{YY}.

\begin{theorem}
\label{HPS}
Let $M$ be a compact oriented $n$-dimensional Riemannian manifold with nonempty boundary. Then for any two positive integers $m$ and $r$ and for any $p= 0,\ldots,n-2$, one has
\begin{equation}
\label{HPSineq}
\sigma^{(p)}_{m+I_p}\sigma^{(n-2-p)}_{r+I_{n-2-p}}\leqslant \lambda'^{(p)}_{I_p+m+r+b_{n-p-1}-1},
\end{equation}
where $\lambda'^{(p)}_k$ is the $k$-th eigenvalue of the Hodge-Laplace operator on the space $c\mathcal C^p(\partial M)$.
\end{theorem}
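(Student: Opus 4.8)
The plan is to derive~(\ref{HPSineq}) from a single scalar inequality that couples the two Dirichlet-to-Neumann problems through the Hodge star of $M$, and then to feed it into a Hersch--Payne--Schiffer dimension count. First I would make two reductions. Since $\Lambda$ has an $I_p$-dimensional kernel on $c\mathcal C^p(\partial M)$, the left-hand side of~(\ref{HPSineq}) equals $\tilde\sigma^{(p)}_m\tilde\sigma^{(n-2-p)}_r$; and on the closed oriented boundary Poincaré duality gives $b_{n-p-1}=b_p(\partial M)=\dim\mH^p(\partial M)$, which is precisely the multiplicity of the zero eigenvalue of the Hodge--Laplacian on $c\mathcal C^p(\partial M)$, so the right-hand side is the $(I_p+m+r-1)$-st \emph{nonzero} such eigenvalue. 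For $\chi\in c\mathcal C^p(\partial M)$ set $R_p(\chi)=\|d\lambda(\chi)\|^2_{L^2(M)}/\|\chi\|^2_{L^2(\partial M)}$; by the min-max formula of Theorem~\ref{MainTheorem}, if $\chi$ is orthogonal to the first $m+I_p-1$ eigenforms of $\Lambda$ in degree $p$, then $R_p(\chi)\ge\tilde\sigma^{(p)}_m$.

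The analytic core is the estimate
\[
R_p(\chi)\,R_{n-2-p}(\psi)\ \le\ \frac{\|d\chi\|^2_{L^2(\partial M)}}{\|\chi\|^2_{L^2(\partial M)}}
\]
for an appropriate coexact partner $\psi\in c\mathcal C^{n-2-p}(\partial M)$. To construct $\psi$ I put $\hat\chi=\lambda(\chi)$ and consider the harmonic field $\zeta:=*d\hat\chi$ of degree $n-1-p$; by Proposition~\ref{inbn} its boundary data are $i^*\zeta=\pm *_{\partial M}\Lambda\chi$ and $i_n\zeta=\pm *_{\partial M}\,d\chi$, where $*_{\partial M}$ is the Hodge star of $\partial M$. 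Choosing a coclosed harmonic primitive $\hat\psi$ with $d\hat\psi=\zeta$ and setting $\psi=i^*\hat\psi$ makes the Cauchy--Schwarz inequality on $M$ sharp: using $*\zeta=\pm d\hat\chi$ together with Green's formula~(\ref{Green}) and Stokes' theorem on $\partial M$,
\[
\|d\hat\chi\|^2_{L^2(M)}=\int_M d\hat\chi\wedge d\hat\psi=\pm\int_{\partial M} d\chi\wedge\psi .
\]
A second Cauchy--Schwarz, now on $\partial M$, bounds the last integral by $\|d\chi\|_{L^2(\partial M)}\,\|\psi\|_{L^2(\partial M)}$, so that $\|d\hat\chi\|^4_{L^2(M)}\le\|d\chi\|^2_{L^2(\partial M)}\|\psi\|^2_{L^2(\partial M)}$. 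Since $\|d\hat\psi\|_{L^2(M)}=\|d\hat\chi\|_{L^2(M)}$ we have $R_{n-2-p}(\psi)\|\psi\|^2=\|d\hat\chi\|^2_{L^2(M)}=R_p(\chi)\|\chi\|^2$, and dividing out yields the displayed estimate.

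Granting the estimate, I would finish by a variational count. Let $F$ be the span of the first $N:=I_p+m+r+b_{n-p-1}-1$ eigenforms of the Hodge--Laplacian on $c\mathcal C^p(\partial M)$, so that $\|d\chi\|^2_{L^2(\partial M)}/\|\chi\|^2_{L^2(\partial M)}\le\lambda'^{(p)}_N$ for all $\chi\in F$. On $F$ I impose (i) orthogonality to the first $m+I_p-1$ eigenforms of $\Lambda$ in degree $p$, which forces $R_p(\chi)\ge\tilde\sigma^{(p)}_m$, and (ii) orthogonality of $\psi(\chi)$ to the first $r-1$ nonzero eigenforms of $\Lambda$ in degree $n-2-p$. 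By Theorem~\ref{MainTheorem} the kernel of $\Lambda$ in that degree is $i^*\mH^{n-2-p}_N(M)\cap c\mathcal C^{n-2-p}$, whose elements are closed and hence orthogonal to the coexact form $\psi(\chi)$; thus (ii) already yields $R_{n-2-p}(\psi)\ge\tilde\sigma^{(n-2-p)}_r$ using only $r-1$ constraints. Conditions (i)--(ii) are $m+r+I_p-2$ linear constraints on $\chi$, and the degenerate locus $d\chi=0$ is exactly $\mH^p(\partial M)=\ker(\chi\mapsto\psi(\chi))$, of dimension $b_{n-p-1}$; since $\dim F$ exceeds $(m+r+I_p-2)+b_{n-p-1}$ by one, there is a nonzero $\chi\in F$ meeting (i)--(ii) with $d\chi\ne0$. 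For this $\chi$ the core estimate gives $\tilde\sigma^{(p)}_m\tilde\sigma^{(n-2-p)}_r\le R_p(\chi)R_{n-2-p}(\psi)\le\lambda'^{(p)}_N$, which is precisely~(\ref{HPSineq}).

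The hard part is the construction of $\psi$, that is, producing from $\zeta=*d\hat\chi$ a genuine partner in $c\mathcal C^{n-2-p}(\partial M)$: existence of a primitive $\hat\psi$ is the cohomological requirement that $\zeta$ be exact, while coclosedness of $i^*\hat\psi$ can be spoiled by the second fundamental form of $\partial M$. I expect to handle both by passing to the coexact subspaces, on which the Hodge star of $M$ sets up an isomorphism between the degree-$p$ and degree-$(n-2-p)$ boundary data modulo the spaces counted by $I_p$ and $b_{n-p-1}$, and by discarding the exact part of $i^*\hat\psi$, which is harmless because it pairs trivially with $d\chi$ in the identity above and because $\Lambda$ annihilates exact forms (Theorem~\ref{MainTheorem}). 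This is the step where orientability is used in an essential way, entering through both the Hodge star and Poincaré duality; the remaining verifications—the signs in the boundary identities coming from Proposition~\ref{inbn} and the linearity of constraint (ii) in $\chi$—are routine.
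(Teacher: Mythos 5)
Your overall strategy is the same as the paper's: build a conjugate harmonic form $\psi$ from $\zeta=*d\lambda(\chi)$, prove the Cauchy--Schwarz/Stokes estimate $\|d\lambda(\chi)\|^4_{L^2(M)}\leqslant\|d\chi\|^2_{L^2(\partial M)}\|\psi\|^2_{L^2(\partial M)}$, and finish with a min-max dimension count. The analytic core (your displayed identity and the two Cauchy--Schwarz steps) is correct and coincides with inequality~(\ref{HPSeq}) in the paper. The gap is in the bookkeeping of the constant $b_{n-p-1}$, and it is not cosmetic. In this theorem $b_{n-p-1}$ is the Betti number of $M$, i.e.\ $\dim\mH^{n-p-1}_N(M)$, \emph{not} $\dim\mH^p(\partial M)=b_p(\partial M)$ as you assert via Poincar\'e duality on $\partial M$; these differ already for the ball ($b_{n-1}(\mathbb{B}^n)=0$ while $b_0(\mathbb{S}^{n-1})=1$). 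Consequently your reading of the right-hand side as the $(I_p+m+r-1)$-st \emph{nonzero} eigenvalue of $\Delta_\partial$ on $c\mC^p(\partial M)$ is wrong, and more importantly the $b_{n-p-1}$ extra dimensions in your trial space $F$ are spent on the wrong condition.

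Concretely: the step you flag as ``the hard part'' --- existence of a primitive of $\zeta$ --- is not a technicality to be absorbed into an isomorphism of coexact subspaces; it is exactly $b_{n-p-1}=\dim\mH^{n-p-1}_N(M)$ linear constraints on $\chi$, namely $*d\lambda(\chi)\perp\mH^{n-p-1}_N(M)$, and these must be counted alongside (i) and (ii). Your count instead uses the $b_{n-p-1}$ surplus to avoid the locus $\{d\chi=0\}$, which is a different subspace of a different dimension, and which in any case need not be excluded by hand: if $\chi\neq0$ satisfies (i) then $R_p(\chi)\geqslant\tilde\sigma^{(p)}_1>0$, so $d\chi=0$ is already incompatible with the core estimate. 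Once you replace ``avoid $d\chi=0$'' by ``impose $*d\lambda(\chi)\perp\mH^{n-p-1}_N(M)$'', the count closes ($\dim F$ exceeds the total number of constraints by one) and your argument becomes the paper's proof; without that replacement the map $\chi\mapsto\psi(\chi)$ is simply undefined on part of your constrained subspace and the argument does not go through. A smaller point in the same vein: your claim that $\psi$ is coexact (hence orthogonal to $\ker\Lambda$ in degree $n-2-p$) is unjustified --- $\psi$ is only coclosed, and its orthogonality to $i^*\mH^{n-2-p}(M)$ has to be arranged by the choice of the harmonic-field component $\gamma_0$ in the Hodge decomposition of the primitive, as the paper does.
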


\begin{remark} Theorem of Yang and Yu is obtained from the theorem above by setting $p=0$ and applying Theorem~\ref{CompTheorem} to the left hand side. For details, see Section~\ref{YYHPS}.\end{remark}

\begin{remark}
\label{sharp}
It is shown in Section~\ref{ball} that inequality~(\ref{HPSineq}) is sharp on the Euclidean ball at least for $m,r = 1$. In fact, it is sharp for a wider range of values of $m,r$, see Section~\ref{ball} for details.
\end{remark}

\subsection{Discussion}
\label{discussion}
In this section we discuss a particular case of even $n$ and $p = \frac{n}{2}-1$.

\begin{proposition}
Let $n = 2p+2$ and consider operator $\Lambda$ on the space $\Omega^p(\partial M)$. Then eigenvalues $\sigma^{(p)}_k$ are invariant under conformal changes of metric with conformal factor identically equal $1$ on the boundary. 
\end{proposition}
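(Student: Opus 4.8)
The plan is to read the result directly off the variational characterisation in Theorem~\ref{MainTheorem}, after isolating the two places where the metric actually enters. Write the conformal change as $\tilde g=e^{2f}g$ with $f\in C^\infty(M)$ and $f|_{\partial M}\equiv 0$. Two elementary scaling facts drive everything. First, on an $N$-dimensional manifold the pointwise inner product of two $q$-forms rescales by $e^{-2qf}$ while the volume element rescales by $e^{Nf}$, so the integrand $\langle\alpha,\beta\rangle\,dV$ picks up the factor $e^{(N-2q)f}$; in the middle degree $q=N/2$ this factor is identically $1$, i.e. the $L^2$ inner product on middle-degree forms is conformally invariant for \emph{any} conformal change. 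Second, since $f$ vanishes on $\partial M$ the induced boundary metric $i^*\tilde g=i^*g$ is unchanged, and with it the boundary Hodge star, the boundary codifferential, the space $c\mathcal C^p(\partial M)$, and the boundary $L^2$ inner product.

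Now I would specialise to $N=n=2p+2$, so that a $p$-form $\hat\phi$ on $M$ has differential $d\hat\phi$ of degree $p+1=\tfrac n2$, which is exactly the middle degree on $M$. In the min--max
\[
\sigma_k^{(p)}=\max_E\min_{\phi\perp E;\,i^*\hat\phi=\phi}\frac{||d\hat\phi||^2_{L^2(M)}}{||\phi||^2_{L^2(\partial M)}},
\]
the operators $d$ and $i^*$ are metric-independent, so the admissible set of pairs $(\phi,\hat\phi)$ with $i^*\hat\phi=\phi$ does not move. By the first scaling fact the numerator $||d\hat\phi||^2_{L^2(M)}$ is unchanged, and by the second the denominator $||\phi||^2_{L^2(\partial M)}$, the subspaces $E\subset c\mathcal C^p(\partial M)$, and the orthogonality relation $\phi\perp E$ are all unchanged. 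Hence the entire variational problem, and therefore each $\sigma_k^{(p)}$, is conformally invariant.

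The one point that genuinely needs justification is that the inner minimisation is over \emph{all} extensions $\hat\phi\in H^1\Omega^p(M)$ with $i^*\hat\phi=\phi$, with no metric-dependent side constraint such as coclosedness on $M$ (which is not conformally invariant). I would secure this through the Dirichlet-principle identity
\[
\ll\Lambda\phi,\phi\rr=||d\lambda(\phi)||^2_{L^2(M)}=\min\{||d\hat\phi||^2_{L^2(M)}:\ i^*\hat\phi=\phi\}.
\]
The first equality follows from Green's formula~\eqref{Green} with $\alpha=\lambda(\phi)$ and $\beta=d\lambda(\phi)$, the bulk term vanishing because $\delta d\lambda(\phi)=\Delta\lambda(\phi)-d\delta\lambda(\phi)=0$. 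For the second, writing $\hat\phi=\lambda(\phi)+\eta$ with $\eta\in\Omega^p_D(M)$ and integrating by parts gives $\ll d\lambda(\phi),d\eta\rr=\ll\delta d\lambda(\phi),\eta\rr=0$, the boundary term dropping out since $i^*\eta=0$; hence $||d\hat\phi||^2=||d\lambda(\phi)||^2+||d\eta||^2$ is minimised at $\eta=0$ (closed Dirichlet forms add nothing to the energy). This identity replaces the metric-dependent definition of $\Lambda$ by a manifestly conformal quantity, and it is really the only obstacle: once it is in place, the conclusion is a direct consequence of the single numerical coincidence $p+1=\tfrac n2$.
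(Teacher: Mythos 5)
Your proof is correct and follows essentially the same route as the paper, which simply observes that the Rayleigh quotient $\|d\hat\phi\|^2_{L^2(M)}/\|\phi\|^2_{L^2(\partial M)}$ in the min--max characterisation of Theorem~\ref{MainTheorem} is invariant under such conformal changes; your middle-degree weight computation and the Dirichlet-principle identity (the latter being exactly Proposition~\ref{minL}, already established in the paper) just make explicit the details the paper leaves to the reader.
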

\begin{proof}
The Rayleigh quotient 
$$
\frac{||d\hat\phi||_{L^2(M)}}{||\phi||_{L^2(\partial M)}}
$$
is invariant under conformal changes of metric described in the statement. 
\end{proof}
The case $n=2$, $p=0$ corresponds to Steklov eigenvalues on surfaces where conformal invariance is well-known.
Moreover, under the same relation between $n$ and $p$ the left hand side of the bound in Theorem~\ref{HPS} only contains the eigenvalues $\sigma^{(p)}$. In particular, setting $m=r$ yields the following theorem.
\begin{theorem}
\label{HPS2}
Let $M$ be a compact oriented $(2p+2)$-dimensional Riemannian manifold with nonempty boundary. Then for any $m>0$ one has the following inequality,
\begin{equation}
\label{HPSineq2}
\left(\sigma^{(p)}_{m+I_{p}}\right)^2\leqslant \lambda'^{(p)}_{I_{p} + b_{p+1} + 2m-1}
\end{equation}
\end{theorem}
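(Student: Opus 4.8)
The plan is to obtain Theorem~\ref{HPS2} as a direct specialization of Theorem~\ref{HPS}, exactly along the lines indicated by the remark preceding the statement. First I would check that the hypothesis of Theorem~\ref{HPS} is met under the standing assumption $n = 2p+2$: the inequality~(\ref{HPSineq}) requires $0\leqslant p\leqslant n-2$, and substituting $n = 2p+2$ gives $n-2 = 2p\geqslant p$ for every $p\geqslant 0$, so~(\ref{HPSineq}) is indeed available for all relevant $p$.

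The key observation is the arithmetic identity $n-2-p = p$, which holds precisely when $n = 2p+2$. Consequently the complementary form degree appearing in the second factor on the left-hand side of~(\ref{HPSineq}) coincides with $p$ itself, so that $\sigma^{(n-2-p)}_{r+I_{n-2-p}} = \sigma^{(p)}_{r+I_p}$ and in particular $I_{n-2-p} = I_p$. Setting $r = m$ then collapses the product on the left into a square, $\sigma^{(p)}_{m+I_p}\,\sigma^{(p)}_{m+I_p} = \bigl(\sigma^{(p)}_{m+I_p}\bigr)^2$, which is the left-hand side of~(\ref{HPSineq2}).

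It remains to simplify the index on the right-hand side. Here the relevant identity is $n-p-1 = p+1$, again a consequence of $n = 2p+2$, whence $b_{n-p-1} = b_{p+1}$. Substituting $r = m$ into the subscript $I_p + m + r + b_{n-p-1} - 1$ yields $I_p + 2m + b_{p+1} - 1 = I_p + b_{p+1} + 2m - 1$, which is exactly the subscript in~(\ref{HPSineq2}). Combining these substitutions transforms~(\ref{HPSineq}) into~(\ref{HPSineq2}).

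There is essentially no analytic content to overcome: the statement is pure bookkeeping once Theorem~\ref{HPS} is in hand. The only point that warrants a moment's care is the internal consistency of the $p\leftrightarrow n-2-p$ symmetry of~(\ref{HPSineq}) in the degenerate case $p = n-2-p$. Since both factors then carry the same degree $p$ and the same shift $I_p$, and the right-hand side of~(\ref{HPSineq}) already refers to $\lambda'^{(p)}$, no ambiguity arises and the specialization is unambiguous; this is precisely the feature highlighted in the discussion, namely that for $n = 2p+2$ the bound involves only the eigenvalues $\sigma^{(p)}$.
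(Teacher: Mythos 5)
Your proposal is correct and coincides with the paper's own derivation: Theorem~\ref{HPS2} is obtained exactly by specializing Theorem~\ref{HPS} to $n=2p+2$ (so that $n-2-p=p$ and $n-p-1=p+1$) and setting $r=m$. The bookkeeping with the indices $I_{n-2-p}=I_p$ and $b_{n-p-1}=b_{p+1}$ is precisely what the paper intends.
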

The case $n=2$, $p=0$ corresponds to a particular case of Hersch-Payne-Schiffer inequality, which is sharp on the disk for all $m$, see~\cite{GP}.

From explicit computations of $\Lambda$ on the unit ball given in Section~\ref{ball} one can see that inequality~(\ref{HPSineq2}) is sharp on the ball for $m\leqslant \frac{1}{2}{2p+2 \choose p+1}$. It will be interesting to see if unit ball is the unique manifold with this property.
\begin{conjecture}
Suppose that for manifold $M$ inequality~(\ref{HPSineq2}) becomes an equality for $m\leqslant \frac{1}{2}{2p+2 \choose p+1}$. Then $M$ is a Euclidean ball. 
\end{conjecture}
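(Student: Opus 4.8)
The plan is to run the equality discussion backwards through the proof of Theorem~\ref{HPS2}, extract the rigid structure it forces on the extremal harmonic fields, and show that this structure can only occur on a Euclidean ball. Throughout I would work in the conformal gauge afforded by the Proposition preceding Theorem~\ref{HPS2}: since in the self-dual range $n=2p+2$ the eigenvalues $\sigma^{(p)}_k$ are unchanged by conformal deformations that are trivial on $\partial M$, equality in~(\ref{HPSineq2}) is a conformally invariant hypothesis, and I am free to normalise the metric in the interior before attempting to pin down the geometry.

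First I would isolate the two estimates that are chained together to produce~(\ref{HPSineq2}). The proof of Theorem~\ref{HPS} bounds a product of $\Lambda$-Rayleigh quotients by a single Hodge--Laplace eigenvalue on $c\mathcal C^p(\partial M)$, and in the self-dual case $p=n-2-p$ the two factors coincide, so the decisive analytic step is a Cauchy--Schwarz/wedge estimate of the shape $\int_M d\hat\phi\wedge d\hat\psi \leqslant \|d\hat\phi\|_{L^2(M)}\|d\hat\psi\|_{L^2(M)}$ applied to extensions of boundary Laplace eigenforms. Equality in~(\ref{HPSineq2}) for a given $m$ forces every link to be tight: the test forms must be genuine eigenforms of $\Lambda|_{c\mathcal C^p}$ with eigenvalue $\sigma^{(p)}_{m+I_p}$ (so the min-max formula of Theorem~\ref{MainTheorem} is saturated), they must simultaneously be eigenforms of the boundary Hodge Laplacian, and the wedge estimate must be an equality. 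This last condition is the geometric heart, since $\int_M d\hat\phi\wedge d\hat\psi = \pm\ll d\hat\phi, *\,d\hat\psi\rr$ is extremal on the diagonal exactly when the middle-degree harmonic field $\eta=d\lambda(\phi)$ is self-dual (equivalently, when the span of the $d\hat\phi$ is maximally isotropic for the intersection pairing on $\Omega^{p+1}(M)$).

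Next I would count. Requiring equality for every $m\leqslant \tfrac12\binom{2p+2}{p+1}$ produces $\tfrac12\binom{2p+2}{p+1}$ independent harmonic fields $\eta_j=d\hat\phi_j\in\mathcal H^{p+1}(M)$, all self-dual, with $i_n\eta_j=\Lambda\phi_j=\sigma^{(p)}_{m_j+I_p}\phi_j$ linearly independent on $\partial M$. Since $\tfrac12\binom{2p+2}{p+1}$ is exactly the pointwise rank of the self-dual bundle $\Lambda^{p+1}_+T^*M$, these fields must frame $\Lambda^{p+1}_+T^*M$ at every interior point. I would then argue, via a Weitzenböck/Bochner identity, that a closed, coclosed, self-dual family of this maximal size forces the curvature operator restricted to $\Lambda^{p+1}_+$ to vanish, and that the primitives of the $\eta_j$ behave like the Euclidean coordinate $p$-forms. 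Assembling these primitives into a single map and using Green's formula~(\ref{Green}) together with the boundary relation $i_nd\hat\phi=\sigma\,i^*\hat\phi$, I would identify $M$, after the interior normalisation, with the flat ball whose boundary sphere carries precisely these eigenforms as its lowest Hodge--Laplace eigenforms, matching the explicit computation of Section~\ref{ball}.

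The hard part will be the passage from a maximal self-dual harmonic frame to the Euclidean ball itself. The abundance of closed, coclosed, self-dual middle forms is an overdetermined condition, and converting it into a pointwise geometric identity requires a Bochner argument controlling the full curvature rather than merely its action on $\Lambda^{p+1}_+$, together with a boundary-rigidity step forcing $\partial M$ to be a round sphere and excluding other space forms. A secondary difficulty is bookkeeping the multiplicities and the index shift $b_{p+1}+I_p$ when equality is assumed only up to $m=\tfrac12\binom{2p+2}{p+1}$ rather than for all $m$, since the self-duality conclusion must be shown to survive across the possibly degenerate eigenvalues in that range, and I expect the conformal gauge will have to be promoted from a mere conformal identification to a genuine isometry at the very end.
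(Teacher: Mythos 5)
This statement is one of the paper's open conjectures: the author offers no proof, only the observation (via the explicit computation on $\mathbb{B}^{2p+2}$ in Section~\ref{ball}) that equality holds on the ball for $m\leqslant\frac12\binom{2p+2}{p+1}$. So there is no argument in the paper to compare yours against, and your text is a research programme rather than a proof; it should be judged on whether its steps could be carried out. Several of them could not, as written. First, the Cauchy--Schwarz step~(\ref{HPSeq}) in the proof of Theorem~\ref{HPS} is an $L^2(\partial M)$ inequality between $\psi$ and $i_nd\lambda(\psi)$; equality forces $\psi$ to be a $\Lambda$-eigenform, but it does not force $d\lambda(\phi)$ to be self-dual. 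In the paper's construction the conjugate relation is $*d\lambda(\phi)=d\lambda(\psi)$ with $\psi$ the \emph{dual} form, and nothing in the equality analysis identifies $\psi$ with $\phi$; your "maximally isotropic/self-dual" reading of the extremal case is asserted, not derived. Second, even granting $\tfrac12\binom{2p+2}{p+1}$ linearly independent self-dual harmonic fields, linear independence in $\Omega^{p+1}(M)$ does not give pointwise linear independence, so they need not frame $\Lambda^{p+1}_+T^*M$ anywhere, let alone everywhere. Third, the Bochner step fails without a sign on the curvature term: harmonicity plus a maximal frame does not yield parallel forms or vanishing curvature on $\Lambda^{p+1}_+$ in the absence of a curvature hypothesis.

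There is also a structural obstruction your own first paragraph sets up but does not confront: by the Proposition in Section~\ref{discussion}, both sides of~(\ref{HPSineq2}) are unchanged under conformal deformations of $g$ that are trivial on $\partial M$ (the right-hand side depends only on $g|_{\partial M}$). Hence if the round ball achieves equality, so does every conformal deformation of it with factor $1$ on the boundary, and no argument can conclude that $M$ is \emph{isometric} to the Euclidean ball from this hypothesis. At best one could hope to prove uniqueness up to such a conformal change (or reinterpret the conjecture accordingly); any viable attack has to acknowledge this from the outset rather than hoping to "promote the conformal gauge to a genuine isometry at the very end."
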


Moreover, it seems that using methods similar to the ones developed in~\cite{GP}, it is possible to show that the inequality in Theorem~\ref{HPS2} is sharp on the ball for all values of $m$. We formulate it as a conjecture.
\begin{conjecture}
Inequality~(\ref{HPSineq2}) is sharp for all values of $m$. To be more precise, for any $m$ and $p$ there exists a sequence $M_k$ of orientable Riemannian manifolds with boundary such that the left hand of inequality~(\ref{HPSineq2}) tends to the right hand side as $k\to\infty$. Moreover, manifolds $M_k$ can be chosen to be a collection of $N = N(m,p)$ euclidean balls of equal radii glued together in the right way.
\end{conjecture}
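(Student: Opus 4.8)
The plan is to follow the degeneration strategy of Girouard and Polterovich~\cite{GP}, realising the conjectured extremisers as families of Euclidean balls joined by thin necks and then pinching those necks to degenerate the metric. The problem splits into two ingredients: the sharpness of~(\ref{HPSineq2}) on a single ball, already established in Section~\ref{ball}, and a spectral convergence statement for the coclosed Steklov problem under neck pinching. The governing heuristic is that~(\ref{HPSineq2}) is saturated on an individual ball only for $m\leqslant\frac12\binom{2p+2}{p+1}$, so to reach an arbitrary index $m$ one must \emph{stack} several blocks: gluing $N=N(m,p)$ balls together and pinching lets the $(m+I_p)$-th eigenform of $\Lambda$ on the glued manifold be modelled, in the limit, by an eigenform concentrated on a single block and sitting in that block's sharp range.

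Concretely, I would proceed as follows. Fix the building block to be the unit ball $B^{2p+2}$, for which Section~\ref{ball} supplies the exact spectrum of $\Lambda$ on $c\mC^p(\partial B)$, the exact Hodge spectrum on $c\mC^p(S^{2p+1})$, and the equality case of~(\ref{HPSineq2}) in the stated range. Next fix a smooth orientable manifold $W$ built from $N=N(m,p)$ disjoint copies of $B^{2p+2}$ by joining them, in the interior and away from the boundary, with thin necks, so that $\partial W=\bigsqcup_N S^{2p+1}$ and the integers $I_p(W)$ and $b_{p+1}(\partial W)$ are determined by the gluing pattern. Equip $W$ with a family of metrics $g_k$ that pinch the necks as $k\to\infty$ while keeping the boundary metric fixed, and set $M_k=(W,g_k)$; since $I_p(W)$ and $b_{p+1}(\partial W)$ are topological they stay constant along the family, and $\{\lambda'^{(p)}_j(\partial M_k)\}$ coincides with the Hodge spectrum of $\bigsqcup_N S^{2p+1}$ for all $k$. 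The key analytic step is to show that the coclosed Steklov spectrum $\{\sigma^{(p)}_j(M_k)\}$ converges, eigenvalue by eigenvalue, to the spectrum of the disjoint union $\bigsqcup_N B^{2p+2}$. Finally, with the min-max formula of Theorem~\ref{MainTheorem} in hand one matches indices: the sorted disjoint-union spectrum places $\sigma^{(p)}_{m+I_p(W)}$ at an eigenvalue coming from a single block in its sharp range, and comparing the shift $I_p(W)+b_{p+1}(\partial W)+2m-1$ with the interleaved Laplace spectrum of $\bigsqcup_N S^{2p+1}$ shows that the two sides of~(\ref{HPSineq2}) agree in the limit. Optimising $N$ and the gluing pattern against $m$ then yields the required sequence $M_k$.

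The main obstacle is the spectral convergence in the third step. For the scalar case $p=0$ this is the now-standard Dirichlet--Neumann bracketing argument of~\cite{GP}, but for coclosed $p$-forms two new difficulties appear. First, the constraint $\delta\omega=0$ and the harmonic-field boundary conditions do not localise as cleanly as the scalar quadratic form, so the cut-and-paste of trial forms across a pinching neck must be carried out within $c\mC^p$ and estimated in the Rayleigh quotient $\|d\hat\phi\|^2_{L^2(M)}/\|\phi\|^2_{L^2(\partial M)}$ of Theorem~\ref{MainTheorem}; a quadratic-form comparison adapted to coclosed forms, rather than to scalar harmonic extensions, will be required. Second, and more seriously, the topological terms $I_p$ and $b_{p+1}$ enter the index on the right-hand side of~(\ref{HPSineq2}), and these are exactly the quantities most sensitive to the presence of the necks. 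One must therefore compute $I_p(W)=\dim\mathrm{im}\{i^*\colon H^p(W)\to H^p(\partial W)\}$ explicitly for the glued manifold, check that the kernel of $\Lambda|_{c\mC^p}$ of this dimension is compatible with the per-block matching, and verify that pinching the necks creates no spurious small eigenvalues beyond those accounted for by $I_p$ and $b_{p+1}$. Getting this topological bookkeeping to close, rather than the ball computation or the gluing construction itself, is where the real work lies.
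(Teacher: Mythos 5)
The statement you are addressing is not proved in the paper at all: it is stated as an open conjecture, with the author only remarking that ``it seems that using methods similar to the ones developed in~\cite{GP}, it is possible'' to establish sharpness for all $m$. So there is no proof in the paper to compare against, and your text should be judged on whether it itself constitutes a proof. It does not. What you have written is a program --- and, to your credit, essentially the program the author gestures at (glue $N(m,p)$ Euclidean balls by thin necks, pinch, and use the exact ball spectrum from Section~\ref{ball} together with the min-max formula of Theorem~\ref{MainTheorem}) --- but the two load-bearing steps are left entirely open, and you say so yourself. First, the spectral convergence of $\sigma^{(p)}_j(M_k)$ to the spectrum of the disjoint union of balls under neck pinching is asserted, not proved; for the scalar case this is the Dirichlet--Neumann bracketing of~\cite{GP}, but for the operator $\Lambda$ on $c\mC^p(\partial M)$ no such bracketing is available in the paper, the quadratic form $\|d\hat\phi\|^2_{L^2(M)}$ is constrained to coclosed harmonic extensions that do not localise under cut-and-paste, and you give no substitute estimate. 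Second, the index bookkeeping --- computing $I_p(W)$ and $b_{p+1}(\partial W)$ for the glued manifold, verifying that no spurious small eigenvalues appear beyond those absorbed by these shifts, and checking that the $(m+I_p)$-th eigenform lands in a single block's sharp range --- is exactly the place where the argument could fail, and you explicitly defer it (``where the real work lies'').

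Concretely, then: this is a reasonable and well-motivated research plan for attacking the conjecture, and it correctly identifies both the building blocks the paper provides (Theorem~\ref{eigenspace}, the min-max principle, conformal invariance in the critical dimension $n=2p+2$) and the two genuine obstacles. But neither obstacle is overcome, so the conjecture remains unproved by your text. If you want to turn this into a proof, the first thing to supply is a quantitative comparison of the coclosed Rayleigh quotient on the glued manifold with that on the disjoint union --- for instance, a construction of trial coclosed extensions supported away from the necks with controlled $\|d\hat\phi\|_{L^2}$ error, together with a matching lower bound --- and an explicit Mayer--Vietoris computation of $\im\{i^*\colon H^p(W)\to H^p(\partial W)\}$ for your gluing pattern.
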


Previous remarks indicate that eigenvalues $\sigma^{(p)}$ for $(2p+2)$-dimensional manifold $M$ have a lot of features similar to Steklov eigenvalues for surfaces. There is a vast literature devoted to the geometric optimisation problem for Steklov eigenvalues, see e.g.~\cite{GP,GPsurvey,FS1,FS2,K}. Here we propose a similar problem for eigenvalues $\sigma^{(p)}$. Fix an oriented closed Riemannian manifold $(\Sigma,h)$ of dimension $2p+1$. Assume that orientable bordism class of $\Sigma$ is trivial,~i.~e. there exists an orientable manifold $W$ such that $\partial W = \Sigma$. Denote by $[\Sigma,h]_m$ the set of all orientable Riemannian manifolds $(W,g)$ such that $\partial W = \Sigma$, $g|_{\partial W} = h$ and $b_{p+1} = m$. According to Theorem~\ref{HPS2}, for any element of $[\Sigma, h]_m$ the eigenvalue $\sigma^{(p)}_k$ is bounded from above by a quantity depending only on $(\Sigma, h)$ and $m$. For fixed $k,m$ it would be interesting to understand the quantity 
$$
\sup\limits_{[\Sigma,h]_m}\sigma^{(p)}_k.
$$ 
As we pointed out above, for $(\Sigma,h) = (\mathbb{S}^{2p+1},g_{can})$ and $m=0$, Theorem~\ref{HPS2} yields a sharp bound for the first several values of $k$ and the supremum is attained for $(W,g) = (\mathbb{B}^{2p+2},g_{can})$.

\subsection{Organisation of the paper}
The paper is organised in the following way. In Section~\ref{prelim} we show preliminary properties of $\Lambda$ which were essentially demonstrated in~\cite{BS}. In Section~\ref{compactness} we prove that $\Lambda$ is an operator with compact resolvent and Section~\ref{minimax} contains the corresponding variational formulae. Sections~\ref{CT} and~\ref{YYHPS} are devoted to proofs of Theorem~\ref{CompTheorem} and Theorem~\ref{HPS} respectively. Finally, in Section~\ref{ball} we compute the eigenbasis of $\Lambda$ in the case of the unit ball in $\mathbb{R}^{n+1}$.

\section{Preliminaries}
\label{prelim}

\subsection{Hodge-Morrey-Friedrichs decomposition} The cornerstone of our considerations is the Hodge decomposition for manifolds with boundary. 
First, let us record an elementary result that can be proved by computation in local coordinates. 

\begin{proposition} 
\label{inbn}
One has the following equalities, 
$$
\bn\delta = \delta\bn;\qquad \bt d = d\bt;\qquad *\bn = \bt*.
$$
Equivalently,
$$
i_n\delta = \pm\delta i_n;\qquad i^* d = i^* d;\qquad *i_n = \pm i^*
$$
\end{proposition}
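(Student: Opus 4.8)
The plan is to reduce everything to a local computation in a collar neighbourhood of $\partial M$. I would fix boundary normal coordinates $(x^1,\dots,x^{n-1},x^n)$, in which $x^n\geqslant 0$ is the distance to the boundary, $\partial M = \{x^n = 0\}$, the metric is $g = (dx^n)^2 + h_{ij}(x)\,dx^i dx^j$, and the outer unit normal is $-\partial_{x^n}$, so that $dn = -dx^n$. In these coordinates every $\omega\in\Omega^p(M)$ splits uniquely as $\omega = \alpha + dx^n\wedge\beta$ with $\alpha,\beta$ free of $dx^n$, and one reads off $\bt\omega = i^*\alpha = \alpha|_{\partial M}$, $i_n\omega = \pm\beta$, and $\bn\omega = dx^n\wedge\beta|_{\partial M}$. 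The whole proof then amounts to tracking how $d$, $*$ and $\delta$ mix the two summands and extracting the tangential and normal parts.

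For $\bt d = d\bt$ nothing beyond naturality of the pullback is needed: $\bt = i^*$ commutes with the exterior derivative, so $\bt\,d\omega = i^*d\omega = d\,i^*\omega = d\,\bt\omega$, the last $d$ being the exterior derivative of $\partial M$. Concretely, writing $d = d' + dx^n\wedge\partial_n$ with $d'$ the tangential differential, the $dx^n$-free part of $d\omega$ is $d'\alpha$, whose restriction to $x^n = 0$ is $d_{\partial M}(\bt\omega)$.

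For the Hodge-star identity I would compute $*$ in the adapted orthonormal coframe, where $*_M\alpha = \pm\,dx^n\wedge *_{\partial}\alpha$ for a tangential $p$-form $\alpha$ and $*_M(dx^n\wedge\beta) = \pm\,*_{\partial}\beta$, with $*_{\partial}$ the Hodge star of the boundary slices. Hence the tangential part of $*\omega$ and $*\bn\omega = *(dx^n\wedge\beta)$ both equal $\pm *_{\partial}\beta|_{\partial M}$ with the same sign, giving $*\bn = \bt*$; the same computation yields the companion relation $*\bt = \pm\,\bn*$. It is exactly the extra $dn\wedge(\cdot)$ distinguishing $\bn\omega$ from $i_n\omega$ that produces the degree- and dimension-dependent signs in the equivalent formulation written with $i_n$ and $i^*$.

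Finally, rather than recomputing, I would deduce $\bn\delta = \delta\bn$ from the two identities above together with $\delta = \pm\,{*}d{*}$: chaining $\bn{*} = \pm\,{*}\bt$, then $\bt d = d\bt$, then $\bt{*} = {*}\bn$, gives $\bn\delta = \pm\,\bn{*}d{*} = \pm\,{*}\bt d{*} = \pm\,{*}d\,\bt{*} = \pm\,{*}d{*}\bn = \delta\bn$. The structural content sits entirely in the product form of the metric near the boundary, which decouples each operator into its tangential and normal components; I expect the only genuine obstacle to be the careful bookkeeping of the orientation- and degree-dependent signs, which is also what forces the $\pm$ into the $i_n,i^*$ version of the statement while leaving the $\bt,\bn$ version sign-free.
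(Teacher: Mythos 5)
Your proof is correct and follows exactly the route the paper indicates: the paper offers no written argument, merely noting that the proposition ``can be proved by computation in local coordinates,'' and your boundary-normal-coordinate computation (splitting $\omega = \alpha + dx^n\wedge\beta$, using naturality of $i^*$ for $\bt d = d\bt$, the product form of $*$ in the adapted coframe for $*\bn = \bt *$, and $\delta = \pm{*}d{*}$ for the remaining identity) supplies precisely those omitted details. The only loose end is the sign bookkeeping in your final chain, which does close up to give the sign-free identity $\bn\delta = \delta\bn$ (and which the paper's own Remark declares irrelevant for its later use in any case).
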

\begin{remark}
It is possible to calculate the exact signs in the expressions above which will depend on the degree of the form and dimension of the manifold. However, the signs are not needed in the following and would make the exposition more cumbersome.
\end{remark}

This proposition together with Green's formula~(\ref{Green}) clarifies the following theorem.

\begin{theorem}[Hodge-Morrey-Friedrichs decomposition, see~e.g.~\cite{Schwarz}]
Let $M$ be a compact orientable manifold with non-empty boundary. Then the space of differential $p$-forms on $M$ admits the following decomposition into a direct sum
$$
\Omega^p(M) = d\Omega^{p-1}_D(M)\oplus \delta\Omega^{p+1}_N(M)\oplus \mH^p(M).
$$
Note that boundary conditions are taken {\bf before} applying the operator so that $d\Omega^{p-1}_D(M) = \{\omega\in\Omega^p(M)|\, \omega = d\alpha,\,i^*\alpha = 0\}$.
The space of harmonic fields $\mH^p(M)$ can be further decomposed in two different ways,
$$
\mH^p(M) = \mathcal{EH}^p(M)\oplus\mH^p_N(M)
$$
or
$$
\mH^p(M) = c\mathcal{EH}^p(M)\oplus\mH^p_D(M).
$$
Moreover, $\mH^p_N(M)$ is finite dimensional and constitutes the concrete realisation of absolute de Rham cohomology group $H^p(M,\mathbb{R})$,~i.~e. $\mH^p_N(M)\simeq H^p(M,\mathbb{R})$. Similarly, $\mH^p_D(M)$ is the concrete realisation of relative cohomology group $H^p(M,\partial M,\mathbb{R})$.
\end{theorem}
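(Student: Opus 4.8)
The statement is classical (see~\cite{Schwarz}), so I will recall the structure of the argument and isolate the one genuinely analytic step. The plan is to establish the three-term splitting first, then the two Friedrichs refinements of $\mH^p(M)$, and finally the two de Rham identifications; the only elementary ingredient, used repeatedly, is Green's formula~(\ref{Green}). I would begin by verifying that the three summands are mutually $\ll\cdot,\cdot\rr$-orthogonal. For a harmonic field $h$ and an exact form $d\alpha$ with $i^*\alpha=0$,
$$
\ll d\alpha,h\rr=\ll\alpha,\delta h\rr+\int_{\partial M}i^*\alpha\wedge *\bn h=0,
$$
since $\delta h=0$ and $i^*\alpha=0$; orthogonality of $\mH^p(M)$ to $\delta\Omega^{p+1}_N(M)$ is dual, using $dh=0$ and the Neumann condition, while $\ll d\alpha,\delta\eta\rr=0$ follows from $\delta\delta=0$ together with $i^*\alpha=0$. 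Hence the sum is direct.

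The analytic heart, and the step I expect to be the main obstacle, is completeness: that every $\omega\in\Omega^p(M)$ lies in the sum. I would work in $L^2$ and identify the orthogonal complement of the closure of $d\Omega^{p-1}_D(M)\oplus\delta\Omega^{p+1}_N(M)$. Testing $\ll\omega,d\alpha\rr=0$ against compactly supported interior $\alpha$ forces $\delta\omega=0$ weakly, and $\ll\omega,\delta\eta\rr=0$ forces $d\omega=0$ weakly, so the complement consists of weak harmonic fields. Two standard inputs make this rigorous: the Gaffney inequality $\|\omega\|_{H^1}^2\leqslant C(\|d\omega\|^2+\|\delta\omega\|^2+\|\omega\|^2)$, valid on forms with $\bt\omega=0$ or $\bn\omega=0$, which yields closedness of the relevant ranges; and elliptic regularity for the Hodge--de Rham operator $d+\delta$, which promotes weak harmonic fields to smooth ones and shows the three components of a smooth form are themselves smooth.

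The two refinements of $\mH^p(M)$ and the finiteness statement come next. Since elements of $\mH^p_N(M)$ satisfy $d\omega=\delta\omega=0$ and $\bn\omega=0$, the Gaffney estimate bounds their $H^1$-norm by their $L^2$-norm, so Rellich compactness makes $\mH^p_N(M)$, and likewise $\mH^p_D(M)$, finite dimensional. For $\mH^p(M)=\mathcal{EH}^p(M)\oplus\mH^p_N(M)$ I would check orthogonality from~(\ref{Green}) using $\bn h=0$, and obtain completeness by the same solvability scheme restricted to harmonic fields; the decomposition $\mH^p(M)=c\mathcal{EH}^p(M)\oplus\mH^p_D(M)$ then follows by applying the Hodge star at complementary degree, since $*$ interchanges $d$ and $\delta$, relates $\bn$ to $\bt$ (Proposition~\ref{inbn}), and carries $\mathcal{EH}^{n-p}(M)\oplus\mH^{n-p}_N(M)$ onto $c\mathcal{EH}^p(M)\oplus\mH^p_D(M)$.

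Finally, Neumann harmonic fields are closed, so $h\mapsto[h]$ defines a map $\mH^p_N(M)\to H^p(M;\mathbb{R})$. Injectivity is immediate, since an exact $h=d\alpha$ in $\mH^p_N(M)$ satisfies $\|h\|^2=\ll h,d\alpha\rr=\ll\delta h,\alpha\rr+\int_{\partial M}i^*\alpha\wedge *\bn h=0$. For surjectivity I would use the decompositions: writing a closed $\omega=d\beta+\delta\eta+h$, the middle term is closed, coclosed and Neumann (by Proposition~\ref{inbn}), hence a harmonic field lying in $\delta\Omega^{p+1}_N(M)$ and therefore zero by orthogonality, so $\omega=d\beta+h$ with $[\omega]=[h]$, and the Friedrichs refinement replaces $h$ by its Neumann part without changing the class. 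The relative identification $\mH^p_D(M)\simeq H^p(M,\partial M;\mathbb{R})$ follows either by the same reasoning or by combining the isomorphism $*\colon\mH^p_D(M)\to\mH^{n-p}_N(M)$ with Poincaré--Lefschetz duality $H^{n-p}(M)\simeq H^p(M,\partial M)$.
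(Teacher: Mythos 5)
The paper does not prove this theorem at all: it is imported verbatim from Schwarz~\cite{Schwarz} as background, so there is no proof of record to compare against. Your outline is the standard argument from that reference and is essentially sound: the orthogonality computations via Green's formula~(\ref{Green}) are correct, completeness of the three-term splitting is indeed the one analytic step and your two inputs (the Gaffney inequality on forms with $\bt\omega=0$ or $\bn\omega=0$ to close the ranges, plus elliptic regularity for $d+\delta$) are exactly what Schwarz uses, and the finite-dimensionality and de Rham identifications are handled correctly. The thinnest point is the completeness half of the Friedrichs refinement $\mH^p(M)=\mathcal{EH}^p(M)\oplus\mH^p_N(M)$, which you dispatch with ``the same solvability scheme restricted to harmonic fields''; to make this concrete, note that the main decomposition applied to an exact form gives $\mathcal E^p(M)=d\Omega^{p-1}_D(M)\oplus\mathcal{EH}^p(M)$, so a harmonic field $\kappa$ orthogonal to $\mathcal{EH}^p(M)$ is orthogonal to all of $\mathcal E^p(M)$, and Green's formula then forces $\bn\kappa=0$. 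With that filled in, the proposal is a faithful reconstruction of the cited proof.
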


In fact, one can say more on connection between spaces $\mathcal H^p_D(M)$ and $\mathcal H^p_N(M)$. 

\begin{theorem}[DeTurck, Gluck~\cite{Shonkwiler}]
\label{DTG}
Let $M$ be compact orientable Riemannian manifold with nonempty boundary $\partial M$. Then within the space $\Omega^p(M)$,
\begin{itemize}
\item[(a)] $\mH^p_N(M)$ and $\mH^p_D(M)$ meet only at the origin,
\item[(b)] each of those spaces has decomposition into boundary and interior subspaces,
$$
\mH^p_N(M) = c\mathcal E\mathcal H^p_N(M)\oplus\mathcal E_\partial\mathcal H^p_N(M),
$$
$$
\mH^p_D(M) = \mathcal E\mathcal H^p_D(M)\oplus c\mathcal E_\partial\mathcal H^p_D(M),
$$
where $\mE_\partial$($c\mE_\partial$) denotes the spaces of forms $\omega$ such that $i^*\omega$($i_n\omega$) is a closed (coclosed) form on $\partial M$.
\item[(c)]  $c\mathcal E\mathcal H^p_N(M)\perp\mH^p_D(M)$ and $\mathcal E\mathcal H^p_D(M)\perp\mH^p_N(M)$,
\item[(d)] no larger subspace of $\mH_N^p(M)$ is orthogonal to all of $\mH_D^p(M)$ and no larger subspace of $\mH_D^p(M)$ is orthogonal to all of $\mH_N^p(M)$.
\item[(e)] $\dim\mathcal E_\partial\mathcal H^p_N(M) = \dim c\mathcal E_\partial\mathcal H^p_D(M)$.
\end{itemize}  
\end{theorem}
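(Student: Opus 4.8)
The statement is the DeTurck--Gluck structure theorem, and the plan is to read it off from the Hodge--Morrey--Friedrichs decomposition together with the long exact cohomology sequence of the pair $(M,\partial M)$, using the Hodge star to pass between degrees $p$ and $n-p$; orientability is used throughout. Recall the identifications $\mH^p_N(M)\cong H^p(M)$ and $\mH^p_D(M)\cong H^p(M,\partial M)$, under which $\omega\mapsto[i^*\omega]$ agrees with the de Rham restriction $i^*\colon H^p(M)\to H^p(\partial M)$ (here $i^*\omega$ is closed because $d\omega=0$), and the fact that $*$ restricts to isomorphisms $\mH^p_N(M)\to\mH^{n-p}_D(M)$ and $c\mE\mH^p_N(M)\to\mE\mH^{n-p}_D(M)$ interchanging Neumann and Dirichlet conditions. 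Throughout, $\mE_\partial\mH^p_N(M)$ denotes the Neumann fields whose tangential trace $i^*\omega$ is exact on $\partial M$ (it is automatically closed), and $c\mE_\partial\mH^p_D(M)$ the Dirichlet fields whose $i_n\eta$ is coexact. Part (c) is then immediate from Green's formula~(\ref{Green}): for $\omega=\delta\beta\in c\mE\mH^p_N(M)$ and $\eta\in\mH^p_D(M)$ one has $\ll\omega,\eta\rr=\ll d\eta,\beta\rr-\int_{\partial M}\langle i^*\eta,i_n\beta\rangle=0$ since $d\eta=0$ and $i^*\eta=0$, and the orthogonality $\mE\mH^p_D(M)\perp\mH^p_N(M)$ is the same computation (or its $*$-image).

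For (b) I would first check that the two summands meet only at the origin. If $\omega\in c\mE\mH^p_N(M)\cap\mE_\partial\mH^p_N(M)$, write $\omega=\delta\beta$ with $i_n\omega=0$ and $i^*\omega=d_{\partial M}\tau$; Green's formula gives
\[
\|\omega\|^2=\ll\omega,\delta\beta\rr=-\int_{\partial M}\langle i^*\omega,i_n\beta\rangle=-\int_{\partial M}\langle d_{\partial M}\tau,i_n\beta\rangle ,
\]
and integrating by parts on the closed manifold $\partial M$ together with $\delta_{\partial M}i_n\beta=\pm i_n\delta\beta=\pm i_n\omega=0$ (Proposition~\ref{inbn}) shows this vanishes, so $\omega=0$; the Dirichlet case is the $*$-image. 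To see that the summands span $\mH^p_N(M)$ it then suffices to match dimensions. Here $\dim\mE_\partial\mH^p_N(M)=\dim\ker i^*_p=\dim\im\, j_p$ with $j_p\colon H^p(M,\partial M)\to H^p(M)$ the natural map, so one needs $\dim c\mE\mH^p_N(M)=\operatorname{rank} i^*_p$. Applying $*$ and chasing the exact sequence rewrites the left side as $\dim\ker j_{n-p}=b_{n-1-p}(\partial M)-\operatorname{rank} i^*_{n-1-p}$, so the required identity reduces to $\operatorname{rank} i^*_p+\operatorname{rank} i^*_{n-1-p}=b_p(\partial M)$, i.e. the ``half lives, half dies'' statement that $\im\, i^*_p$ and $\im\, i^*_{n-1-p}$ are mutually annihilating under the intersection pairing on $\partial M$. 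This is a consequence of Poincar\'e--Lefschetz duality and the exact sequence, and is where orientability is essential.

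Granting (b), part (e) is then purely formal: by the two decompositions and the identifications above, $\dim\mE_\partial\mH^p_N(M)=\dim\ker i^*_p=\operatorname{rank} j_p$, while $\dim c\mE_\partial\mH^p_D(M)=\dim\mH^p_D(M)-\dim\mE\mH^p_D(M)=b_p(M,\partial M)-\dim\ker j_p=\operatorname{rank} j_p$, so the two coincide. For (a), an element of $\mH^p_N(M)\cap\mH^p_D(M)$ is a harmonic field with $\bt\omega=\bn\omega=0$, hence with vanishing Cauchy data for the first-order elliptic (Dirac-type) system $(d+\delta)\omega=0$; unique continuation (Aronszajn), equivalently extension by zero across the double of $M$, forces $\omega\equiv 0$.

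The remaining part (d) is the main obstacle. Using (c) and (b), any $\omega\in\mH^p_N(M)$ orthogonal to all of $\mH^p_D(M)$ has its $\mE_\partial\mH^p_N(M)$-component again orthogonal to all of $\mH^p_D(M)$, so the claim is equivalent to the non-degeneracy of the $L^2$ pairing between the complementary pieces $\mE_\partial\mH^p_N(M)$ and $c\mE_\partial\mH^p_D(M)$. I would obtain this by identifying that pairing, via the Hodge star and Stokes' theorem, with the non-degenerate Poincar\'e--Lefschetz pairing between the corresponding cohomology subspaces; concretely, this is the assertion that the Poincar\'e duality angles between $\mH^p_N(M)$ and $\mH^p_D(M)$ are all nonzero once the ``completely orthogonal'' parts $c\mE\mH^p_N(M)$ and $\mE\mH^p_D(M)$ have been removed. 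Making this non-degeneracy precise---as opposed to the essentially formal steps (b), (c), (e) and the single analytic input (a)---is where the real work lies.
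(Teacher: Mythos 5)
The paper itself offers no proof of this statement---it is imported verbatim from DeTurck--Gluck via the cited paper of Shonkwiler---so there is no internal argument to compare against and your attempt must stand on its own. Parts (a), (b), (c) and (e) of your proposal are essentially sound: (c) is the standard Green's-formula computation; (a) is genuinely a unique-continuation fact for the elliptic system $(d+\delta)\omega=0$ with vanishing full Cauchy data; your dimension count for (b), which reduces the span condition to $\operatorname{rank} i^*_p+\operatorname{rank} i^*_{n-1-p}=b_p(\partial M)$ (``half lives, half dies'', a consequence of Poincar\'e--Lefschetz duality and the long exact sequence of the pair), is the standard route and is where orientability enters; and (e) then follows formally as you say.

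The genuine gap is part (d), which you explicitly leave open after reducing it to the non-degeneracy of the $L^2$-pairing between $\mE_\partial\mH^p_N(M)$ and $c\mE_\partial\mH^p_D(M)$ and declaring that making this precise ``is where the real work lies.'' In fact no such work is needed: (d) is a two-line consequence of the Friedrichs decomposition $\mH^p(M)=c\mE\mH^p(M)\oplus\mH^p_D(M)$ already quoted in the paper, which is an $L^2$-orthogonal decomposition---the same Green's-formula computation as in your (c) shows that \emph{every} coexact form $\delta\beta$ is orthogonal to \emph{every} closed Dirichlet form. Orthogonality of the two summands forces the orthogonal complement of $\mH^p_D(M)$ inside $\mH^p(M)$ to be exactly $c\mE\mH^p(M)$, so any subspace of $\mH^p_N(M)$ orthogonal to all of $\mH^p_D(M)$ is contained in $\mH^p_N(M)\cap c\mE\mH^p(M)=c\mE\mH^p_N(M)$; the Dirichlet half is the mirror image using $\mH^p(M)=\mE\mH^p(M)\oplus\mH^p_N(M)$. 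Your proposed route through the Poincar\'e--Lefschetz intersection pairing can be made rigorous (it is essentially Shonkwiler's statement that the Poincar\'e duality angles are nonzero), but as written the decisive step of (d) is asserted rather than proved, so the argument is incomplete.
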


Hodge-Morrey-Friedrichs decomposition (simply Hodge decomposition in the following) can be used to solve boundary problems for differential forms. It is the subject of Schwarz's book~\cite{Schwarz}. Here we collect several results from that book.

\begin{theorem}[\cite{Schwarz}, Theorem 3.1.1, Lemma 3.1.2]
\label{tangent}
The system
\begin{equation}
\left\{
   \begin{array}{rcl}
	d\omega &=& \chi,\\
	\delta\omega &=& 0,\\
	 i^* \omega &=& \phi\\
   \end{array}
\right.
\end{equation}
has a solution iff $d\chi = 0$, $\bt\chi = \bt d\phi$ and for any $\lambda\in\mH^{p+1}_D(M)$
$$
\ll \chi,\lambda\rr = \int\limits_{\partial M}\phi\wedge*\bn\lambda.
$$
The solution is unique up to an element of $\mH^p_D$.
%Moreover, choosing the solution to be orthogonal to $\mH^p_D$ one has the following Sobolev bounds
%\begin{equation}
%||\omega||_{H^{s+1}(M)}\leqslant C\left(||\chi||_{H^{s}(M)} + ||\phi||_{H^{s+1/2}(\partial M)}\right).
%\end{equation}
%{\bf In~\cite{Schwarz} it is stated for $s\in \mathbb N_0$. Seems weird. }
\end{theorem}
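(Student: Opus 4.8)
The plan is to prove necessity by a direct computation and sufficiency by peeling off the three conditions one at a time with the Hodge--Morrey--Friedrichs decomposition. Necessity is immediate: if $\omega$ solves the system, then $\chi=d\omega$ is exact, so $d\chi=0$; applying $i^*$ and using $\bt d=d\bt$ from Proposition~\ref{inbn} gives $\bt\chi=d\bt\omega=\bt d\phi$. Finally, for $\lambda\in\mathcal H^{p+1}_D(M)$ Green's formula~\eqref{Green}, together with $\delta\lambda=0$ and $i^*\omega=\phi$, yields
\[
\ll\chi,\lambda\rr=\ll d\omega,\lambda\rr=\ll\omega,\delta\lambda\rr+\int_{\partial M}\phi\wedge *\bn\lambda=\int_{\partial M}\phi\wedge *\bn\lambda,
\]
which is the third condition.

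For sufficiency I would first extend $\phi$ to some $\omega_0\in\Omega^p(M)$ with $i^*\omega_0=\phi$ and set $\psi:=\chi-d\omega_0$. Then $\psi$ is closed ($d\chi=0$) and satisfies $i^*\psi=\bt\chi-d\phi=0$, so $\psi\in\mathcal C^{p+1}_D(M)$. The Hodge--Morrey--Friedrichs decomposition shows that closed Dirichlet forms split as $\mathcal C^{p+1}_D(M)=d\Omega^p_D(M)\oplus\mathcal H^{p+1}_D(M)$, the coexact summand being killed because $\psi$ is closed and the relevant Neumann boundary term vanishes. The $\mathcal H^{p+1}_D(M)$-component of $\psi$ is its $L^2$-projection, and repeating the computation from the necessity step gives $\ll\psi,\lambda\rr=\ll\chi,\lambda\rr-\int_{\partial M}\phi\wedge *\bn\lambda=0$ for every $\lambda\in\mathcal H^{p+1}_D(M)$ by the third hypothesis. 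Hence $\psi=d\eta$ with $\eta\in\Omega^p_D(M)$, and $\omega_1:=\omega_0+\eta$ satisfies $d\omega_1=\chi$ and $i^*\omega_1=\phi$.

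It remains to enforce $\delta\omega=0$ without disturbing the other two conditions, and this is the step I expect to be the main obstacle. I would seek the solution as $\omega=\omega_1+d\beta$ with $\beta\in\Omega^{p-1}_D(M)$; adding an exact Dirichlet form leaves $d\omega=\chi$ unchanged and preserves $i^*\omega=\phi$, since $i^*d\beta=d\,i^*\beta=0$. Taking $\beta$ to be the $H^1$-minimiser of $\lVert\omega_1+d\beta\rVert_{L^2(M)}^2$ over $\beta\in H^1\Omega^{p-1}_D(M)$ makes $\omega$ orthogonal to $d\Omega^{p-1}_D(M)$; by Green's formula and the $L^2$-density of $\Omega^{p-1}_D(M)$ (which contains every form supported in the interior), the first variation forces $\delta\omega=0$. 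The delicate point is precisely the existence, regularity, and closed-range input needed for this projection, which is where the full Hodge theory on manifolds with boundary from~\cite{Schwarz} enters, since $\delta d$ alone is not elliptic.

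Finally, uniqueness is clear: if $\omega$ and $\omega'$ both solve the system, then $\omega-\omega'$ is closed, coclosed, and satisfies $i^*(\omega-\omega')=0$, hence lies in $\mathcal H^p_D(M)$; conversely any element of $\mathcal H^p_D(M)$ may be added to a solution. Thus the solution set is a coset of $\mathcal H^p_D(M)$, as claimed.
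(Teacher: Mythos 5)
The paper does not prove this statement at all: it is quoted verbatim from Schwarz's book (Theorem 3.1.1 and Lemma 3.1.2 of \cite{Schwarz}), so there is no in-paper argument to compare against. Judged on its own, your proof is correct and is essentially the standard Hodge-theoretic argument. Necessity is exactly as you say. For sufficiency, your reduction to $\psi=\chi-d\omega_0\in\mathcal C^{p+1}_D(M)$, the identification $\mathcal C^{p+1}_D(M)=d\Omega^p_D(M)\oplus\mH^{p+1}_D(M)$ (the $\delta\Omega^{p+2}_N$-component dies because $\ll d\delta\beta,\beta\rr=\|\delta\beta\|^2$ when $i_n\beta=0$), and the vanishing of the $\mH^{p+1}_D$-projection via the integral hypothesis all check out. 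The one place where you make life harder than necessary is the final step: rather than minimising $\|\omega_1+d\beta\|_{L^2}^2$ over $\beta\in H^1\Omega^{p-1}_D(M)$ and then worrying about existence, regularity and closed range, you can simply apply the smooth Hodge--Morrey--Friedrichs decomposition (already recorded in the paper) to $\omega_1=d\alpha+\delta\gamma+\kappa$ with $\alpha\in\Omega^{p-1}_D(M)$ and set $\omega=\omega_1-d\alpha=\delta\gamma+\kappa$; this is manifestly smooth and coclosed, still satisfies $d\omega=\chi$ and $i^*\omega=\phi$ since $i^*d\alpha=d\,i^*\alpha=0$, and entirely sidesteps the ``delicate point'' you flag. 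Your uniqueness argument is correct as written.
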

As an immediate corollary we obtain the following.
\begin{corollary} One has the following description
\label{iH}
$$
i^*\mathcal H^p(M) = \{\psi\in\mC^p(\partial M)|\, \psi\perp i_n \mathcal H^{p+1}_D\}.
$$
Moreover, $\mathcal E^{p}(\partial M)\subset i^*\mathcal H^p(M)$.
\end{corollary}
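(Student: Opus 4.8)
The plan is to read the statement off directly from Theorem~\ref{tangent} by specialising the inhomogeneous system to $\chi = 0$. For $\chi = 0$ that system becomes $d\omega = 0$, $\delta\omega = 0$, $i^*\omega = \phi$, whose solutions $\omega$ are precisely the harmonic fields in $\mH^p(M)$ with prescribed tangential trace $\phi$. Hence a form $\phi\in\Omega^p(\partial M)$ lies in $i^*\mH^p(M)$ if and only if this system is solvable, and I would simply transcribe the three solvability conditions supplied by Theorem~\ref{tangent}. Since all solutions produced by that theorem carry the same trace $\phi$, the fact that uniqueness holds only up to $\mH^p_D(M)$ does not affect the description of the image.

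First I would note that $d\chi = 0$ holds trivially and that the compatibility condition $\bt\chi = \bt d\phi$ collapses to $d\phi = 0$, i.e.\ $\phi\in\mC^p(\partial M)$; this is the source of the closedness constraint in the statement. It then remains to interpret the integral condition $\ll\chi,\lambda\rr = \int_{\partial M}\phi\wedge *\bn\lambda$ for all $\lambda\in\mH^{p+1}_D(M)$. With $\chi = 0$ the left-hand side vanishes, so the condition becomes $\int_{\partial M}\phi\wedge *\bn\lambda = 0$ for every $\lambda\in\mH^{p+1}_D(M)$. Using the boundary term of Green's formula~(\ref{Green}), namely $\int_{\partial M} i^*\alpha\wedge *\bn\beta = \int_{\partial M}\langle i^*\alpha,i_n\beta\rangle\,dA$, this integral equals the $L^2(\partial M)$-pairing $\ll\phi,i_n\lambda\rr$, so the condition is exactly $\phi\perp i_n\mH^{p+1}_D(M)$. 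Combining the two constraints gives the claimed description of $i^*\mH^p(M)$.

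For the inclusion $\mE^p(\partial M)\subset i^*\mH^p(M)$ I would verify the two conditions for an exact form $\phi = d\eta$. Closedness is immediate since $d\phi = 0$. For orthogonality, fix $\lambda\in\mH^{p+1}_D(M)$; by Proposition~\ref{inbn} one has $\delta i_n\lambda = \pm i_n\delta\lambda = 0$, so $i_n\lambda$ is coclosed on the closed manifold $\partial M$. Integrating by parts on $\partial M$, which has no boundary, then yields $\ll d\eta,i_n\lambda\rr = \ll\eta,\delta i_n\lambda\rr = 0$, which is precisely $\phi\perp i_n\mH^{p+1}_D(M)$.

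The argument is essentially bookkeeping, so the only genuine subtlety — and the step I would treat most carefully — is the passage from the abstract boundary data in Theorem~\ref{tangent} to intrinsic quantities on $\partial M$: rewriting $\bt\chi = \bt d\phi$ as the closedness of $\phi$ and identifying $\int_{\partial M}\phi\wedge *\bn\lambda$ with the $L^2(\partial M)$ inner product $\ll\phi,i_n\lambda\rr$. Both rest on the commutation relations of Proposition~\ref{inbn} and on the boundary term in Green's formula~(\ref{Green}).
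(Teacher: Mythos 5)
Your proposal is correct and takes essentially the same route as the paper: the equality is read off from Theorem~\ref{tangent} specialised to $\chi=0$, with the integral condition translated into $\phi\perp i_n\mH^{p+1}_D(M)$ via the boundary term of Green's formula. The inclusion $\mE^p(\partial M)\subset i^*\mH^p(M)$ is verified by the same computation as in the paper --- Stokes on the closed manifold $\partial M$ together with $\bn\delta=\delta\bn$ and $\delta\lambda=0$ --- merely written with inner products and the adjointness of $d$ and $\delta$ instead of wedge products.
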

\begin{proof}
The equality is a direct consequence of the theorem above. The inclusion follows from the following calculation. For any $d\alpha\in\mathcal E^p(\partial M)$ and any $\lambda\in \mathcal H_D^{p+1}(M)$ one has
$$
\ll d\alpha, i_n\xi\rr = \int_{\partial M}d\alpha\wedge *\bn \lambda = \int_{\partial M}d(\alpha\wedge*\bn\lambda) \pm \int_{\partial M}\alpha\wedge*\bn\delta\lambda = 0, 
$$
where we used Stokes theorem and identities $\bn\delta = \delta\bn$, $\delta\lambda = 0$.
\end{proof}

By applying the Hodge $*$-operator to the statement of Theorem~\ref{tangent} one obtains the next theorem.

\begin{theorem}[\cite{Schwarz}, Corollary 3.1.3] 
\label{normal}
The system
\begin{equation}
\left\{
   \begin{array}{rcl}
	d\omega &=& 0,\\
	\delta\omega &=& \chi,\\
	 i_n\omega &=& \phi\\
   \end{array}
\right.
\end{equation}
has a solution iff $\delta\chi = 0$, $\bn\chi = \bn \delta\phi$ and for any $\lambda\in\mH^{p-1}_N(M)$
$$
\ll \chi,\lambda\rr = -\int\limits_{\partial M}\bt\lambda\wedge*\phi.
$$
The solution is unique up to an element of $\mH^p_N(M)$. 
%Moreover, choosing the solution to be orthogonal to $\mH^p_N(M)$ one has the following Sobolev estimates
%\begin{equation}
%||\omega||_{H^{s+1}(M)}\leqslant C\left(||\chi||_{H^{s}(M)} + ||\phi||_{H^{s+1/2}(\partial M)}\right).
%\end{equation}
\end{theorem}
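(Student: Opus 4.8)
The statement is precisely the Hodge-star dual of Theorem~\ref{tangent}, so the plan is to deduce it by conjugating the whole problem with the operator $*$. First I would set $\eta = *\omega$, so $\eta\in\Omega^{n-p}(M)$, and rewrite the three defining conditions under the star. Using the standard commutation identities $*d = \pm\,\delta*$ and $*\delta = \pm\, d*$ (which are local, hence valid on a manifold with boundary), the equation $d\omega = 0$ becomes $\delta\eta = 0$, and the equation $\delta\omega = \chi$ becomes $d\eta = \pm *\chi =: \tilde\chi$, where $\tilde\chi$ is an $(n-p+1)$-form. By Proposition~\ref{inbn} the normal boundary condition $i_n\omega = \phi$ translates into a tangential condition $i^*\eta = \psi$, with $\psi = \bt\eta = *\bn\omega$ the boundary form determined by $\phi$. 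Thus solving the system of Theorem~\ref{normal} for $\omega$ is equivalent to solving the system of Theorem~\ref{tangent} for $\eta$ with data $\tilde\chi,\psi$ in degree $q = n-p$.

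Next I would translate the solvability criteria of Theorem~\ref{tangent} back through $*$. The closedness condition $d\tilde\chi = 0$ becomes $\delta\chi = 0$ (apply $*$ and use $*d* = \pm\,\delta$), and the tangential compatibility $\bt\tilde\chi = \bt d\psi$ becomes $\bn\chi = \bn\delta\phi$, once more using the intertwining relations $*\bn = \bt*$, $i^*d \leftrightarrow i_n\delta$ of Proposition~\ref{inbn}. Since $*$ interchanges Dirichlet and Neumann harmonic fields, one has $*\mH^{\,q+1}_D(M) = *\mH^{\,n-p+1}_D(M) = \mH^{\,p-1}_N(M)$, so the compatibility condition ranging over $\lambda\in\mH^{\,q+1}_D(M)$ in Theorem~\ref{tangent} turns into a condition ranging over $\lambda\in\mH^{\,p-1}_N(M)$, exactly as in the statement. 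The boundary pairing on the right-hand side should convert as
$$\int_{\partial M}\psi\wedge *\bn\lambda \;=\; -\int_{\partial M}\bt\lambda\wedge *\phi,$$
under the substitutions $\psi = *\bn\omega$, $i_n\omega = \phi$, and $\lambda\mapsto *\lambda$.

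For the uniqueness clause I would invoke that Theorem~\ref{tangent} determines $\eta$ up to an element of $\mH^{\,n-p}_D(M)$; applying $*$ and using $*\mH^{\,n-p}_D(M) = \mH^{\,p}_N(M)$ shows that $\omega$ is determined up to an element of $\mH^p_N(M)$, which is the asserted ambiguity.

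The routine portion of the argument is the degree and sign bookkeeping in the star relations; these are fixed once and for all and, as remarked after Proposition~\ref{inbn}, the precise signs do not enter the final statement. The one step that requires genuine care, and which I expect to be the main obstacle, is verifying that the boundary integral transforms with the correct sign and shape — that is, establishing the displayed identity above. I would check it by expanding $\bn = dn\wedge i_n(\cdot)$, tracking the interaction between the ambient Hodge star on $M$ and the induced star on $\partial M$, and using the graded commutativity $\alpha\wedge\beta = (-1)^{\deg\alpha\,\deg\beta}\beta\wedge\alpha$, which is the source of the overall minus sign distinguishing the compatibility condition of Theorem~\ref{normal} from that of Theorem~\ref{tangent}.
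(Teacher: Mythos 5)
Your proposal is correct and follows exactly the route the paper takes: the text introduces Theorem~\ref{normal} with the single remark that it is obtained ``by applying the Hodge $*$-operator to the statement of Theorem~\ref{tangent}'' (citing Schwarz, Corollary~3.1.3), which is precisely your conjugation argument, and your degree bookkeeping ($*\mH^{n-p+1}_D(M)=\mH^{p-1}_N(M)$, $*\mH^{n-p}_D(M)=\mH^{p}_N(M)$) checks out. You simply carry out in detail the translation the paper leaves implicit.
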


\begin{corollary}
One has the following equalities,
\label{inH}
\begin{equation}
\label{1}
i_n\mathcal H^p(M) = \{\psi\in c\mC^{p-1}(\partial M)|\, \psi\perp i^*\mathcal H^{p-1}_N(M)\}
\end{equation}
\begin{equation*}
i_n\mathcal H^p(M) = (i^*\mathcal H^{p-1}(M))^{\perp}
\end{equation*}
\end{corollary}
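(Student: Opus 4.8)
The plan is to read off the first equality~(\ref{1}) directly from Theorem~\ref{normal}, in exact parallel with the way Corollary~\ref{iH} was obtained from Theorem~\ref{tangent}, and then to deduce the second equality from the first together with the Hodge--Morrey--Friedrichs decomposition and Corollary~\ref{iH}.

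For the first equality, observe that a $(p-1)$-form $\psi$ on $\partial M$ lies in $i_n\mathcal{H}^p(M)$ if and only if the system of Theorem~\ref{normal} with $\chi = 0$ and $\phi = \psi$ is solvable: such an $\omega$ satisfies $d\omega = 0$, $\delta\omega = 0$ and $i_n\omega = \psi$, i.e. $\omega\in\mathcal{H}^p(M)$. I would then specialise the solvability conditions to $\chi = 0$. The condition $\delta\chi = 0$ is vacuous, while $\bn\chi = \bn\delta\phi$, combined with the identity $i_n\delta = \pm\delta i_n$ of Proposition~\ref{inbn}, forces $\psi$ to be coclosed on $\partial M$, so $\psi\in c\mathcal{C}^{p-1}(\partial M)$. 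The remaining compatibility condition $\ll\chi,\lambda\rr = -\int_{\partial M}\bt\lambda\wedge *\phi$ for $\lambda\in\mathcal{H}^{p-1}_N(M)$ becomes $\int_{\partial M}\bt\lambda\wedge *\psi = 0$, which, upon rewriting the boundary integral as the intrinsic pairing $\ll i^*\lambda,\psi\rr$ on $\partial M$, says precisely that $\psi\perp i^*\mathcal{H}^{p-1}_N(M)$. This establishes~(\ref{1}).

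For the second equality I would show that $(i^*\mathcal{H}^{p-1}(M))^{\perp}$ coincides with the right-hand side of~(\ref{1}). By Corollary~\ref{iH} we have $\mathcal{E}^{p-1}(\partial M)\subset i^*\mathcal{H}^{p-1}(M)$, so any $\psi$ orthogonal to $i^*\mathcal{H}^{p-1}(M)$ is orthogonal to all exact forms and hence coclosed; conversely a coclosed form is automatically orthogonal to $\mathcal{E}^{p-1}(\partial M)$. Thus $(i^*\mathcal{H}^{p-1}(M))^{\perp} = \{\psi\in c\mathcal{C}^{p-1}(\partial M)\mid \psi\perp i^*\mathcal{H}^{p-1}(M)\}$, and it remains to replace $\mathcal{H}^{p-1}(M)$ by $\mathcal{H}^{p-1}_N(M)$ for coclosed test forms. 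Using the decomposition $\mathcal{H}^{p-1}(M) = \mathcal{EH}^{p-1}(M)\oplus\mathcal{H}^{p-1}_N(M)$, I would write $\xi\in\mathcal{H}^{p-1}(M)$ as $\xi = d\alpha + \nu$ with $\nu\in\mathcal{H}^{p-1}_N(M)$; then $i^*\xi = d(i^*\alpha) + i^*\nu$ since $i^*d = d\,i^*$, so the image of the exact part lands in $\mathcal{E}^{p-1}(\partial M)$. A coclosed $\psi$ is orthogonal to this exact part for free, whence $\psi\perp i^*\mathcal{H}^{p-1}(M)$ is equivalent to $\psi\perp i^*\mathcal{H}^{p-1}_N(M)$. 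Combining this with~(\ref{1}) yields both equalities.

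The two reductions of the solvability conditions of Theorem~\ref{normal} are routine, and the one point that I expect to be the main obstacle is the identification of the boundary integral $\int_{\partial M}\bt\lambda\wedge *\psi$ with the intrinsic $L^2(\partial M)$-pairing $\ll i^*\lambda,\psi\rr$. This is a bookkeeping exercise in the Hodge-star relations $*\bn = \bt*$ and $*i_n = \pm i^*$ of Proposition~\ref{inbn}, and carrying it out correctly is exactly what guarantees that the compatibility condition produces orthogonality to $i^*\mathcal{H}^{p-1}_N(M)$ rather than to some starred variant of it. The decomposition step for the second equality is conceptually transparent — it only uses that the $i^*$-image of an exact harmonic field is exact on $\partial M$ — but it is the place most prone to a stray sign or a swapped boundary condition.
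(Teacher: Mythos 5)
Your proposal is correct and follows essentially the same route as the paper: the first equality is read off from the solvability conditions of Theorem~\ref{normal} specialised to $\chi=0$, and the second is obtained by splitting $i^*\mathcal H^{p-1}(M)$ into $\mathcal E^{p-1}(\partial M)$ plus $i^*\mathcal H^{p-1}_N(M)$ via the Hodge--Morrey--Friedrichs decomposition and Corollary~\ref{iH}, then passing to orthogonal complements. The only cosmetic difference is that you argue the second step element-wise while the paper takes the orthogonal complement of the subspace identity $i^*\mathcal H^{p-1}(M)=\mathcal E^{p-1}(\partial M)+i^*\mathcal H^{p-1}_N(M)$ directly.
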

\begin{proof}
The first equality is a direct consequence of the theorem above. 

Let us prove the second. Note that 
$i^*\mathcal H^{p-1}(M) = i^*\mathcal E\mathcal H^{p-1}(M) + i^*\mathcal H^{p-1}_N(M)$, where "+" denotes the sum of the subspaces (not necessarily direct). Moreover, $i^*\mathcal E\mathcal H^{p-1}(M)\subset \mathcal E^{p-1}(\partial M)$ and by Corollary~\ref{iH}, $\mathcal E^{p-1}(\partial M)\subset i^*\mathcal H^{p-1}(M)$, therefore 
$$
i^*\mathcal H^{p-1}(M) = \mathcal E^{p-1}(\partial M) + i^*\mathcal H^{p-1}_N(M).
$$

Taking orthogonal complement of both sides yields
$$
(i^*\mathcal H^{p-1}(M))^\perp = (\mathcal E^{p-1}(\partial M))^\perp \cap (i^*\mathcal H^{p-1}_N(M))^{\perp} = c\mathcal C^{p-1}(\partial M)\cap (i^*\mathcal H^{p-1}_N(M))^{\perp},
$$
which is exactly the right hand side of equality~(\ref{1}). 
\end{proof}

\subsection{Properties of the Dirichlet-to-Neumann map} In this section we study elementary properties of the map $\Lambda$.

\begin{proposition}
\label{indtod}
Any solution of 
$$
\left\{
   \begin{array}{rcl}
	\Delta\omega &=& 0,\\
	i^*\delta\omega &=& 0\\
   \end{array}
\right.
$$
satisfies $\delta\omega = 0$. Similarly, any solution of 
$$
\left\{
   \begin{array}{rcl}
	\Delta\omega &=& 0,\\
	i_n d\omega &=& 0\\
   \end{array}
\right.
$$
satisfies $d\omega = 0$.

\end{proposition}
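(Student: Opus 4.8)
The plan is to establish the first statement by a two-stage integration by parts, and then to deduce the second from it by Hodge duality. Throughout I will use Green's formula~(\ref{Green}) and the standard facts that the Hodge Laplacian commutes with $\delta$ and with $*$.

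For the first system, set $\eta := \delta\omega$. The first thing I would record are the three properties that make $\eta$ tractable: it is harmonic, since $\Delta\eta = \delta\Delta\omega = 0$; it is coclosed, since $\delta\eta = \delta^2\omega = 0$; and it satisfies the Dirichlet condition $i^*\eta = 0$, which is precisely the hypothesis $i^*\delta\omega = 0$. Here it is worth noting that a single integration by parts applied to $\ll\Delta\omega,\omega\rr$ will \emph{not} suffice to conclude $\delta\omega = 0$: it produces $||d\omega||^2 + ||\delta\omega||^2$ together with a boundary term involving $i_n d\omega$ over which we have no control. This is the main obstacle, and the remedy is to argue with $\eta$ itself rather than with $\omega$.

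The first stage shows that $\eta$ is in fact a harmonic field. Expanding $0 = \ll\Delta\eta,\eta\rr$ with two applications of~(\ref{Green}) yields $||d\eta||^2 + ||\delta\eta||^2$ plus the boundary contributions $\int_{\partial M}\langle i^*\delta\eta,\, i_n\eta\rangle\,dA - \int_{\partial M}\langle i^*\eta,\, i_n d\eta\rangle\,dA$. Since $\delta\eta = 0$ and $i^*\eta = 0$, every boundary term and the $||\delta\eta||^2$ term vanish, leaving $||d\eta||^2 = 0$, hence $d\eta = 0$. The second stage exploits coexactness: now that $d\eta = 0$ and $i^*\eta = 0$, Green's formula gives $||\eta||^2 = \ll\eta,\delta\omega\rr = \ll d\eta,\omega\rr - \int_{\partial M}\langle i^*\eta,\, i_n\omega\rangle\,dA = 0$, so that $\eta = \delta\omega = 0$, which proves the first claim.

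For the second system I would pass to the Hodge dual. Put $\tau := *\omega$; then $\Delta\tau = *\Delta\omega = 0$, and since $*d\omega = \pm\,\delta\tau$ the hypothesis $i_n d\omega = 0$ translates, via $*\bn = \bt*$ from Proposition~\ref{inbn}, into $i^*\delta\tau = 0$. Thus $\tau$ satisfies the hypotheses of the first system, so by what was just proved $\delta\tau = 0$, whence $*d\omega = \pm\,\delta\tau = 0$ and therefore $d\omega = 0$. I do not expect the sign bookkeeping in this final reduction to cause any difficulty, since only the vanishing of each quantity is used.
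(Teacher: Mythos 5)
Your proof is correct and takes essentially the same route as the paper: one sets $\eta=\delta\omega$, checks that it is harmonic, coclosed and satisfies $i^*\eta=0$, integrates by parts to get $d\eta=0$, and then concludes $\eta=0$; the reduction of the second claim to the first via $*\omega$ is also exactly the paper's argument. The only cosmetic difference is in the last step of the first part, where the paper cites the orthogonality of $c\mathcal{E}\mathcal{H}^{p-1}(M)$ and $\mathcal{H}^{p-1}_D(M)$ from the Friedrichs decomposition while you re-derive that orthogonality for $\eta$ by one further application of Green's formula.
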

\begin{proof}
To prove the first statement, note that form $\xi = \delta\omega$ satisfies 
$$
\left\{
   \begin{array}{rcl}
	\Delta\xi &=& 0,\\
	\delta\xi &=& 0,\\
	 i^*\xi &=& 0.\\
   \end{array}
\right.
$$
Therefore, by Green's formula 
$$
||d\xi||^2 = \ll\delta d\xi,\xi\rr + \int_{\partial M}\xi\wedge*\bn d\xi = 0,
$$
i. e. $\xi\in\mathcal H^{p-1}_D(M)$ and by construction $\xi\in c\mathcal E\mathcal H^{p-1}(M)$. Since those spaces are orthogonal, $\delta\omega=\xi=0$.

Application of the first statement to the form $*\omega$ yields the second statement.
\end{proof}

In view of this proposition, the requirement $\delta\omega = 0$ for the harmonic extension is equivalent to $i^*\delta\omega = 0$. Thus, equation~(\ref{DtN}) is a particular case of the following theorem.
\begin{theorem}[\cite{Schwarz}, Lemma 3.4.7]
\label{L'}
The system
\begin{equation*}
\left\{
   \begin{array}{rcl}
	\Delta\omega &=& \eta,\\
	i^*\delta\omega &=& \psi,\\
	 i^*\omega &=& \phi\\
   \end{array}
\right.
\end{equation*}
has a solution iff for any $\lambda\in\mH^{p}_D(M)$
$$
\ll \eta,\lambda\rr = \int\limits_{\partial M}\psi\wedge*\bn\lambda.
$$
The solution is unique up to an element of $\mH^p_D(M)$. 
\end{theorem}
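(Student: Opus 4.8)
The statement is the existence-and-uniqueness theory for the inhomogeneous \emph{Dirichlet} (relative) boundary value problem of the Hodge--Laplacian, in which $\phi$ prescribes the full tangential part and $\psi$ the tangential part of the coderivative. The plan is to prove necessity of the compatibility condition by a direct application of Green's formula, and sufficiency by recasting the system as a coercive variational problem, so that $\phi$ enters as an essential boundary condition and $\psi$ as a natural one.

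First I would check that the condition is necessary. Suppose $\omega$ solves the system and let $\lambda\in\mH^p_D(M)$, so that $d\lambda=0$, $\delta\lambda=0$ and $i^*\lambda=0$. Writing $\Delta=d\delta+\delta d$ and applying~(\ref{Green}) termwise: in $\ll d\delta\omega,\lambda\rr$ I take $\alpha=\delta\omega,\ \beta=\lambda$ and use $\delta\lambda=0$ to get $\int_{\partial M} i^*\delta\omega\wedge *\bn\lambda=\int_{\partial M}\psi\wedge *\bn\lambda$; in $\ll\delta d\omega,\lambda\rr$ I take $\alpha=\lambda,\ \beta=d\omega$ and use $d\lambda=0$ together with $i^*\lambda=0$ to get $0$. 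Hence $\ll\eta,\lambda\rr=\int_{\partial M}\psi\wedge *\bn\lambda$, which is exactly the stated compatibility condition.

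For sufficiency I would fix any $H^1$-extension of $\phi$ to make the affine space $\mathcal A=\{\omega\in H^1\Omega^p(M)\mid i^*\omega=\phi\}$ nonempty, with underlying linear space $\mathcal A_0=\{\xi\in H^1\Omega^p(M)\mid i^*\xi=0\}$, and consider
\[
F(\omega)=\tfrac12\ll d\omega,d\omega\rr+\tfrac12\ll\delta\omega,\delta\omega\rr-\ll\eta,\omega\rr+\int_{\partial M}\psi\wedge *\bn\omega .
\]
Computing the first variation along $\xi\in\mathcal A_0$ and integrating by parts via~(\ref{Green}), the boundary term carrying $i^*\xi$ drops out, and one finds that a critical point satisfies $\ll\Delta\omega-\eta,\xi\rr-\int_{\partial M}(i^*\delta\omega-\psi)\wedge *\bn\xi=0$ for every such $\xi$. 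Since $\bn\xi$ is unconstrained when $i^*\xi=0$, this yields both $\Delta\omega=\eta$ (testing against $\xi$ with $i_n\xi=0$) and the natural condition $i^*\delta\omega=\psi$; thus critical points of $F$ are precisely the solutions of the system.

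It remains to produce a critical point, and here the key analytic input is Gaffney's inequality: on $\mathcal A_0$ one has $\|\xi\|_{H^1}^2\leqslant C(\|d\xi\|^2+\|\delta\xi\|^2+\|\xi\|^2)$, which on the $L^2$-orthogonal complement of the kernel $\{\xi\in\mathcal A_0\mid d\xi=\delta\xi=0\}=\mH^p_D(M)$ upgrades, using compactness of $H^1\hookrightarrow L^2$, to coercivity of the symmetric form $B(\alpha,\beta)=\ll d\alpha,d\beta\rr+\ll\delta\beta,\delta\beta\rr$. The Fredholm alternative then provides a weak solution exactly when the bounded functional $\xi\mapsto\ll\eta,\xi\rr-\int_{\partial M}\psi\wedge *\bn\xi$ annihilates $\mH^p_D(M)$; evaluating it on $\lambda\in\mH^p_D(M)$ (where $i^*\lambda=0$ forces $B(\,\cdot\,,\lambda)=0$) reproduces the compatibility condition, and elliptic regularity for this boundary value problem promotes the weak solution to a smooth one. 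Uniqueness up to $\mH^p_D(M)$ is then immediate: the difference $\zeta$ of two solutions has $\Delta\zeta=0$, $i^*\zeta=0$, $i^*\delta\zeta=0$, and Green's formula gives $0=\ll\Delta\zeta,\zeta\rr=\|d\zeta\|^2+\|\delta\zeta\|^2$, so $d\zeta=\delta\zeta=0$ and $\zeta\in\mH^p_D(M)$. The main obstacle is precisely the coercivity-and-regularity step: Gaffney's inequality and the elliptic estimates for these tangential boundary conditions are the genuinely nontrivial analytic content, which I would invoke from~\cite{Schwarz} rather than reprove.
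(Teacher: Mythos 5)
The paper offers no proof of this statement at all: it is imported verbatim from Schwarz's book (Lemma 3.4.7) and used as a black box, so there is nothing internal to compare against. Your argument is a correct and fairly standard variational treatment of the same boundary value problem, and it hangs together: the necessity computation via Green's formula (splitting $\Delta=d\delta+\delta d$ and using $d\lambda=\delta\lambda=0$, $i^*\lambda=0$) is exactly right; the identification of $i^*\omega=\phi$ as the essential condition and $i^*\delta\omega=\psi$ as the natural one is correct, and your first-variation computation does recover both equations because $i_n\xi$ is unconstrained on $\{i^*\xi=0\}$; the kernel of the quadratic form on that space is precisely $\mH^p_D(M)$, so the Fredholm alternative reproduces the stated compatibility condition; and the uniqueness argument is clean. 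Two small slips worth fixing: the bilinear form should read $B(\alpha,\beta)=\ll d\alpha,d\beta\rr+\ll\delta\alpha,\delta\beta\rr$ (you wrote $\ll\delta\beta,\delta\beta\rr$), and it is $d\lambda=\delta\lambda=0$, not $i^*\lambda=0$, that makes $B(\,\cdot\,,\lambda)$ vanish for $\lambda\in\mH^p_D(M)$. You are also right to flag that the real analytic content — Gaffney's inequality for the tangential boundary condition, the compactness argument upgrading it to coercivity modulo the finite-dimensional kernel, and elliptic regularity for this boundary condition — is the nontrivial part; since you invoke those from Schwarz, your write-up is best viewed as a correct reduction of the lemma to the same source the paper cites, rather than an independent proof, which is a perfectly reasonable division of labour here.
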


The following propositions are proved in~\cite{BS}. However, since the notations in~\cite{BS} slightly differ from ours, the proofs are provided for the sake of completeness.

\begin{proposition}
\label{L}
The space $\mathcal L(\phi)$ of solutions $\phi$ to equation~(\ref{DtN}) is an affine space with an associated vector space $\mH^p_D$. Therefore there exists unique $\lambda(\phi)\in\mathcal L(\phi)$ such that $\lambda(\phi)\perp\mH^p_D$.
\end{proposition}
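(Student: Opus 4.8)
The plan is to recognise that the system~(\ref{DtN}) defining $\mathcal{L}(\phi)$ is a special case of the general boundary value problem in Theorem~\ref{L'}, and then to read off both the affine structure and the selection of $\lambda(\phi)$ from the existence and uniqueness assertions there. Nothing deeper than Theorem~\ref{L'} and a finite-dimensionality remark should be required.

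First I would invoke Proposition~\ref{indtod}: since any harmonic $\omega$ with $i^*\delta\omega = 0$ automatically satisfies $\delta\omega = 0$, the constraint $\delta\omega = 0$ in~(\ref{DtN}) may be replaced by $i^*\delta\omega = 0$ without changing the solution set. Consequently $\mathcal{L}(\phi)$ is exactly the set of solutions of the system in Theorem~\ref{L'} with $\eta = 0$ and $\psi = 0$. The solvability criterion of that theorem reads $\ll \eta,\lambda\rr = \int_{\partial M}\psi\wedge *\bn\lambda$ for all $\lambda\in\mH^p_D(M)$, which is trivially satisfied when $\eta$ and $\psi$ vanish; hence $\mathcal{L}(\phi)\neq\varnothing$ for every $\phi$.

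Next I would establish the affine structure by two inclusions. The uniqueness clause of Theorem~\ref{L'} says that any two solutions differ by an element of $\mH^p_D(M)$, so $\mathcal{L}(\phi)\subseteq \omega_0 + \mH^p_D(M)$ for a fixed $\omega_0\in\mathcal{L}(\phi)$. For the reverse inclusion I would check that adding $h\in\mH^p_D(M)$ to a solution produces another solution: a harmonic field is by definition closed and coclosed, so $\Delta h = 0$ and $\delta h = 0$, while the Dirichlet condition gives $i^*h = 0$; thus $\omega_0 + h$ again solves~(\ref{DtN}). This shows $\mathcal{L}(\phi) = \omega_0 + \mH^p_D(M)$, i.e.\ $\mathcal{L}(\phi)$ is affine with associated vector space $\mH^p_D(M)$.

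Finally, because $\mH^p_D(M)$ is finite-dimensional (it realises the relative cohomology $H^p(M,\partial M,\mathbb{R})$), the orthogonal projection onto it in $L^2$ is well-defined, and subtracting this projection from any $\omega_0\in\mathcal{L}(\phi)$ yields an element of $\mathcal{L}(\phi)$ orthogonal to $\mH^p_D(M)$. Uniqueness follows since two such elements would differ by a member of $\mH^p_D(M)$ that is also orthogonal to it, hence zero. The main obstacle is really the very first reduction step: one must be sure that Proposition~\ref{indtod} legitimately converts the pointwise coclosedness $\delta\omega = 0$ into the boundary condition $i^*\delta\omega = 0$, for it is this identification that lets the general theorem apply. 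Everything after that is bookkeeping with the finite-dimensional space $\mH^p_D(M)$.
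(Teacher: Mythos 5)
Your proposal is correct and follows essentially the same route as the paper: the paper's proof consists precisely of invoking Theorem~\ref{L'} with $\eta = 0$, $\psi = 0$ (the reduction of $\delta\omega = 0$ to $i^*\delta\omega = 0$ via Proposition~\ref{indtod} is stated in the text immediately before Theorem~\ref{L'}). You simply make explicit the bookkeeping --- both inclusions for the affine structure and the finite-dimensional projection onto $\mH^p_D(M)$ --- that the paper leaves implicit.
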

\begin{proof}
It suffices to check solvability condition in Theorem~\ref{L'} which is obvious as $\eta = 0$ and $\psi = 0$.
\end{proof}
\begin{proposition}
\label{kernel}
$$
\ker \Lambda = i^*\mathcal{H}^p(M)
$$
\end{proposition}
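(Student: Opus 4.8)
The plan is to read off the statement directly from the definition of $\Lambda$ together with the second half of Proposition~\ref{indtod}, proving the two inclusions separately. The one structural fact I would recall at the outset is that $\mathcal{H}^p(M)=\mathcal{C}c\mathcal{C}^p(M)$ consists exactly of forms that are simultaneously closed and coclosed, so membership $\omega\in\mathcal{H}^p(M)$ is equivalent to the pair of conditions $d\omega=0$ and $\delta\omega=0$ (which in particular forces $\Delta\omega=0$).

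For the inclusion $i^*\mathcal{H}^p(M)\subseteq\ker\Lambda$ I would argue as follows. Take $\phi=i^*\omega$ with $\omega\in\mathcal{H}^p(M)$. Then $\omega$ satisfies $\Delta\omega=0$, $\delta\omega=0$ and $i^*\omega=\phi$, so $\omega\in\mathcal{L}(\phi)$ and is an admissible choice for computing $\Lambda\phi=i_n d\omega$. Since $\omega$ is closed we have $d\omega=0$, hence $\Lambda\phi=0$. This direction needs no input beyond the definition.

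For the reverse inclusion $\ker\Lambda\subseteq i^*\mathcal{H}^p(M)$, suppose $\phi\in\ker\Lambda$ and choose any $\omega\in\mathcal{L}(\phi)$, which is nonempty by Proposition~\ref{L}. By the definition of the kernel, $\Lambda\phi=i_n d\omega=0$, while $\omega$ already solves $\Delta\omega=0$ by virtue of belonging to $\mathcal{L}(\phi)$. The decisive step is now the second statement of Proposition~\ref{indtod}: a form with $\Delta\omega=0$ and $i_n d\omega=0$ must satisfy $d\omega=0$. Together with $\delta\omega=0$ (again from $\omega\in\mathcal{L}(\phi)$) this places $\omega$ in $\mathcal{H}^p(M)$, so $\phi=i^*\omega\in i^*\mathcal{H}^p(M)$, completing the argument. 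I do not expect a genuine obstacle here, as the whole proof rests on Proposition~\ref{indtod}; the only point that deserves a word of care is that $\Lambda\phi=i_n d\omega$ is by construction independent of the representative $\omega\in\mathcal{L}(\phi)$, which legitimizes running the forward computation on an arbitrary solution and drawing the pointwise conclusion $d\omega=0$ from the single boundary relation $i_n d\omega=0$.
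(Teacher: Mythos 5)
Your argument is correct and is essentially identical to the paper's: the forward inclusion is read off from the definition (the paper calls it obvious), and the reverse inclusion is exactly the paper's application of the second statement of Proposition~\ref{indtod} to an arbitrary $\omega\in\mathcal{L}(\phi)$.
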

\begin{proof}
The inclusion $i^*\mathcal{H}^p(M)\subset \ker \Lambda$ is obvious.

For the inverse, suppose $\phi\in\ker\Lambda$ and let $\omega\in\mathcal L(\phi)$. Then $\omega$ satisfies $\Delta\omega = 0$ and $i_nd\omega = 0$. Therefore, by Proposition~\ref{indtod}, $d\omega = 0$,~i.~e. $\omega\in\mathcal H^p(M)$
\end{proof}

\begin{proposition}
\label{simmetricity}
Operator $\Lambda$ is symmetric with respect to $L^2$-inner product on $\Omega^p(M)$.
\end{proposition}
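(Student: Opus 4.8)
The plan is to reduce the symmetry of $\Lambda$ to the manifest symmetry of the Dirichlet energy bilinear form $(\phi_1,\phi_2)\mapsto\int_M\langle d\omega_1,d\omega_2\rangle\,dV$, where $\omega_j$ is any solution of~(\ref{DtN}) with boundary data $\phi_j$. Concretely, fix $\phi_1,\phi_2\in\Omega^p(\partial M)$ and choose $\omega_1\in\mL(\phi_1)$, $\omega_2\in\mL(\phi_2)$ (for definiteness one may take $\omega_j=\lambda(\phi_j)$). First I would unwind the definition $\Lambda\phi_1=i_nd\omega_1$ together with $\phi_2=i^*\omega_2$, so that
$$
\ll\Lambda\phi_1,\phi_2\rr=\int_{\partial M}\langle i^*\omega_2,i_nd\omega_1\rangle\,dA.
$$

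The main step is a single application of Green's formula~(\ref{Green}) with $\alpha=\omega_2$ and $\beta=d\omega_1$, namely
$$
\int_M\langle d\omega_2,d\omega_1\rangle\,dV=\int_M\langle\omega_2,\delta d\omega_1\rangle\,dV+\int_{\partial M}\langle i^*\omega_2,i_nd\omega_1\rangle\,dA.
$$
The crucial simplification is that the interior term on the right vanishes: since $\omega_1$ solves~(\ref{DtN}) we have $\Delta\omega_1=0$ and $\delta\omega_1=0$, hence $\delta d\omega_1=\Delta\omega_1-d\delta\omega_1=0$. This yields
$$
\ll\Lambda\phi_1,\phi_2\rr=\int_M\langle d\omega_1,d\omega_2\rangle\,dV,
$$
whose right-hand side is symmetric under the exchange $\phi_1\leftrightarrow\phi_2$ because the pointwise inner product is. Carrying out the identical computation with the roles reversed gives $\ll\Lambda\phi_2,\phi_1\rr=\int_M\langle d\omega_2,d\omega_1\rangle\,dV$, and comparing the two identities produces $\ll\Lambda\phi_1,\phi_2\rr=\ll\phi_1,\Lambda\phi_2\rr$, which is the asserted symmetry.

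There is no genuine obstacle here; the only points that deserve care are bookkeeping. One should record that the bilinear form $\int_M\langle d\omega_1,d\omega_2\rangle\,dV$ does not depend on the chosen representatives $\omega_j\in\mL(\phi_j)$: by Proposition~\ref{L} any two differ by an element of $\mH^p_D(M)$, which is closed and therefore annihilated by $d$ — the same reason $\Lambda$ is well defined in the first place. One must also match the boundary pairing appearing in~(\ref{Green}) with the integrated inner product $\ll\cdot,\cdot\rr$ on $\partial M$, reading $i_nd\omega_1$ as $\Lambda\phi_1$ and $i^*\omega_2$ as $\phi_2=\bt\omega_2$, both viewed as $p$-forms on the boundary. (The identity $\ll\Lambda\phi_1,\phi_2\rr=\int_M\langle d\omega_1,d\omega_2\rangle\,dV$ is also exactly the energy form underlying the Rayleigh quotient of Theorem~\ref{MainTheorem}; the passage from this formal symmetry to self-adjointness and discreteness on $c\mC^p(\partial M)$ is the separate content of Sections~\ref{compactness} and~\ref{minimax}.)
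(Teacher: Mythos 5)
Your proof is correct and follows essentially the same route as the paper: a single application of Green's formula~(\ref{Green}) to reduce $\ll\Lambda\phi_1,\phi_2\rr$ to the manifestly symmetric energy form $\int_M\langle d\omega_1,d\omega_2\rangle\,dV$, using $\delta d\omega_1=\Delta\omega_1-d\delta\omega_1=0$. The additional bookkeeping you record (independence of the choice of representative in $\mL(\phi_j)$ because the representatives differ by elements of $\mH^p_D(M)$) is exactly the well-definedness observation already made when $\Lambda$ was introduced, so nothing further is needed.
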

\begin{proof}
Let $\phi,\psi\in\Omega^p(\partial M)$, then Green's formula~(\ref{Green}) implies
$$
0 = \int_M\langle \delta d\lambda(\phi),\lambda(\psi)\rangle = \ll d\lambda(\phi),d\lambda(\psi)\rr - \int_{\partial M}\langle \phi,\Lambda\psi\rangle,
$$
i. e. $\ll d\lambda(\phi),d\lambda(\psi)\rr = \ll \phi,\Lambda\psi\rr$. Switching $\phi$ and $\psi$ in the computation above completes the proof.
\end{proof}

%We start our consideration with the following observation. If $\phi\in\Omega^p(\partial M)$ then $\xi = d\lambda(\phi) = d\mathcal L(\phi)$ solves the following equation,
%\begin{equation}
%\label{xi}
%\left\{
%   \begin{array}{rcl}
%	d\xi &=& 0,\\
%	\delta\xi &=& 0,\\
%	 i_n \xi &=& \Lambda\phi,\\
%          i^* \xi  &=& d\phi.\\
%   \end{array}
%\right.
%\end{equation}
%This observation has several corollaries which we collect below.
%
%\begin{proposition}
%One has the following identity,
%$$
%\delta\Lambda = 0,
%$$
%i.~e. the image of $\Lambda$ is contained in the space of coexact forms.
%\end{proposition}
%\begin{proof}
%Let $\phi\in\Omega^p(\partial M)$ then $\xi=d\lambda(\phi)$ is the solution of~(\ref{xi}). Therefore
%$$
%\delta\Lambda\phi = \delta i_n\xi = \pm i_n\delta\xi = 0.
%$$
%\end{proof}
\subsection{Image of $\Lambda$}
In this section we identify the image of $\Lambda$. From the previous section, one has the following sequence of inclusions
$$
c\mathcal E^p(\partial M) \subset (\ker\Lambda|_{\Omega^p(\partial M)})^{\perp} = (i^*\mathcal H^p(M))^\perp = i_n\mathcal H^{p+1}(M) \subset  c\mathcal C^p(\partial M).
$$
There are two natural ways to look at the domain of $\Lambda$. One can either set the domain to be $c\mathcal C^p(\partial M)$ which reflects intrinsic geometry of $\partial M$ or set it to be $(i^*\mathcal H^p(M))^\perp = i_n\mathcal H^{p+1}(M)$ which emphasises the role of $M$. A nice feature of the latter is that $\Lambda$ is strictly positive on that domain. However, in most of the article we adapt the former convention and consider $\Lambda$ as an operator on $c\mC^p(\partial M)$

From symmetricity it follows that $\im\Lambda\subset (i^*\mathcal H^p(M))^\perp$. In fact, this inclusion is an equality.

\begin{proposition}
\label{bijection}
Operator
\begin{equation}
\label{lambda-1}
\Lambda\colon i_n\mathcal H^{p+1}(M)\to i_n\mathcal H^{p+1}(M)
\end{equation}
is a bijection. 
\end{proposition}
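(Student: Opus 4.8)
The plan is to establish that $\Lambda\colon i_n\mathcal H^{p+1}(M)\to i_n\mathcal H^{p+1}(M)$ is a bijection by separately verifying injectivity and surjectivity, using the identification of the domain as the orthogonal complement of $\ker\Lambda$ together with the variational identity from Proposition~\ref{simmetricity}.

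For injectivity, I would argue as follows. Suppose $\phi\in i_n\mathcal H^{p+1}(M)$ satisfies $\Lambda\phi = 0$. By Proposition~\ref{kernel} we have $\ker\Lambda = i^*\mathcal H^p(M)$, so $\phi\in i^*\mathcal H^p(M)$. But we are working on the domain $i_n\mathcal H^{p+1}(M) = (i^*\mathcal H^p(M))^\perp$, the equality being Corollary~\ref{inH}. Hence $\phi$ lies in both a subspace and its orthogonal complement, forcing $\phi = 0$. Thus $\Lambda$ restricted to $i_n\mathcal H^{p+1}(M)$ is injective. Alternatively, one can use the positivity statement: from Proposition~\ref{simmetricity} one has $\ll\phi,\Lambda\phi\rr = \|d\lambda(\phi)\|^2_{L^2(M)}$, and if this vanishes then $d\lambda(\phi) = 0$, so $\lambda(\phi)\in\mathcal H^p(M)$ and $\phi = i^*\lambda(\phi)\in i^*\mathcal H^p(M)$, again yielding $\phi = 0$ on the complement.

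For surjectivity, the cleanest route is to invoke the spectral theorem for $\Lambda$ acting on $c\mathcal C^p(\partial M)$, established in Theorem~\ref{MainTheorem}: there $\Lambda$ is self-adjoint with compact resolvent and discrete spectrum, and its kernel within $c\mathcal C^p(\partial M)$ is $i^*\mathcal H^p_N(M)\cap c\mathcal C^p(\partial M)$. The orthogonal complement of the kernel inside $c\mathcal C^p(\partial M)$ decomposes into the closure of the span of eigenforms with nonzero eigenvalue, on which $\Lambda$ is a bijection onto that complement by the standard spectral decomposition. I would then check that this complement agrees with $i_n\mathcal H^{p+1}(M)$; since $\im\Lambda\subseteq(i^*\mathcal H^p(M))^\perp = i_n\mathcal H^{p+1}(M)$ by symmetricity (as noted just before the proposition) and $\Lambda$ is injective on this finite-codimension-free invariant subspace, a dimension/self-adjointness argument closes the gap.

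The main obstacle will be the surjectivity argument, specifically the passage from abstract invertibility of the self-adjoint operator on the orthogonal complement of its kernel to the concrete statement that $i_n\mathcal H^{p+1}(M)$ is exactly the range. Because $\Lambda$ is an unbounded operator, one must be careful to work at the level of the right function spaces: surjectivity as a map of smooth forms need not be immediate from spectral theory, which naturally delivers a bijection between suitable Sobolev completions. I expect the resolution to come from combining the self-adjointness of $\Lambda|_{c\mathcal C^p}$ with the fact that $i_n\mathcal H^{p+1}(M)$ is precisely the image of the complementary spectral projection, so that the eigenform expansion of any target element terminates appropriately or converges in the smooth topology by elliptic regularity. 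The inclusions displayed before the statement, namely $c\mathcal E^p(\partial M)\subset i_n\mathcal H^{p+1}(M)\subset c\mathcal C^p(\partial M)$, should be used to pin down that the orthogonal complement of $\ker\Lambda$ in $c\mathcal C^p(\partial M)$ is exactly $i_n\mathcal H^{p+1}(M)$, making the bijection statement a direct consequence of the spectral decomposition on that invariant subspace.
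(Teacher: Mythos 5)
Your injectivity argument is fine and is essentially what the paper takes for granted (the domain $i_n\mathcal H^{p+1}(M)=(i^*\mathcal H^p(M))^\perp$ meets $\ker\Lambda=i^*\mathcal H^p(M)$ only at $0$); the paper accordingly says only that ``it is sufficient to show surjectivity.'' The problem is your surjectivity argument, which has two defects. First, it is circular: you invoke Theorem~\ref{MainTheorem} (self-adjointness with compact resolvent and the resulting spectral decomposition), but in the paper's logical order that theorem is \emph{proved} in Sections~\ref{compactness}--\ref{minimax} using Proposition~\ref{bijection} --- the bijectivity is exactly what allows one to define $\Lambda^{-1}$ on $i_n\mathcal H^{p+1}(M)$ and then prove it is compact. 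You cannot use the spectral theorem for $\Lambda$ to establish the proposition that the spectral theory rests on. Second, even granting the spectral decomposition, you correctly identify the remaining obstacle --- that abstract invertibility on the $L^2$- (or $H^{1/2}$-) orthogonal complement of the kernel does not by itself give surjectivity onto the space of \emph{smooth} forms $i_n\mathcal H^{p+1}(M)$ --- but you do not resolve it; ``I expect the resolution to come from\dots'' is not a proof, and the elliptic regularity needed to upgrade an $L^2$ preimage to a smooth one is again part of the machinery (Theorem~\ref{compact}) that comes after this proposition.

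The paper's actual proof is elementary and constructive, and avoids all of this. Given $\psi\in i_n\mathcal H^{p+1}(M)$, by definition there is a harmonic field $\xi\in\mathcal H^{p+1}(M)$ with $i_n\xi=\psi$. Decompose $\xi=d\beta+\gamma$ with $\gamma\in\mH^{p+1}_N(M)$ (so $i_n\gamma=0$ and hence $i_nd\beta=\psi$), and replace $\beta$ by the coclosed component $\delta\tilde\beta$ of its Hodge decomposition without changing $d\beta$. Then $\beta$ is coclosed with $\Delta\beta=\delta d\beta=\delta\xi=0$, so $\beta\in\mathcal L(i^*\beta)$ and $\Lambda(i^*\beta)=i_nd\beta=\psi$. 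If you want to salvage your approach, the non-circular route is via Theorem~\ref{Sch} (Schwarz, Theorem~3.4.9) combined with Proposition~\ref{indtod}, which directly produces a smooth preimage for any $\psi\in(i^*\mathcal H^p(M))^\perp$; but the spectral-theoretic route as written must be abandoned.
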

\begin{proof}
It is sufficient to show surjectivity.
Let $\psi\in i_n\mathcal H^{p+1}(M)$. Then $\exists\,\xi\in\Omega^{p+1}(M)$ satisfying
\begin{equation}
\left\{
   \begin{array}{rcl}
	d\xi &=& 0,\\
	\delta\xi &=& 0,\\
	 i_n \xi &=& \psi.\\
   \end{array}
\right.
\end{equation}
According to Hodge decomposition for harmonic fields one can write $\xi = d\beta + \gamma$, where $\beta\in\Omega^p(M)$ and $\gamma\in\mH^{p+1}_N$. Moreover, $\beta$ can be chosen coclosed. Indeed, consider its Hodge decomposition $\beta = d\tilde\alpha + \delta\tilde\beta + \tilde\gamma$, where $d(d\tilde\alpha + \tilde\gamma) = 0$,~i.~e. $d\delta\tilde\beta = d\beta$. Thus, replacing $\beta$ with $\delta\tilde\beta$ does not change $\xi$. Therefore, $\beta$ solves the system
$$
\left\{
   \begin{array}{rcl}
	\Delta\beta &=& 0,\\
	\delta\beta &=& 0,\\
	 i_n d\beta &=& \psi,\\
   \end{array}
\right.
$$
~i.~e. $\Lambda i^*\beta = \psi$.
\end{proof}

In view of this proposition, in the next section we use $\Lambda^{-1}$ to denote the inverse of $\Lambda$ as an operator in~(\ref{lambda-1}).
Our next goal is to prove compactness of $\Lambda^{-1}$ as an operator on the Hilbert space $L^2(i_n\mathcal H^{p+1}(M))$ which together with simmetricity yields discreteness of the spectrum.
\section{Compactness of $\Lambda^{-1}$}

\label{compactness}
In order to prove the compactness of $\Lambda^{-1}$ we would like to use the following theorem from the book~\cite{Schwarz}.

\begin{theorem}[\cite{Schwarz}, Theorem~3.4.9]
\label{Sch}
For any form $\psi\in (i^*\mathcal H^p(M))^\perp$ there exists a unique solution $\omega$ to  
\begin{equation}
\label{inverse}
\left\{
   \begin{array}{rcl}
	\Delta\omega &=& 0,\\
	i^*\delta\omega &=& 0,\\
	 i_nd\omega &=& \psi,\\
   \end{array}
\right.
\end{equation}
orthogonal to the space $\mH^p(M)$. Moreover, that solution satisfies the following Sobolev bounds
\begin{equation}
\label{Sob1}
||\omega||_{H^{s+2}}\leqslant C||\psi||_{H^{s+1/2}}
\end{equation}
for any $s\in\mathbb{Z}_{\geqslant 0}$.
\end{theorem}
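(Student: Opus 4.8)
The plan is to decouple the two boundary conditions using Proposition~\ref{indtod} and then to build the solution out of the Hodge decomposition, exactly as in the proof of Proposition~\ref{bijection}. First I would observe that the equations $\Delta\omega = 0$ and $i^*\delta\omega = 0$ already force $\delta\omega = 0$ by Proposition~\ref{indtod}; hence the system is equivalent to $\delta\omega = 0$, $\delta d\omega = 0$, $i_n d\omega = \psi$. In other words $\xi := d\omega$ must be a harmonic field of degree $p+1$ with prescribed normal part $i_n\xi = \psi$. Corollary~\ref{inH} identifies the hypothesis $\psi\in(i^*\mathcal H^p(M))^\perp$ with $\psi\in i_n\mathcal H^{p+1}(M)$, which is precisely the solvability condition of Theorem~\ref{normal} in degree $p+1$ (with vanishing interior data); this produces a harmonic field $\xi\in\mathcal H^{p+1}(M)$ with $i_n\xi = \psi$. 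Writing its Hodge decomposition $\xi = d\beta + \gamma$ with $\gamma\in\mathcal H^{p+1}_N(M)$ and $\beta$ coclosed, as in Proposition~\ref{bijection}, one checks that $\omega := \beta$ satisfies $\delta\omega = 0$, $\Delta\omega = \delta d\beta = \delta(\xi-\gamma) = 0$, and $i_n d\omega = i_n(\xi - \gamma) = i_n\xi = \psi$ since $i_n\gamma = 0$. Subtracting its $\mathcal H^p(M)$-component gives a solution orthogonal to $\mathcal H^p(M)$.

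For uniqueness, suppose $\omega$ solves the homogeneous problem ($\psi = 0$). Proposition~\ref{indtod} gives first $\delta\omega = 0$, and then, from $\Delta\omega = 0$ together with $i_n d\omega = 0$, also $d\omega = 0$; hence $\omega\in\mathcal H^p(M)$. The normalisation $\omega\perp\mathcal H^p(M)$ then forces $\omega = 0$, so the solution orthogonal to $\mathcal H^p(M)$ is unique.

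It remains to establish the Sobolev bound~(\ref{Sob1}), and this is the genuinely analytic step and the main obstacle. I would verify that the pair of boundary operators $(i^*\delta,\, i_n d)$ satisfies the Lopatinski--Shapiro (complementing) condition for the Hodge--Laplace operator, so that $(\Delta, i^*\delta, i_n d)$ is an elliptic boundary value problem in the sense of Agmon--Douglis--Nirenberg; indeed, integrating $\langle\Delta\omega,\eta\rangle$ by parts via Green's formula~(\ref{Green}) shows that $i^*\delta\omega$ and $i_n d\omega$ are exactly the natural boundary data conjugate to $i_n\eta$ and $i^*\eta$, so the problem is self-adjoint with these conditions. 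The a priori estimate of that theory, combined with the finite dimensionality of $\mathcal H^p(M)$ and the orthogonality normalisation to kill the kernel, then yields $\|\omega\|_{H^{s+2}}\leqslant C\|\psi\|_{H^{s+1/2}}$. The exponents are consistent, since passing from $\omega$ to $\psi = i_n d\omega$ costs one derivative from $d$ and a further $1/2$ from the boundary trace, matching $H^{s+2}(M)\to H^{s+1/2}(\partial M)$. Alternatively, one can bypass the explicit complementing-condition check by tracking regularity through the construction above, using the boundedness on the Sobolev scale of the solution operator of Theorem~\ref{normal} and of the Hodge projections. Either way, existence and uniqueness are soft consequences of the Hodge decomposition and Proposition~\ref{indtod}, whereas the sharp Sobolev estimate is where the full elliptic boundary-value-problem machinery underlying~\cite{Schwarz} is required.
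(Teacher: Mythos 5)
Your proposal is correct in its essentials, but note that the paper does not prove Theorem~\ref{Sch} at all: it is quoted verbatim from Schwarz's book (Theorem~3.4.9), so any argument is necessarily a different route. What you add is a genuinely self-contained derivation of the \emph{existence and uniqueness} part from results the paper already imports: Proposition~\ref{indtod} to reduce to $\delta\omega=0$, Corollary~\ref{inH} to translate $\psi\in(i^*\mathcal H^p(M))^\perp$ into the solvability condition $\psi\in i_n\mathcal H^{p+1}(M)$ of Theorem~\ref{normal}, and the coclosed-primitive construction of Proposition~\ref{bijection} (your argument is essentially that proof read in reverse, and it is correct, including the observation that subtracting the $\mathcal H^p(M)$-component preserves all three equations). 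The only caveat concerns the Sobolev bound~(\ref{Sob1}). Your remark that $i^*\delta\omega$ and $i_nd\omega$ are the natural boundary data of the form $\ll d\omega,d\eta\rr+\ll\delta\omega,\delta\eta\rr$ is true, but it does not by itself give the complementing condition: that implication requires coercivity of the form on $H^1\Omega^p(M)$, and Gaffney's inequality fails there without a tangential or normal boundary condition --- the infinite-dimensional space $\mathcal H^p(M)$, on which the $L^2$- and $H^1$-norms are not equivalent, witnesses the degeneracy. One must either check the Lopatinski--Shapiro condition by a direct symbol computation or track regularity through Theorem~\ref{normal} and the Hodge projections, which is precisely what the citation to~\cite{Schwarz} supplies; so your proof reduces, but does not eliminate, the dependence on that reference. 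This is consistent with how the paper itself handles the one estimate it needs outside the quoted range ($s=-1$ in Theorem~\ref{Sch'}): it works variationally on $H^1(\mathcal H^p(M)^\perp)$, where coercivity does hold by Schwarz's Lemma~2.4.10(i).
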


However, for our purposes we need inequality~(\ref{Sob1}) for $s=-1$ which is not guaranteed by the theorem above.

\begin{theorem}
\label{Sch'}
For the solution of equation~(\ref{inverse}) one has the following bound
\begin{equation}
\label{Sob2}
||\omega||_{H^1}\leqslant C||\psi||_{H^{-1/2}}.
\end{equation}
\end{theorem}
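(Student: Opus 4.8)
The plan is to combine the energy identity for the boundary-value problem~(\ref{inverse}) with a Gaffney-type inequality and then absorb a trace term. Throughout I would first assume $\psi$ is smooth, so that Theorem~\ref{Sch} guarantees a solution $\omega\in H^2\Omega^p(M)$ for which all the integrations by parts and trace maps below are legitimate; the final bound~(\ref{Sob2}) then extends to all $\psi\in H^{-1/2}(\partial M)$ by density of smooth forms and the resulting continuity of the solution operator $\psi\mapsto\omega$.

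The first, and key, structural observation is that the solution is \emph{coclosed} and satisfies the \emph{Neumann} boundary condition. Indeed, since $\Delta\omega=0$ and $i^*\delta\omega=0$, Proposition~\ref{indtod} gives $\delta\omega=0$. As $\omega\perp\mathcal H^p(M)$, the Hodge--Morrey--Friedrichs decomposition lets me write $\omega=d\alpha+\delta\beta$ with $\alpha\in\Omega^{p-1}_D(M)$ and $\beta\in\Omega^{p+1}_N(M)$; the condition $\delta\omega=0$ forces $\delta d\alpha=0$, so by Green's formula~(\ref{Green}) and $i^*\alpha=0$ one gets $||d\alpha||^2_{L^2}=\ll\alpha,\delta d\alpha\rr=0$, whence $\omega=\delta\beta$. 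Since $i_n\beta=0$, Proposition~\ref{inbn} yields $\bn\omega=\bn\delta\beta=\delta\bn\beta=0$, i.e. $i_n\omega=0$.

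With $\delta\omega=0$ and $\Delta\omega=0$ I then have $\delta d\omega=0$, and Green's formula applied to the identity $\ll\delta d\omega,\omega\rr=0$ produces the energy identity
\begin{equation*}
||d\omega||^2_{L^2(M)}=\int_{\partial M}\langle i^*\omega, i_nd\omega\rangle=\int_{\partial M}\langle i^*\omega,\psi\rangle.
\end{equation*}
The right-hand side I would estimate by the $H^{1/2}$--$H^{-1/2}$ duality pairing on $\partial M$ followed by the trace theorem, giving $||d\omega||^2_{L^2}\leqslant ||i^*\omega||_{H^{1/2}(\partial M)}\,||\psi||_{H^{-1/2}(\partial M)}\leqslant C||\omega||_{H^1(M)}\,||\psi||_{H^{-1/2}(\partial M)}$.

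It remains to control the full $H^1$-norm of $\omega$ by $||d\omega||_{L^2}$. Because $i_n\omega=0$, the Gaffney inequality (Schwarz~\cite{Schwarz}) gives $||\omega||^2_{H^1}\leqslant C(||d\omega||^2_{L^2}+||\delta\omega||^2_{L^2}+||\omega||^2_{L^2})=C(||d\omega||^2_{L^2}+||\omega||^2_{L^2})$; and since $\omega$ is coclosed, satisfies $i_n\omega=0$ and is orthogonal to $\mathcal H^p(M)$, a standard compactness-and-contradiction argument (using Rellich together with Gaffney) yields a Poincaré-type bound $||\omega||_{L^2}\leqslant C||d\omega||_{L^2}$. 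Combining these gives $||\omega||^2_{H^1}\leqslant C||d\omega||^2_{L^2}$, and feeding this into the energy estimate above yields $||\omega||^2_{H^1}\leqslant C||\omega||_{H^1}\,||\psi||_{H^{-1/2}}$; dividing by $||\omega||_{H^1}$ proves~(\ref{Sob2}). I expect the main obstacle to be the structural first step: recognising that the data $i_nd\omega=\psi$, together with $i^*\delta\omega=0$ and $\omega\perp\mathcal H^p(M)$, secretly encodes the Neumann condition $i_n\omega=0$, which is exactly what makes both Gaffney's inequality and the Poincaré inequality available.
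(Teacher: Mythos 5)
Your argument is correct, but it takes a genuinely different route from the paper. The paper never works with the strong solution: it recasts~(\ref{inverse}) as the weak problem $\int_M(\langle d\omega,d\eta\rangle+\langle\delta\omega,\delta\eta\rangle)=(\psi,i^*\eta)$ posed on $H^1(\mathcal H^p(M)^\perp)$, cites Lemma 2.4.10(i) of~\cite{Schwarz} for the fact that the left-hand side is a scalar product equivalent to the $H^1$-scalar product on that space, bounds the right-hand side by $C||\psi||_{H^{-1/2}}||\eta||_{H^1}$ via the trace theorem, and concludes with the Riesz representation theorem, so that existence of the (weak) solution and the bound~(\ref{Sob2}) are obtained simultaneously for every admissible $\psi\in H^{-1/2}$. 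You instead prove an a priori estimate for the strong solution and extend by density. Your key structural step --- that the normalized solution is coclosed and, being orthogonal to $\mathcal H^p(M)$, lies in $\delta\Omega^{p+1}_N(M)$ and hence satisfies $i_n\omega=0$ --- is correct and does not appear in the paper; it is what makes Gaffney's inequality and the Neumann--Poincar\'e inequality available, and together these reproduce exactly the coercivity that the paper imports wholesale from Schwarz (without any boundary condition, orthogonality to $\mathcal H^p(M)$ alone sufficing there). The trade-offs: your route requires Theorem~\ref{Sch} as input, and the closing density step is slightly under-specified --- for $\psi$ merely in $H^{-1/2}$ one must still say what ``the solution'' of~(\ref{inverse}) means (the limit of strong solutions along smooth approximations orthogonal to $i^*\mathcal H^p(M)$, or equivalently the weak solution), and verify that such approximations exist; the paper's variational formulation settles both points at once. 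For the way~(\ref{Sob2}) is actually used in Theorem~\ref{compact}, where $\psi\in L^2$, your version is sufficient, and it has the added benefit of exhibiting the Neumann condition $i_n\omega=0$ hidden in the data of~(\ref{inverse}).
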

This theorem is proved below. For now assume that inequality~(\ref{Sob2}) holds.

\begin{theorem}
\label{compact}
Operator 
$$
\Lambda^{-1}\colon L^2((i^*\mathcal H^p(M))^\perp)\to L^2((i^*\mathcal H^p(M))^\perp)
$$
is compact. Moreover, it is a bounded operator from space $H^{s+1/2}((i^*\mathcal H^p(M))^\perp)$ to space $H^{s-1/2}((i^*\mathcal H^p(M))^\perp)$ for all $s\in\mathbb{Z}_{\geqslant 0}$.
\end{theorem}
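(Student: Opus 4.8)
The plan is to reduce everything to the Sobolev estimates already isolated in Theorem~\ref{Sch'} and Theorem~\ref{Sch}, combined with the boundedness of the trace map and the Rellich compactness lemma on the closed manifold $\partial M$. First I would make the action of $\Lambda^{-1}$ explicit: by Proposition~\ref{bijection} and the identification $i_n\mH^{p+1}(M)=(i^*\mH^p(M))^\perp$, for $\psi\in(i^*\mH^p(M))^\perp$ the form $\Lambda^{-1}\psi$ equals $i^*\omega$, where $\omega$ is the unique solution of~(\ref{inverse}) orthogonal to $\mH^p(M)$ (recall that $i^*\delta\omega=0$ together with $\Delta\omega=0$ forces $\delta\omega=0$ by Proposition~\ref{indtod}, so $\omega$ is exactly the extension used to define $\Lambda$). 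Thus $\Lambda^{-1}$ factors as $\psi\mapsto\omega\mapsto i^*\omega$, and it suffices to track Sobolev norms along this composition. Note also that $\Lambda^{-1}$ preserves the closed subspace $(i^*\mH^p(M))^\perp$, so all maps below may be regarded as acting between the corresponding closed subspaces.

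For compactness on $L^2$, I would argue as follows. On the closed manifold $\partial M$ the inclusion $L^2=H^0\hookrightarrow H^{-1/2}$ is bounded, so $\|\psi\|_{H^{-1/2}}\leqslant C\|\psi\|_{L^2}$. Applying the bound~(\ref{Sob2}) from Theorem~\ref{Sch'} gives $\|\omega\|_{H^1(M)}\leqslant C\|\psi\|_{H^{-1/2}}\leqslant C'\|\psi\|_{L^2(\partial M)}$. Since the trace map $i^*\colon H^1\Omega^p(M)\to H^{1/2}\Omega^p(\partial M)$ is bounded, we obtain $\|\Lambda^{-1}\psi\|_{H^{1/2}(\partial M)}\leqslant C''\|\psi\|_{L^2(\partial M)}$; that is, $\Lambda^{-1}$ maps $L^2((i^*\mH^p(M))^\perp)$ boundedly into $H^{1/2}(\partial M)$, with image inside $(i^*\mH^p(M))^\perp$. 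Composing with the compact Rellich embedding $H^{1/2}(\partial M)\hookrightarrow L^2(\partial M)$ then yields compactness of $\Lambda^{-1}$ as an operator on $L^2((i^*\mH^p(M))^\perp)$.

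The ``moreover'' part is the same argument one step up the Sobolev scale, using the full strength of Theorem~\ref{Sch}. For $s\in\mathbb{Z}_{\geqslant 0}$ and $\psi\in H^{s+1/2}((i^*\mH^p(M))^\perp)$, estimate~(\ref{Sob1}) gives $\|\omega\|_{H^{s+2}(M)}\leqslant C\|\psi\|_{H^{s+1/2}(\partial M)}$, and boundedness of the trace map $i^*\colon H^{s+2}\Omega^p(M)\to H^{s+3/2}\Omega^p(\partial M)$ then yields $\|\Lambda^{-1}\psi\|_{H^{s+3/2}(\partial M)}\leqslant C'\|\psi\|_{H^{s+1/2}(\partial M)}$. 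Since $H^{s+3/2}(\partial M)\hookrightarrow H^{s-1/2}(\partial M)$ continuously, this in particular proves the asserted boundedness $H^{s+1/2}\to H^{s-1/2}$ (and in fact exhibits the expected smoothing of $\Lambda^{-1}$ as an operator of order $-1$).

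The genuine analytic difficulty has been entirely absorbed into Theorem~\ref{Sch'}: the estimate~(\ref{Sob2}) at the endpoint $s=-1$ is precisely the ingredient not covered by the regularity statement~(\ref{Sob1}) of Schwarz, and it is what allows us to land in $H^{1/2}$ rather than merely $H^{3/2}$ starting from an $L^2$ datum, the full derivative being exactly what the Rellich embedding consumes. Once~(\ref{Sob2}) is granted, the remaining steps (trace theorem, Rellich lemma, and the bookkeeping that the relevant subspaces are closed and preserved by $\Lambda^{-1}$) are routine. The only points requiring a little care are verifying that $\Lambda^{-1}$ indeed maps into the closed subspace $(i^*\mH^p(M))^\perp$ so that compactness is claimed between the correct spaces, and checking the elementary continuous inclusions $L^2\hookrightarrow H^{-1/2}$ and $H^{s+3/2}\hookrightarrow H^{s-1/2}$ on the compact boundary.
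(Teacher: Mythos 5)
Your argument is essentially the paper's: reduce to the a priori bound~(\ref{Sob2}) of Theorem~\ref{Sch'}, apply the trace theorem, and get compactness from the Rellich embedding $H^{1/2}(\partial M)\hookrightarrow L^2(\partial M)$, with~(\ref{Sob1}) handling the higher Sobolev scale. The one imprecision is the identification $\Lambda^{-1}\psi=i^*\omega$: the normalisation $\omega\perp_{L^2(M)}\mH^p(M)$ does not force $i^*\omega\perp_{L^2(\partial M)}i^*\mH^p(M)$, so in general $\Lambda^{-1}\psi=P(i^*\omega)$, where $P$ is the $L^2$-orthogonal projection onto $(i^*\mH^p(M))^\perp$, and one must check that $P$ is bounded on $H^{s+1/2}$ --- this is precisely the ``split subspace'' observation in the paper's proof (equivalently, $P$ differs from the identity by a finite-rank operator with smooth kernel, since $\ker\Lambda=i^*\mH^p(M)$ consists of smooth forms). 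With that correction inserted, your estimates go through unchanged.
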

\begin{proof}
Note that $\Lambda^{-1}(\psi) = P(i^*\omega)$, where $\omega$ is a solution to~(\ref{inverse}) and $P$ is an $L^2$-orthogonal projection from $L^2\Omega^p(\partial M)$ onto $L^2((i^*\mathcal H^p(M))^\perp)$. Since $H^s(\im\delta)\subset H^s((i^*\mathcal H^p(M))^\perp)\subset H^s(\ker\delta)$ and $H^s(\im\delta)\subset H^s(\ker\delta)$ is a finite codimension closed subspace in a closed space for any $s$ (Hodge decomposition theorem for closed manifolds), then $H^{1/2}((i^*\mathcal H^p(M))^\perp)$ is a split subspace. Thus, using~(\ref{Sob2}) and trace formula one has
$$
||\Lambda^{-1}(\psi)||_{H^{1/2}}\leqslant C||i^*\omega||_{H^{1/2}}\leqslant C'||\omega||_{H^1}\leqslant C''||\psi||_{H^{-1/2}}.
$$ 
Bounds for $H^{s+1/2}$ norms with natural $s$ are proved in similar fashion using inequality~(\ref{Sob1}). Compactness of $\Lambda^{-1}$ follows from inclusion $L^2\subset H^{-1/2}$ and compactness of $H^{1/2}\hookrightarrow L^2$.
\end{proof}

This completes the proof of the first part of Theorem~\ref{MainTheorem}. Note that Sobolev bounds for $\Lambda^{-1}$ imply smoothness of $\Lambda$-eigenforms.

\subsection{Proof of Theorem~\ref{Sch'}}

First, let us provide a weak formulation of equation~(\ref{inverse}): for any $\psi\in H^{-1/2}(\Omega^p(\partial M)):=(H^{1/2}(\Omega^p(\partial M)))^*$ such that $(\psi,\cdot)$ is identically zero on $i^*\mathcal H^p(M)$ find $\omega\in H^1(\mH^p(M)^\perp)$ such that for any $\eta\in H^1(\Omega^p(M))$ one has
\begin{equation}
\label{weak}
\int_M(\langle d\omega,d\eta\rangle + \langle \delta\omega,\delta\eta\rangle) = (\psi,i^*\eta),
\end{equation}
%where in the right hand side the brackets denote pairing of an element with an element of the dual space and $P$ is projection $H^{1/2}(i^*(\mathcal H^p(M)^{\perp}))\to H^1/2(i_n\mathcal H^p(M))$. Note that $i^*\eta$ is not necessarily in $H^{1/2}(i_n\mathcal H^p(M))$, but the pairing still make sense if we consider both forms as elements of bigger spaces
where round brackets denote duality pairing. 

First, note that both sides of equation are invariant under transformation $\eta\mapsto\eta + \xi$, where $\xi\in\mathcal H^p(M)$. Therefore, without loss of generality $\eta\perp_{L^2}\mathcal H^p(M)$. By Lemma 2.4.10.(i) in~\cite{Schwarz} the left hand side of equation~(\ref{weak}) defines a scalar product on $H^1(\mathcal H^p(M)^{\perp})$ equivalent to the usual $H^1$-scalar product. Moreover, right hand side is a bounded linear functional on $H^1(\Omega^p(M))$ as by trace formula
$$
|(\psi,i^*\eta)|\leqslant ||\psi||_{H^{-1/2}}||i^*\eta||_{H^{1/2}}\leqslant C||\psi||_{H^{-1/2}}||\eta||_{H^1}.
$$
Thus, by Riesz representation theorem, there exists solution $\omega$ to~(\ref{weak}) satisfying bound~(\ref{Sob2}).

Easy application of Green's formula shows that if solution $\omega$ is $H^2$ then it is a strong solution in the sense of Theorem~\ref{Sch} and $\psi = i_nd\omega\in H^{1/2}(\Omega^p(\partial M))$.

\section{Min-max principle}
\label{minimax}
%Let us summarise the facts we have proved so far.
%\begin{theorem}
%The Dirihchet-to-Neumann operator $\Lambda$ has discrete spectrum. Its kernel is the infinite dimensional space $i^*\mathcal H^p(M)$ of boundary traces of harmonic forms. The restriction of $\Lambda$ to the space $c\mC^p(\partial M)$ of coclosed forms has kernel of dimension $\dim \mathrm{im}\{i^*\colon H^p(M)\to H^p(\partial M)\}$ ({\bf Conflicting notations: H for Sobolev spaces and H for cohomology, probably switch to W for Sobolev}). 
%\end{theorem}
%In view of this theorem, we denote by $\sigma_k^{(p)}$, $k\geqslant 1$ the eigenvalues of $\Lambda$ on the space $c\mC^p(\partial M)$ counted with multiplicity.

The goal of this section is to prove the second half of Theorem~\ref{MainTheorem}, i.~e. to obtain a min-max characterisation of eigenvalues similar to the one for Steklov eigenvalues on functions. By Proposition~\ref{simmetricity}, for $\omega_1\in\mL(\phi_1)$, $\omega_2\in\mL(\phi_2)$ one has
$$
\int\limits_{\partial M} \langle \Lambda\phi_1,\phi_2\rangle = \int\limits_M \langle d\omega_1,d\omega_2 \rangle.
$$
This equality suggests that the Rayleigh quotient for operator $\Lambda$ is a ratio of squares of $L^2$-norms of $d\omega_i$ and $\phi_i$. The following proposition makes it possible to omit the condition $\omega_i\in\mL(\phi_i)$.

\begin{proposition}
\label{minL}
Any form $\omega$ in the space $\mL(\phi)$ minimises the quadratic form $Q(\omega) = ||d\omega||^2_{L^2}$ in the class of $p$-forms $\rho$ on $M$ satisfying $i^*\rho = \phi$.
\end{proposition}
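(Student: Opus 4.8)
The plan is to use a first-variation (orthogonality) argument, exploiting that all admissible competitors share the same tangential boundary data. Given $\omega\in\mL(\phi)$ and an arbitrary $p$-form $\rho$ on $M$ with $i^*\rho=\phi$, I would set $\tau=\rho-\omega$, so that $i^*\tau=\phi-\phi=0$, i.e.\ $\tau\in\Omega^p_D(M)$. Expanding the quadratic form gives
\[
Q(\rho)=\|d\omega\|^2_{L^2}+2\ll d\omega,d\tau\rr+\|d\tau\|^2_{L^2},
\]
so the claim reduces to showing that the cross term $\ll d\omega,d\tau\rr$ vanishes.

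To eliminate the cross term I would apply Green's formula~(\ref{Green}) with $\alpha=\tau$ and $\beta=d\omega$, obtaining
\[
\ll d\tau,d\omega\rr=\ll\tau,\delta d\omega\rr+\int_{\partial M}i^*\tau\wedge *\bn d\omega.
\]
The boundary integral vanishes because $i^*\tau=0$. For the interior term I would invoke the defining equations of $\mL(\phi)$: since $\delta\omega=0$ we have $d\delta\omega=0$ and hence $\Delta\omega=\delta d\omega$, so $\Delta\omega=0$ forces $\delta d\omega=0$. Both terms therefore vanish, giving $\ll d\omega,d\tau\rr=0$ and thus $Q(\rho)=\|d\omega\|^2_{L^2}+\|d\tau\|^2_{L^2}\geqslant Q(\omega)$, with equality precisely when $d\tau=0$.

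The argument is essentially routine once the splitting $\rho=\omega+\tau$ is made; the only point requiring genuine care is the function-space setting. Strictly speaking one must check that these manipulations are justified for competitors $\rho$ in the natural energy class of $H^1$ forms with the prescribed trace, so that Green's formula applies and the boundary pairing makes sense; this is where I would be most careful, invoking the trace theorem to guarantee that $i^*\rho$ is well defined. I would also note that the conclusion is consistent with $\mL(\phi)$ being an affine space over $\mH^p_D(M)$: since any element of $\mH^p_D(M)$ is a closed harmonic field, any two elements of $\mL(\phi)$ differ by a $d$-closed form and therefore share the same value of $Q$, so each of them is indeed a minimizer.
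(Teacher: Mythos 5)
Your proof is correct and follows essentially the same route as the paper: write $\rho=\omega+\tau$ with $i^*\tau=0$, and kill the cross term $\ll d\omega,d\tau\rr$ via Green's formula, using $i^*\tau=0$ for the boundary term and $\delta d\omega=\Delta\omega-d\delta\omega=0$ (i.e.\ $d\omega\in\mH^{p+1}(M)$) for the interior term. The remarks on the $H^1$ energy class and on $Q$ being constant on $\mL(\phi)$ match the paper's (brief) treatment as well.
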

\begin{proof}
First, note that $Q(\omega)$ is constant on $\mL(\phi)$ as $d\mathcal H^p_D(M) = 0$. Thus, it is sufficient to prove that for any $\rho$ with $i^*\rho = \phi$ one has $Q(\rho)\geqslant Q(\omega)$ for some $\omega\in\mL(\phi)$.

Let $\rho$ and $\omega$ be as above. Then $d\rho = d(\rho - \omega) + d\omega$, where $i^*(\rho - \omega) = 0$ and $d\omega\in\mH^p(M)$. Therefore, by Green's formula $d(\rho-\omega)\perp d\omega$ and $Q(\rho) = Q(\rho-\omega) + Q(\omega)\geqslant Q(\omega)$.
\end{proof}

\begin{theorem}[Min-max principle] 
The $k$-th eigenvalue $\sigma_k^{(p)}$ of $\Lambda\colon c\mC^p(\partial M)\to c\mC^p(\partial M)$ can be characterised in the following way
$$
\sigma_k^{(p)} = \max_E \min_{\phi\perp E;\, i^*\hat\phi = \phi} \frac{||d\hat\phi||^2_{L^2(M)}}{||\phi||^2_{L^{2}(\partial M)}},
$$
where $E$ runs over all $(k-1)$-dimensional subspaces of $c\mC^p(\partial M)$. Maximum is achieved for $E = V_{k-1}$, where $V_{k-1}$ is spanned by the first $(k-1)$ eigenforms, $\phi$ being the $k$-th eigenform and $\hat\phi\in\mathcal L(\phi)$. In particular,
$$
\sigma_k^{(p)}\leqslant  \frac{||d\hat\phi||^2_{L^2(M)}}{||\phi||^2_{L^{2}(\partial M)}}
$$
for any $\phi\perp V_{k-1}$ and any $\hat\phi$ satisfying $i^*\hat\phi = \phi$.
%
%Alternatively,
%$$
%\sigma_k^{(p)} = \min_{F}\max_{\hat\phi\in F} \frac{||d\hat\phi||^2_{L^2(M)}}{||i^*\hat\phi||^2_{L^{2}(\partial M)}},
%$$
%where $F$ runs over all subspaces of $\Omega^p(M)$ such that $i^*F\subset c\mC^p(\partial M)$ and $i^*$ is a bijection on $F$.
\end{theorem}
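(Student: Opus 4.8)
The plan is to reduce the two-fold minimisation in the statement to the Rayleigh quotient of the quadratic form of $\Lambda$ and then invoke the classical Courant--Fischer min-max for a self-adjoint operator with discrete spectrum. The bridge between the analytic extension problem and the operator $\Lambda$ is already contained in Propositions~\ref{minL} and~\ref{simmetricity}, so the bulk of the argument is organisational rather than computational.

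First I would fix $\phi\in c\mC^p(\partial M)$ and carry out the inner minimisation over extensions $\hat\phi$ with $i^*\hat\phi=\phi$. By Proposition~\ref{L} the affine space $\mL(\phi)$ is nonempty, and by Proposition~\ref{minL} every $\omega\in\mL(\phi)$ minimises the functional $\hat\phi\mapsto\|d\hat\phi\|^2_{L^2(M)}$ over the class $\{\hat\phi:i^*\hat\phi=\phi\}$; moreover $\|d\omega\|^2_{L^2(M)}$ is constant on $\mL(\phi)$ because $d\mH^p_D(M)=0$. Hence
\[
\min_{i^*\hat\phi=\phi}\|d\hat\phi\|^2_{L^2(M)}=\|d\lambda(\phi)\|^2_{L^2(M)},
\]
and Proposition~\ref{simmetricity}, applied with both arguments equal to $\phi$, identifies this with the quadratic form $\ll\Lambda\phi,\phi\rr$. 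Consequently the inner minimum of the Rayleigh quotient equals $R(\phi):=\ll\Lambda\phi,\phi\rr/\|\phi\|^2_{L^2(\partial M)}$, and the asserted formula collapses to
\[
\sigma_k^{(p)}=\max_{\dim E=k-1}\ \min_{0\neq\phi\perp E}R(\phi),
\]
where $E$ ranges over $(k-1)$-dimensional subspaces and $\phi$ over nonzero elements of $c\mC^p(\partial M)$.

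What remains is the standard spectral step. By Theorem~\ref{compact} the operator $\Lambda$ is self-adjoint and positive on the Hilbert space $L^2(c\mC^p(\partial M))$ with compact resolvent, so its spectrum is the discrete sequence $0\leqslant\sigma_1^{(p)}\leqslant\sigma_2^{(p)}\leqslant\cdots\nearrow\infty$ and there is a corresponding orthonormal basis of eigenforms $\{\phi_k\}$, smooth by the Sobolev bounds. I would then prove the min-max by the usual two inequalities. Expanding $\phi=\sum_j c_j\phi_j$ gives $\ll\Lambda\phi,\phi\rr=\sum_j\sigma_j^{(p)}c_j^2$. For the upper bound on the outer maximum, take $E=V_{k-1}=\mathrm{span}(\phi_1,\dots,\phi_{k-1})$ and test with $\phi=\phi_k$, which yields $R(\phi_k)=\sigma_k^{(p)}$. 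For the lower bound, note that any $(k-1)$-dimensional $E$ meets $\mathrm{span}(\phi_1,\dots,\phi_k)$ in a nonzero vector lying in $E^\perp$, on which the diagonalisation gives $R(\phi)\leqslant\sigma_k^{(p)}$; hence every inner minimum is at most $\sigma_k^{(p)}$, and the maximum over $E=V_{k-1}$ attains it.

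The one point requiring care --- and the step I expect to be the genuine obstacle --- is the matching of function spaces in the variational problem. The eigenforms are smooth, but the admissible test forms a priori live only in the form domain of $\Lambda$, namely the $H^{1/2}$-completion of $c\mC^p(\partial M)$, while the identity $\ll\Lambda\phi,\phi\rr=\|d\lambda(\phi)\|^2_{L^2(M)}$ was derived for smooth $\phi$; one must check that it persists on the whole form domain and that restricting the competition to this domain does not alter the extremal values. This is controlled by the compact-resolvent structure: since $\Lambda^{-1}$ maps $L^2$ boundedly into $H^{1/2}$ (the chain of estimates in the proof of Theorem~\ref{compact}), the quadratic form is closed with the stated domain, the expansion $\ll\Lambda\phi,\phi\rr=\sum_j\sigma_j^{(p)}c_j^2$ holds by monotone convergence for every $\phi$ in the form domain, and forms outside it contribute $+\infty$ and so are harmless in a minimisation. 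With this density and closedness in place, the elementary two-inequality argument above becomes rigorous and completes the proof.
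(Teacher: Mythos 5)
Your proposal is correct and follows essentially the same route as the paper: reduce the inner minimisation over extensions to $\|d\lambda(\phi)\|^2_{L^2(M)}=\ll\Lambda\phi,\phi\rr$ via Propositions~\ref{minL} and~\ref{simmetricity}, then apply the Courant--Fischer min-max for the positive self-adjoint operator with compact resolvent, handling the passage between the $H^{1/2}$ form domain and smooth forms by the regularity estimates of Theorem~\ref{compact}. The only difference is that you write out the standard two-inequality spectral argument which the paper simply cites.
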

\begin{proof}
Application of min-max theorem for positive self-adjoint operator $\Lambda$ guarantees that 
$$
\sigma_k^{(p)} = \max_{E}\min_{\phi\perp E} \frac{||d\lambda(\phi)||^2_{L^2(M)}}{||\phi||^2_{L^{2}(\partial M)}},
$$
where $E$ runs over all $(k-1)$-dimensional subspaces of $H^{1/2}(c\mC^p(\partial M))$.
Elliptic regularity estimates of Theorem~\ref{compact} guarantee that it is sufficient to consider $E\subset c\mC^p(\partial M)$. Therefore, the min-max formula of the theorem follows from Proposition~\ref{minL}.
%The second formula is obtained from the first by setting $i^*F = E$.
\end{proof}

%\begin{corollary} [{\bf Do I need it?}]
%\label{bharm}
%If a form $\psi\in\mH^p(\partial)$ satisfies $\psi\perp i_n\mathcal E\mathcal H^p_D(M)$ and $\psi\perp i^*c\mathcal E\mathcal H^p_N(M)$ then $\psi = 0$.
%\end{corollary}
%\begin{proof}
%The result follows from Corollaries~\ref{iH},~\ref{inH} and theorem above.
%\end{proof}

\section{Proof of Theorem~\ref{CompTheorem}}
\label{CT}

Let us remind the reader a definition of operator $L$ defined by Raulot and Savo in~\cite{RS}. By~\cite{Schwarz} Theorem 3.4.10, for any $\phi\in\Omega^p(\partial M)$ there exists unique $\hat\omega\in\Omega^p(M)$ satisfying 
\begin{equation}
\label{RS}
\left\{
   \begin{array}{rcl}
	\Delta\hat\omega &=& 0,\\
	i_n\hat\omega &=& 0,\\
	i^*\hat\omega &=& \phi.\\
   \end{array}
\right.
\end{equation}
Then $L\phi$ is defined to be $i_nd\hat\omega$. In~\cite{RS} the authors demonstrated that $L$ is an elliptic, self-adjoint pseudodifferential operator of first order. Therefore, its spectrum consists of eigenvalues which will be denoted by 
$$
0\leqslant\mu_1^{(p)}\leqslant\mu_2^{(p)}\leqslant\ldots
$$
The kernel of this map is the space $i^*\mathcal H^p_N(M)$. Eigenvalues $\mu_k^{(p)}$ have min-max characterisation which is the subject of the next theorem.
\begin{theorem}[Min-max principle~\cite{RS}]
The $k$-th eigenvalue $\mu_k^{(p)}$ can be computed in the following way
$$
\mu_k^{(p)} = \max_E\min_{\phi\perp E;\, i^*\hat\phi = \phi, \, i_n\hat\phi = 0}\frac{||d\hat\phi||^2_{L^2(M)} + ||\delta\hat\phi||^2_{L^2(M)}}{||\phi||^2_{L^2(\partial M)}},
$$
where $E$ runs over $(k-1)$-dimensional subspaces of $\Omega^p(\partial M)$. Maximum is achieved for $E=V_{k-1}$, where $V_k$ is spanned by the first $(k-1)$-eigenforms, $\phi$ being the $k$-th eigenform and $\hat\phi$ is a solution to~(\ref{RS}). In particular,
$$
\mu_k^{(p)}\leqslant \frac{||d\hat\phi||^2_{L^2(M)} + ||\delta\hat\phi||^2_{L^2(M)}}{||\phi||^2_{L^2(\partial M)}}
$$
for any $\phi\perp V_{k-1}$ and $i^*\hat\phi = \phi, \, i_n\hat\phi = 0$.
\end{theorem}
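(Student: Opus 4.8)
The plan is to deduce the formula from the Courant--Fischer min-max theorem applied to the self-adjoint operator $L$, after rewriting the bilinear form $\ll L\phi,\phi\rr$ as the numerator appearing in the statement. There are three ingredients: a quadratic-form identity expressing $\ll L\phi,\phi\rr$ as $||d\hat\omega||^2_{L^2(M)} + ||\delta\hat\omega||^2_{L^2(M)}$ for the solution $\hat\omega$ of~(\ref{RS}); a Dirichlet-type minimisation lemma showing that this solution minimises the numerator among all admissible extensions of $\phi$; and the abstract spectral min-max for $L$, whose spectrum is discrete by~\cite{RS}.

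First I would establish the identity $\ll L\phi,\phi\rr = ||d\hat\omega||^2_{L^2(M)} + ||\delta\hat\omega||^2_{L^2(M)}$, where $\hat\omega$ solves~(\ref{RS}). Applying Green's formula~(\ref{Green}) with $\alpha = \hat\omega$ and $\beta = d\hat\omega$ gives $||d\hat\omega||^2 = \ll\hat\omega,\delta d\hat\omega\rr + \int_{\partial M}\langle i^*\hat\omega, i_n d\hat\omega\rangle$, whose boundary term is exactly $\ll\phi, L\phi\rr$ by the definition of $L$. The companion formula $\ll\delta\alpha,\beta\rr = \ll\alpha,d\beta\rr - \int_{\partial M}\langle i_n\alpha, i^*\beta\rangle$, obtained from~(\ref{Green}) via the Hodge star and applied with $\alpha = \hat\omega$ and $\beta = \delta\hat\omega$, gives $||\delta\hat\omega||^2 = \ll\hat\omega, d\delta\hat\omega\rr$, the boundary term vanishing because $i_n\hat\omega = 0$ (so its precise sign is irrelevant here). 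Summing and using $\Delta\hat\omega = (\delta d + d\delta)\hat\omega = 0$ yields the identity.

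Next I would prove the minimisation lemma, the exact analogue of Proposition~\ref{minL}: among all $\rho\in\Omega^p(M)$ with $i^*\rho = \phi$ and $i_n\rho = 0$, the solution $\hat\omega$ of~(\ref{RS}) minimises $||d\rho||^2 + ||\delta\rho||^2$. Writing $\rho = \hat\omega + \psi$ with $i^*\psi = 0$ and $i_n\psi = 0$, the cross terms $\ll d\psi, d\hat\omega\rr + \ll\delta\psi,\delta\hat\omega\rr$ are evaluated by the same two Green formulas: both boundary contributions drop out since $i^*\psi = i_n\psi = 0$, and the interior terms combine into $\ll\psi, \Delta\hat\omega\rr = 0$. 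Hence $||d\rho||^2 + ||\delta\rho||^2 = ||d\hat\omega||^2 + ||\delta\hat\omega||^2 + ||d\psi||^2 + ||\delta\psi||^2 \geqslant ||d\hat\omega||^2 + ||\delta\hat\omega||^2$, with equality iff $\psi = 0$. Thus for each fixed $\phi$ the inner minimisation over admissible extensions is realised by $\hat\omega$ and equals $\ll L\phi,\phi\rr/||\phi||^2_{L^2(\partial M)}$.

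Finally I would invoke Courant--Fischer. As $L$ is a positive self-adjoint elliptic pseudodifferential operator of order one with discrete spectrum~\cite{RS}, its eigenvalues satisfy $\mu_k^{(p)} = \max_E \min_{\phi\perp E} \ll L\phi,\phi\rr/||\phi||^2_{L^2(\partial M)}$ with $E$ ranging over $(k-1)$-dimensional subspaces, the maximiser being $V_{k-1}$. Combining this with the two previous steps replaces $\ll L\phi,\phi\rr$ by the minimised numerator and produces the asserted formula; the final displayed inequality then follows by taking $E = V_{k-1}$ and observing that $\mu_k^{(p)}$ dominates the Rayleigh quotient for any $\phi\perp V_{k-1}$ and any admissible extension $\hat\phi$, not merely the minimising one. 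The main obstacle is the functional-analytic bookkeeping that justifies restricting the abstract min-max, a priori posed on the form domain $H^{1/2}\Omega^p(\partial M)$ of $L$, to smooth test forms $\phi$; this is handled exactly as in the min-max proof for $\Lambda$ in Section~\ref{minimax}, invoking elliptic regularity of $L$ so that eigenforms are smooth and smooth forms suffice in the variational characterisation.
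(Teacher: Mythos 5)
Your proposal is correct: the quadratic-form identity, the Dirichlet-type minimisation lemma, and the appeal to Courant--Fischer with the regularity bookkeeping all check out (including the sign-free handling of the boundary terms via $i_n\hat\omega=0$, and the equality case $\psi=0$, which follows from $\mH^p_D(M)\cap\mH^p_N(M)=\{0\}$ in Theorem~\ref{DTG}(a)). The paper itself does not prove this theorem --- it is quoted from~\cite{RS} --- but your argument is precisely the strategy the paper uses for the analogous min-max principle for $\Lambda$ in Section~\ref{minimax}, namely the counterpart of Proposition~\ref{minL} combined with the abstract min-max theorem and elliptic regularity, so it is the expected proof.
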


We turn to Theorem~\ref{CompTheorem}. Let us remind the statement .
\begin{theorem}
Let $\tilde\sigma^{(p)}_k$ and $\tilde\mu^{(p)}_k$ denote the $k$-th non-zero eigenvalue of $\Lambda$ and $L$ respectively. Then for any $0\leqslant p \leqslant (n-2)$
$$
\tilde\mu_k^{(p)}\leqslant\tilde\sigma_k^{(p)}
$$
\end{theorem}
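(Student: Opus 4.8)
The plan is to use the two min-max principles recorded above, in the form best suited to an upper bound on $\tilde\mu_k^{(p)}$. Since we compare the $k$-th \emph{non-zero} eigenvalues, I would phrase the variational principle for $L$ in its Courant--Fischer ``min over $k$-dimensional subspaces'' form,
$$
\tilde\mu_k^{(p)} = \min_{F}\ \max_{0\ne\phi\in F} R_L(\phi),\qquad R_L(\phi) = \frac{\|d\hat\phi\|^2_{L^2(M)} + \|\delta\hat\phi\|^2_{L^2(M)}}{\|\phi\|^2_{L^2(\partial M)}},
$$
where $F$ ranges over $k$-dimensional subspaces of $(\ker L)^\perp = (i^*\mathcal H^p_N(M))^\perp$ and $\hat\phi$ is the Raulot--Savo extension. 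It then suffices to produce one such $F$ on which $R_L\le\tilde\sigma_k^{(p)}$.

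First I would compare the two Rayleigh quotients on coclosed forms. By Proposition~\ref{minL} the $\Lambda$-quotient is $R_\Lambda(\phi) = \min_{i^*\rho=\phi}\|d\rho\|^2_{L^2(M)}/\|\phi\|^2_{L^2(\partial M)}$, the minimum running over \emph{all} extensions of $\phi$, whereas $R_L$ minimizes the larger integrand $\|d\rho\|^2+\|\delta\rho\|^2$ over the \emph{smaller} class $\{i^*\rho=\phi,\ i_n\rho=0\}$. Hence $R_\Lambda(\phi)\le R_L(\phi)$ for every coclosed $\phi$. This correctly relates the extension energies, but it points the wrong way: the non-trivial eigenforms of $\Lambda$ do lie in $i_n\mathcal H^{p+1}(M)\subset(\ker L)^\perp$ (by Corollary~\ref{inH} and Proposition~\ref{bijection}), yet testing $L$ against their span only yields $\max R_L\ge\tilde\sigma_k^{(p)}$, a useless bound. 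So the theorem cannot follow from a pointwise comparison on $c\mathcal C^p(\partial M)$.

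The inequality must instead be extracted from the fact that $L$ is defined on \emph{all} of $\Omega^p(\partial M)$, while $\Lambda$ only sees $c\mathcal C^p(\partial M)$. My strategy is to build the trial space $F$ using the Hodge decomposition $\Omega^p(\partial M)=\mathcal E^p(\partial M)\oplus c\mathcal C^p(\partial M)$ and to exploit the \emph{exact} summand, which is invisible to $\Lambda$. On exact forms $\phi=d\beta$ one extends with essentially no $d$-energy (a closed extension contributes none to $\|d\hat\phi\|^2$), so that $R_L$ restricted to $\mathcal E^p(\partial M)$ is controlled by a codifferential energy alone; these are the low-lying eigenvalues of $L$ absent from the spectrum of $\Lambda$. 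Combining a controlled number of such exact directions with coclosed ones, the argument should close via the counting inequality $\widetilde N_L(t)\ge\widetilde N_\Lambda(t)$ for every $t>0$, where $\widetilde N_L,\widetilde N_\Lambda$ count the \emph{non-zero} eigenvalues below $t$; this is exactly equivalent to $\tilde\mu_k^{(p)}\le\tilde\sigma_k^{(p)}$. Proposition~\ref{simmetricity} and Green's formula~(\ref{Green}) would be used throughout to rewrite the relevant quadratic forms as extension energies.

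The main obstacle is the quantitative control of $R_L$ on the exact summand together with the assembly of the mixed trial space so that the counting closes: one must show that the eigenvalues carried by $\mathcal E^p(\partial M)$ interlace below the coclosed spectrum by \emph{enough} to overcome the pointwise deficit $R_L\ge R_\Lambda$ on $c\mathcal C^p(\partial M)$. I also expect the discrepancy $\dim\ker L = b_p \ge I_p = \dim(\ker\Lambda\cap c\mathcal C^p)$ between the two null spaces to enter the bookkeeping. This is where I anticipate orientability and the hypothesis $p\le n-2$ being used, the latter ensuring that $c\mathcal C^p(\partial M)$ and the relevant exact and coexact pieces are non-degenerate; for $p=n-1$ the space $c\mathcal C^{n-1}(\partial M)$ collapses to the line of harmonic fields and $\Lambda\equiv 0$, so the statement would be vacuous there.
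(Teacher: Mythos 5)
You correctly diagnose the central difficulty --- the pointwise comparison of Rayleigh quotients on $c\mathcal{C}^p(\partial M)$ points the wrong way, because $\lambda(\phi)$ beats the Raulot--Savo extension for the $d$-energy but need not satisfy $i_n\lambda(\phi)=0$ --- but the route you then propose does not close, and it is not the paper's route. Your plan rests on ``eigenvalues carried by $\mathcal{E}^p(\partial M)$'' interlacing low enough to compensate for the deficit on the coclosed summand. This is not even well-posed: $L$ does not preserve the Hodge decomposition of $\Omega^p(\partial M)$, so there is no spectrum ``carried by'' the exact summand, and no mechanism is offered (nor is one apparent) forcing sufficiently many low-lying $L$-eigenvalues of exact type below each $\tilde\sigma_k^{(p)}$. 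The counting inequality $\widetilde N_L(t)\geqslant\widetilde N_\Lambda(t)$ is exactly equivalent to what must be proved, and the proposal leaves its only nontrivial ingredient --- the quantitative control on the exact summand and the assembly of the mixed trial space --- as an acknowledged obstacle. As written, the argument has a genuine gap at its key step.

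The missing idea is a per-eigenform correction rather than a counting argument. The paper shows (Proposition~\ref{RSBS}) that for every $\phi$ there is $\phi_0\in\ker\Lambda=i^*\mathcal{H}^p(M)$ such that $\psi=\phi+\phi_0$ is orthogonal to $\ker L=i^*\mathcal{H}^p_N(M)$ and admits an extension $\omega$ that is simultaneously harmonic, coclosed and satisfies $i_n\omega=0$; the construction takes a coclosed primitive of the harmonic field extending $d\phi$ and adjusts it using the Hodge decomposition, Corollary~\ref{iH} and Theorem~\ref{DTG}. For such $\psi$ the Raulot--Savo quotient carries no $\delta$-energy, one has $d\hat\psi=d\hat\phi$ because $\psi-\phi\in\ker\Lambda$, and $\|\psi\|^2_{L^2}=\|\phi\|^2_{L^2}+\|\phi_0\|^2_{L^2}\geqslant\|\phi\|^2_{L^2}$ since $\phi\perp i^*\mathcal{H}^p(M)$. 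Applying this to the span $V_k$ of the first $k$ non-zero eigenforms of $\Lambda$ yields a $k$-dimensional space $\tilde V_k\perp\ker L$; choosing $\psi\in\tilde V_k$ orthogonal to the first $k-1$ non-zero eigenforms of $L$ and chaining the three observations above gives $\tilde\mu_k^{(p)}\leqslant R_L(\psi)\leqslant \|d\hat\phi\|^2_{L^2(M)}/\|\phi\|^2_{L^2(\partial M)}\leqslant\tilde\sigma_k^{(p)}$. Note that this does exploit the room in $\Omega^p(\partial M)$ outside $c\mathcal{C}^p(\partial M)$, as you suspected --- the correction $\phi_0$ is generally not coclosed --- but as a modification of each trial form, not as a source of extra low eigenvalues.
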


Just for the record, let us state the same inequality for eigenvalues without the tilde.
\begin{corollary} One has the following inequality
$$
\mu_{k+b_p}\leqslant\sigma_{k+I_p},
$$
where $b_p = \dim H^p(M)$ and $I_p = \dim\mathrm{im}\{i_p\colon H^p(M)\to H^p(\partial M)\}$.
\end{corollary}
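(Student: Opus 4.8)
The plan is to deduce the inequality from the two min-max formulas, after reconciling the nonzero parts of the two spectra. Since $\dim(\ker\Lambda|_{c\mathcal C^p})=I_p$ while $\dim\ker L=b_p$, one has $\tilde\sigma_k=\sigma^{(p)}_{k+I_p}$ and $\tilde\mu_k=\mu^{(p)}_{k+b_p}$, so the assertion is exactly the corollary $\mu^{(p)}_{k+b_p}\le\sigma^{(p)}_{k+I_p}$. To bound $\mu^{(p)}_{k+b_p}$ from above I would use the trial-space (min-max) form of the $L$ spectrum: it suffices to exhibit a $(k+b_p)$-dimensional subspace $F\subset H^{1/2}\Omega^p(\partial M)$ on which the Rayleigh quotient of $L$ is everywhere at most $\sigma^{(p)}_{k+I_p}$. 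Writing $F=\ker L\oplus G$ with $G\perp\ker L$ and using $\ker L\perp\im L$, a short computation shows that the maximum of the $L$-Rayleigh quotient over $F$ equals its maximum over $G$; this ``absorbs'' the kernel and reduces the task to constructing a $k$-dimensional space $G\subset\Omega^p(\partial M)$, orthogonal to $\ker L$, on which the $L$-Rayleigh quotient does not exceed $\tilde\sigma_k$.

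The first point to record is that the obvious candidate for $G$ fails. For $\phi\in c\mathcal C^p(\partial M)$ every Neumann extension competing for $L$ is also admissible for the $\Lambda$-energy and carries the additional nonnegative term $\|\delta\hat\phi\|^2_{L^2(M)}$; hence the associated quadratic forms satisfy $Q_L(\phi)\ge Q_\Lambda(\phi)$, and the two Rayleigh quotients obey $R_L\ge R_\Lambda$ on all of $c\mathcal C^p(\partial M)$. Consequently the span of the first $k$ nonzero $\Lambda$-eigenforms is useless, and the comparison must be powered by the fact that $L$ acts on the strictly larger space $\Omega^p(\partial M)=\mathcal E^p(\partial M)\oplus c\mathcal C^p(\partial M)$: the extra low eigenvalues have to be produced from the exact part $\mathcal E^p(\partial M)$, which sits inside $\ker\Lambda$ but not inside $\ker L$.

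This is what dictates the construction of $G$. Given a nonzero $\Lambda$-eigenform $\phi_i$ with coclosed harmonic extension $\omega_i=\lambda(\phi_i)$, the field $\eta_i:=d\omega_i$ is a genuine harmonic field in $\mathcal H^{p+1}(M)$ with $i_n\eta_i=\Lambda\phi_i=\tilde\sigma_i\phi_i$ and $i^*\eta_i=d_\partial\phi_i$. Since $d\eta_i=\delta\eta_i=0$, the field $\eta_i$ is an ``energy-free'' but non-Neumann extension of the exact boundary form $d_\partial\phi_i$, and correcting it by a form with vanishing tangential trace and normal trace $\tilde\sigma_i\phi_i$ produces an honest $L$-competitor for $d_\partial\phi_i$ whose energy is controlled by Green's formula~(\ref{Green}) together with the harmonic-field identities of Proposition~\ref{inbn}. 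I would take $G$ to be spanned by the resulting test forms built from the first $k$ eigenforms $\phi_i$, switching between normal and tangential data with the Hodge star as in Proposition~\ref{inbn}, and use the structure theory (Theorem~\ref{DTG} and Corollaries~\ref{iH},~\ref{inH}) to keep the construction inside $(\ker L)^\perp$ and to match dimensions.

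The hard part will be twofold. First, one must verify that the constructed competitors actually satisfy the \emph{sharp} bound $R_L\le\tilde\sigma_k$ and not merely some multiple of it; this is a genuine estimate, since the correction that enforces the Neumann condition $i_n=0$ costs energy, and it is precisely here that the hypothesis $p\le n-2$ enters, guaranteeing that $\mathcal H^{p+1}(M)$ and the normal traces $i_n\mathcal H^{p+1}(M)$ are the non-degenerate objects carrying the $\Lambda$-spectrum (the operator $\Lambda$ degenerating to zero at $p=n-1$). Second, because the assignment $\phi_i\mapsto d_\partial\phi_i$ annihilates the boundary-harmonic component of $\phi_i$, the coexact and the $\mathcal H^p(\partial M)$-parts of the eigenforms must be handled separately — the latter through Dirichlet harmonic fields — so that $\dim G=k$, $G\perp\ker L$, and the dimension shift from $I_p$ to $b_p$ all come out exactly. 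Once these points are settled, the trial-space bound gives $\mu^{(p)}_{k+b_p}\le\sigma^{(p)}_{k+I_p}$, which is the claimed $\tilde\mu^{(p)}_k\le\tilde\sigma^{(p)}_k$.
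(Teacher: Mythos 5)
Your opening reduction is correct and is in fact all the paper itself does for this corollary: since $\ker L=i^*\mathcal H^p_N(M)$ has dimension $b_p$ (by Theorem~\ref{DTG}(a) the map $i^*$ is injective on $\mathcal H^p_N(M)$) and $\ker\Lambda\cap c\mathcal C^p(\partial M)$ has dimension $I_p$ by Theorem~\ref{MainTheorem}, the displayed inequality is exactly Theorem~\ref{CompTheorem} rewritten without tildes. Had you stopped there and cited that theorem, the proof would be complete.

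Instead you set out to reprove the comparison $\tilde\mu^{(p)}_k\leqslant\tilde\sigma^{(p)}_k$, and the construction you sketch does not work. The decisive flaw is a degree mismatch: your test objects are the exact forms $d_\partial\phi_i=i^*\bigl(d\lambda(\phi_i)\bigr)$, which are $(p+1)$-forms on $\partial M$, so the competitors you build from the harmonic fields $\eta_i=d\lambda(\phi_i)\in\mathcal H^{p+1}(M)$ can only test the spectrum of $L$ acting on $\Omega^{p+1}(\partial M)$, i.e.\ $\mu^{(p+1)}$, not $\mu^{(p)}$; the Hodge star on $\partial M$ does not repair this either, since it sends $p$-forms to $(n-1-p)$-forms. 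Moreover the bound $R_L\leqslant\tilde\sigma_k$ on the corrected competitors --- which you yourself flag as ``the hard part'' --- is left entirely unproved, and there is no a priori reason the energy of the correction enforcing $i_n=0$ is controlled by $\tilde\sigma_i$. Your diagnosis that the span of the first $k$ nonzero $\Lambda$-eigenforms is ``useless'' also points you away from the actual argument: the paper (Proposition~\ref{RSBS} and Section~\ref{CT}) uses precisely those eigenforms $\phi_i$, corrected by elements of $i^*\mathcal H^p(M)=\ker\Lambda$ to forms $\psi_i\perp\ker L$ whose \emph{coclosed} harmonic extensions $\hat\psi_i$ in addition satisfy $i_n\hat\psi_i=0$. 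For such $\psi_i$ the extension is simultaneously admissible for $L$ and for $\Lambda$, the extra term $\|\delta\hat\psi\|^2_{L^2(M)}$ vanishes identically, $d\hat\psi=d\hat\phi$, and the correction only increases the denominator, $\|\psi\|^2\geqslant\|\phi\|^2$; this is what produces the sharp constant. That mechanism is absent from your proposal, so the argument as written has a genuine gap.
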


We start the proof with a couple of preliminary results.

\begin{proposition}
\label{existence}
For any $\phi\in\mE^p(\partial M)$ there exists $\xi\in\mathcal E\mathcal H^p(M)$ satisfying $i^*\xi = \phi$.
\end{proposition}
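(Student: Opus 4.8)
The plan is to produce the desired harmonic field by starting from an exact extension of $\phi$ and then projecting it onto harmonic fields via the Hodge--Morrey--Friedrichs decomposition, exploiting the fact that exactness is preserved by that projection. Since $\phi\in\mE^p(\partial M)$, write $\phi = d\alpha$ for some $\alpha\in\Omega^{p-1}(\partial M)$, and choose any extension $\tilde\alpha\in\Omega^{p-1}(M)$ with $i^*\tilde\alpha = \alpha$. Then $d\tilde\alpha$ is an exact, hence closed, $p$-form on $M$ whose trace, by $i^*d = di^*$ (Proposition~\ref{inbn}), equals $d\alpha = \phi$.

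Next I would apply the Hodge--Morrey--Friedrichs decomposition to $d\tilde\alpha$, writing $d\tilde\alpha = d\gamma_D + \delta\gamma_N + h$ with $d\gamma_D\in d\Omega^{p-1}_D(M)$, $\delta\gamma_N\in\delta\Omega^{p+1}_N(M)$ and $h\in\mH^p(M)$. Because $d\tilde\alpha$, $d\gamma_D$ and $h$ are all closed, the coexact component satisfies $d\delta\gamma_N = 0$. A one-line Green's formula computation using $i_n\gamma_N = 0$ then gives $||\delta\gamma_N||^2 = \ll \delta\gamma_N,\delta\gamma_N\rr = \ll \gamma_N, d\delta\gamma_N\rr = 0$, so $\delta\gamma_N = 0$ and thus $d\tilde\alpha = d\gamma_D + h$.

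It remains to identify $h$ as the sought-after form $\xi$. On one hand, $i^*\gamma_D = 0$ forces $i^*(d\gamma_D) = d(i^*\gamma_D) = 0$, so taking traces in $d\tilde\alpha = d\gamma_D + h$ yields $i^*h = i^*(d\tilde\alpha) = \phi$. On the other hand, $h = d\tilde\alpha - d\gamma_D = d(\tilde\alpha - \gamma_D)$ is exact, and since it is a harmonic field it therefore lies in $\mE\mH^p(M)$. Setting $\xi = h$ completes the argument.

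I do not expect a serious obstacle; the only point requiring genuine care is the vanishing of the coexact--Neumann component $\delta\gamma_N$, which is precisely where closedness of $d\tilde\alpha$ enters in an essential way. The conceptual content is that one should begin from an \emph{exact} extension $d\tilde\alpha$, so that the harmonic-field part of the decomposition inherits exactness automatically, rather than starting from an arbitrary harmonic field with trace $\phi$ (as furnished by Corollary~\ref{iH}) and attempting to remove its Neumann component afterwards.
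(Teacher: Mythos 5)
Your argument is correct, but it takes a different route from the paper. The paper's proof is a two-line verification: writing $\phi = d\alpha$, it sets $\xi = d\lambda(\alpha)$, where $\lambda(\alpha)$ is the canonical coclosed harmonic extension of $\alpha$ from Proposition~\ref{L}; then $i^*\xi = d\,i^*\lambda(\alpha) = \phi$ and $\delta\xi = \delta d\lambda(\alpha) = \Delta\lambda(\alpha) = 0$, so $\xi\in\mathcal E\mathcal H^p(M)$ immediately. You instead take an \emph{arbitrary} smooth extension $\tilde\alpha$ of $\alpha$ and extract the harmonic-field component $h$ of $d\tilde\alpha$ from the Hodge--Morrey--Friedrichs decomposition, checking by hand (correctly, via $d\delta\gamma_N=0$, Green's formula and $i_n\gamma_N=0$) that the $\delta\Omega^{p+1}_N$-component of a closed form vanishes, so that $h = d(\tilde\alpha-\gamma_D)$ is exact with $i^*h=\phi$. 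What your approach buys is self-containedness: it uses only the decomposition theorem and does not invoke the solvability of the boundary value problem~(\ref{DtN}). What the paper's approach buys is brevity, since the operator $\lambda$ has already been constructed and the coclosedness of $\lambda(\alpha)$ converts exactness and coclosedness of $\xi$ into one identity $\delta d\lambda(\alpha)=\Delta\lambda(\alpha)$. Note also that the two constructions need not produce the same form: both $d\lambda(\alpha)$ and your $h$ lie in $\mathcal E\mathcal H^p(M)$ with trace $\phi$, but they may differ by an element of $\mathcal E\mathcal H^p_D(M)$; this is harmless, as the proposition only asserts existence.
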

\begin{proof}
Let $\phi = d\alpha$, then $\xi = d\lambda(\alpha)$ is the form in question. Indeed, $i^*\xi = di^*\lambda(\alpha) = d\alpha = \phi$ and $\delta \xi = \delta d\lambda(\alpha) = \Delta\lambda(\alpha) = 0$.
%
%
%
%By Theorem~\ref{normal}, the space of harmonic forms $\xi$ with $i^*\xi=\phi$ is an affine linear space with an associated vector space $\mH^p_D(M)$. Let us denote this affine space by $V$. The exact elements of $V$ are the ones orthogonal to $\mH^p_N(M)$. Let $\phi = d\alpha$ then for any $\xi\in V$ and any $\delta\lambda\in c\mathcal E\mathcal H^p_N(M)$ Green's formula implies
%$$
%\int\limits_M\langle \xi,\delta\lambda\rangle = \pm\int\limits_{\partial M}d\alpha\wedge*\bn \lambda =  \pm\int\limits_{\partial M} \left(d(\alpha\wedge*\bn \lambda) \pm \alpha\wedge*\bn\delta\lambda \right) = 0,
%$$
%by Stokes theorem and boundary conditions for $\delta\lambda$.
%Moreover, by Theorem~\ref{DTG}, $\mathcal E\mathcal H^p_D(M)\perp\mH^p_N(M)$ and all elements of $c\mathcal E_\partial\mathcal H^p_D(M)$ are non-orthogonal to at least one element of $\mH^p_N$. Trivial linear algebra and item $(e)$ of Theorem~\ref{DTG} shows that there is $\xi\in V$ such that $\xi\perp\mH^p_N(M)$.
\end{proof}

\begin{proposition}
\label{RSBS}
For any $\phi\in\Omega^p(\partial M)$ there exists (not necessarily unique) $\psi\in\Omega^p(\partial M)$ such that $\psi-\phi\in i^*\mathcal H^p(M)$, $\psi\perp i^*\mathcal H^p_N(M)$ and there exists a solution $\omega$ to
\begin{equation}
\label{musigma}
\left\{
   \begin{array}{rcl}
	\Delta\omega &=& 0,\\
	\delta\omega &=& 0,\\
	i_n\omega &=& 0,\\
	i^*\omega &=& \psi.\\
   \end{array}
\right.
\end{equation}

\end{proposition}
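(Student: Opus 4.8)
The plan is to produce $\omega$ as a single Hodge--Morrey--Friedrichs component of any solution of the interior problem~(\ref{DtN}) and to read off $\psi$ as its boundary trace; the whole point will be that this trace differs from $\phi$ exactly by an element of $i^*\mathcal H^p(M)$, so that the two required properties of $\psi$ come essentially for free, and the construction is essentially forced once one decomposes.

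Concretely, I would start from $\lambda(\phi)\in\mathcal L(\phi)$ (Proposition~\ref{L}), which satisfies $\Delta\lambda(\phi)=0$, $\delta\lambda(\phi)=0$ and $i^*\lambda(\phi)=\phi$, and apply the Hodge--Morrey--Friedrichs decomposition to write $\lambda(\phi)=d a+\delta b+h$ with $da\in d\Omega^{p-1}_D(M)$, $\delta b\in\delta\Omega^{p+1}_N(M)$ and $h\in\mathcal H^p(M)$. Applying $\delta$ and using $\delta\lambda(\phi)=0$ gives $\delta(da)=0$, so $da$ is a harmonic field lying in $d\Omega^{p-1}_D(M)$; since this summand is $L^2$-orthogonal to $\mathcal H^p(M)$, it must vanish, and hence $\lambda(\phi)=\delta b+h$. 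I then set $\omega:=\delta b$. By the convention for the middle summand, $b$ can be taken with $i_nb=0$, whence $\bn b=0$ and, by the intertwining relation $\bn\delta=\delta\bn$ of Proposition~\ref{inbn}, $i_n\omega=i_n\delta b=0$. Moreover $\delta\omega=\delta\delta b=0$ and $\Delta\omega=\Delta\lambda(\phi)-\Delta h=0$, so $\omega$ solves the first three equations of~(\ref{musigma}) with boundary datum $\psi_0:=i^*\omega=i^*\lambda(\phi)-i^*h=\phi-i^*h$. Since $h\in\mathcal H^p(M)$ we have $i^*h\in i^*\mathcal H^p(M)$, so $\psi_0-\phi\in i^*\mathcal H^p(M)$, which is the first asserted property.

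It remains to arrange $\psi\perp i^*\mathcal H^p_N(M)$ without disturbing anything else, and here I would simply subtract the harmonic part on the boundary: let $\lambda_N\in\mathcal H^p_N(M)$ be the Neumann harmonic field whose trace realises the $i^*\mathcal H^p_N(M)$-component of $\psi_0$, and replace $\omega$ by $\omega-\lambda_N$ and $\psi_0$ by $\psi:=\psi_0-i^*\lambda_N$. Because $\lambda_N$ is a harmonic field with $i_n\lambda_N=0$, all of $\Delta\omega=0$, $\delta\omega=0$, $i_n\omega=0$ are preserved, while now $i^*\omega=\psi$ with $\psi\perp i^*\mathcal H^p_N(M)$ and still $\psi-\phi\in i^*\mathcal H^p(M)$. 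This would complete the proof.

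Since the construction is forced, the only genuinely delicate points are the two boundary identities: that the middle Hodge--Morrey summand $\delta b$ has vanishing normal part (which I would justify through the relation $\bn\delta=\delta\bn$ of Proposition~\ref{inbn}, rather than by a coordinate computation), and that the orthogonal projection onto $i^*\mathcal H^p_N(M)$ is realised by the trace of an actual Neumann harmonic field $\lambda_N$. I expect the normal-part identity $i_n\delta b=0$ to be the main thing to get exactly right, since it is precisely what makes $\delta b$ --- as opposed to $\lambda(\phi)$ itself --- the correct choice of extension.
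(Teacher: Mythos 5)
Your argument is correct, and it reaches the conclusion by a genuinely shorter route than the paper's. The paper does not decompose $\lambda(\phi)$ directly: it first produces $\chi\in\mathcal{EH}^{p+1}(M)$ with $i^*\chi=d\phi$ via Proposition~\ref{existence}, takes an \emph{arbitrary} primitive $\omega'$ of $\chi$, Hodge-decomposes that, and keeps $\omega=\delta\beta+\gamma_N$. Because the primitive and $\chi$ are arbitrary, the difference $i^*\omega_\chi-\phi$ is not visibly in $i^*\mathcal{H}^p(M)$, and the paper must run an affine-space/dimension argument over the $\mathcal{EH}^{p+1}_D(M)$-worth of choices of $\chi$, invoking Corollary~\ref{iH} and Theorem~\ref{DTG}(b), to find one choice for which it is. You instead make what amounts to the specific choice $\chi=d\lambda(\phi)$, $\omega'=\lambda(\phi)$: coclosedness of $\lambda(\phi)$ forces the $d\Omega^{p-1}_D(M)$ summand to be a harmonic field orthogonal to itself, hence zero, so $\lambda(\phi)=\delta b+h$ and $i^*\delta b-\phi=-i^*h$ lies in $i^*\mathcal{H}^p(M)$ for free; the remaining correction is a finite-dimensional projection realised by the trace of some $\lambda_N\in\mathcal{H}^p_N(M)$, which manifestly preserves the first three equations. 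This collapses steps 1)--3) of the paper's proof and avoids DeTurck--Gluck entirely. The two points you single out as delicate are exactly the right ones and both hold: $i_n\delta b=0$ follows from $\bn\delta=\delta\bn$ of Proposition~\ref{inbn} applied to $b$ with $\bn b=0$ (this is precisely why the middle Hodge--Morrey summand satisfies the Neumann condition), and the orthogonal projection of $\psi_0$ onto the finite-dimensional space $i^*\mathcal{H}^p_N(M)$ is by definition of that space the trace of an actual Neumann harmonic field. The only thing the paper's longer detour buys is a description of the whole family of admissible $\omega_\chi$, which is closer in flavour to the conjugate-form constructions reused in Section~\ref{YYHPS}; for the proposition as stated, your version is cleaner.
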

\begin{proof}
By Proposition~\ref{existence} there exists $\chi\in\mathcal {EH}^{p+1}(M)$ such that $i^*\chi = d\phi$ and $\chi$ is unique up to $\mathcal{EH}^{p+1}_D(M)$. Let $\omega'$ be a primitive of $\chi$, i.e. $d\omega' = \chi$. Consider Hodge decomposition $\omega' = d\alpha + \delta\beta + \gamma$, then $\omega = \delta\beta +\gamma_N$ solves
\begin{equation}
\left\{
   \begin{array}{rcl}
	\Delta\omega &=& 0,\\
	\delta\omega &=& 0,\\
	i_n\omega &=& 0\\
   \end{array}
\right.
\end{equation}
for any $\gamma_N\in\mH^p_N(M)$. Set $\omega_\chi$ to be a unique choice of $\gamma_N$ such that $i^*\omega_\chi\perp i^*\mathcal H^p_N(M)$. Consider the space $W = \{i^*\omega_\chi - \phi\,|\, \chi\in\mathcal {EH}^{p+1}(M),\, i^*\chi = d\phi \}$. Then one has the following properties.
\begin{itemize}
\item[1)] The space $W$ is an affine space of dimension $\dim\mathcal{EH}^{p+1}_D(M)$. Indeed, if $i^*\omega_{\chi_1} = i^*\omega_{\chi_2}$ then $\omega_{\chi_1} - \omega_{\chi_2}$ is a harmonic form with zero tangent and normal parts on the boundary. By Green's formula, $\omega_{\chi_1} - \omega_{\chi_2}\in\mH^p_N(M)\cap\mH^p_D(M)$, therefore, it is zero by Theorem~\ref{DTG}(a). 
\item[2)] Therefore, there exists $\phi_0\in W$ such that $\phi_0\perp i_n \mathcal{EH}^{p+1}_D(M)$.
\item[3)] Since $W\subset \mC^p(\partial M)$, Corollary~\ref{iH} and Theorem~\ref{DTG}(b) imply that $\phi_0\in i^*\mathcal H^p(M)$.
\item[4)] By definition, $\phi + W\perp i^*\mathcal H^p_N(M)$. Thus $\psi = \phi+\phi_0$ satisfies all the requirements of the theorem.
\end{itemize}
\end{proof}
\begin{proof}[Proof of Theorem~\ref{CompTheorem}] 

The idea is that if for $\psi$ there exists a solution to equation~(\ref{musigma}) then $\Lambda(\psi) = L(\psi)$ which allows us to connect operators $\Lambda$ and $L$.

Let $V_k$ be the space spanned by the eigenforms of $\Lambda$ corresponding to the first $k$ non-zero eigenvalues, i.e. $V_k$ is spanned by $\phi_1,\ldots,\phi_k$, where $\Lambda\phi_k = \tilde\sigma_k^{(p)}$. In particular, $V_k\perp i^*\mathcal H^p(M)$. Let $\psi_i$ be forms constructed from $\phi$ by means of applying Proposition~\ref{RSBS} and set $\tilde V_k$ be a vector space spanned by $\psi_1,\ldots,\psi_k$. Then Proposition~\ref{RSBS} implies the following properties of $\tilde V_k$:
\begin{itemize}
\item[(i)] for any $\psi\in\tilde V_k$ there exists a solution to~(\ref{musigma});
\item[(ii)] $\tilde V_k\perp i^*\mathcal H^p_N(M)$;
\item[(iii)] if $\psi = \sum_{i=1}^k a_i\psi_i\in\tilde V_k$ then $\phi=\sum_{i=1}^k a_i\phi_i\in V_k$ 
satisfies $\phi-\psi\in i^*\mathcal H^p(M)$. If there exist non-trivial $a_i$'s such that $\psi = 0$ then $\phi\in i^*\mathcal H^p(M)$. But $V_k\perp i^*\mathcal H^p(M)$, therefore, the map $\sum_{i=1}^k a_i\psi_i\mapsto \sum_{i=1}^k a_i\phi_i$ is an isomorphism;
\item[(iv)] $\dim\tilde V_k = k$.    
\end{itemize}

By property (iv), there exists $\psi\in\tilde V_k$ orthogonal to the first $k-1$ eigenforms of $L$ corresponding to non-zero eigenvalues. By property (ii), $\psi\perp\ker L$ and by property (iii), there exists $\phi\in V_k$ such that  $\psi - \phi\in \ker \Lambda$. Let $\hat\psi\in\mathcal L(\psi)$ be the solution to~(\ref{musigma}) and let $\hat\phi$ belong to $\mL(\phi)$. Then $i^*(d\hat\psi - d\hat\phi) = 0$ and $i_nd(\hat\psi - \hat\phi) = \Lambda(\phi - \psi) = 0$, therefore, $d\hat\psi = d\hat\phi$. The min-max theorem yields the following estimates,
\begin{equation*}
\begin{split}
\tilde\mu_k^{(p)} & \leqslant \frac{||d\hat\psi||^2_{L^2(M)} + ||\delta\hat\psi||^2_{L^2(M)}}{||\psi||^2_{L^2(\partial M)}} = \frac{||d\hat\psi||^2_{L^2(M)}}{||\psi||^2_{L^2(\partial M)}} = \frac{||d\hat\psi||^2_{L^2(M)}}{||\phi||^2_{L^2(\partial M)} + ||\psi - \phi||^2_{L^2(\partial M)}}  \leqslant 
\\ &\leqslant\frac{||d\hat\psi||^2_{L^2(M)}}{||\phi||^2_{L^2(\partial M)}} = \frac{||d\hat\phi||^2_{L^2(M)}}{||\phi||^2_{L^2(\partial M)}}\leqslant \sup\limits_{\phi\in V_k}\frac{||d\lambda(\phi)||^2_{L^2(M)}}{||\phi||^2_{L^2(\partial M)}} = \tilde\sigma_k^{(p)}.
\end{split}
\end{equation*} 
\end{proof}

\section{Proof of Theorem~\ref{HPS}}
\label{YYHPS}

In article~\cite{YY} Yang and Yu used the concept of conjugate harmonic forms to generalise the famous result of Hersch, Payne and Schiffer~\cite{HPS}. They proved the following theorem. 

\begin{theorem}[Yang, Yu~\cite{YY}]
\label{ThYY}
Let $M$ be a compact oriented $n$-dimensional Riemannian manifold with nonempty boundary. Let $\lambda_m$ be the $m$-th eigenvalue for the Laplacian operator on $\partial M$. Then for any two positive integers $m$ and $r$, one has 
\begin{equation*}
\mu_{m+1}^{(0)}\mu^{(n-2)}_{b_{n-2}+r}\leqslant \lambda_{m+r+b_{n-1}}.
\end{equation*}
\end{theorem}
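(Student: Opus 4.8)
The plan is to prove the general inequality of Theorem~\ref{HPS}; the stated Yang--Yu inequality (Theorem~\ref{ThYY}) then follows by setting $p=0$ and applying Theorem~\ref{CompTheorem} to its left-hand side, exactly as the remark after the statement indicates. The whole argument is built on the boundary \emph{conjugation map}
$$
C\colon c\mathcal C^p(\partial M)\to c\mathcal C^{n-2-p}(\partial M),\qquad C\phi=*\,d\phi,
$$
with $*$ and $d$ taken on $\partial M$. First I would record its properties: $\delta C\phi=\pm*dd\phi=0$, so the image lies in $c\mathcal C^{n-2-p}(\partial M)$; $C$ commutes with the boundary Hodge--Laplacian, so it sends a $\lambda'$-eigenform to an eigenform of the same eigenvalue; $\ker C=\ker d\cap c\mathcal C^p(\partial M)=\mathcal H^p(\partial M)$; and every $C\phi$ is \emph{coexact}, being $*$ of an exact form. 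Two identities will drive everything: $\|C\phi\|_{\partial M}^2=\|d\phi\|_{\partial M}^2$, and, using Proposition~\ref{inbn}, if $\hat\phi=\lambda(\phi)$ is the harmonic-field extension then $*_M d\hat\phi$ is a harmonic field on $M$ with normal part $i_n(*_Md\hat\phi)=\pm C\phi$ on $\partial M$.

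Next I set up the test space. Put $N=I_p+m+r+b_{n-p-1}-1$ and let $F\subset c\mathcal C^p(\partial M)$ be the span of the first $N$ eigenforms of the boundary Hodge--Laplacian, so that $\|d\phi\|_{\partial M}^2\le\lambda'^{(p)}_N\|\phi\|_{\partial M}^2$ on $F$, and the same bound holds for $C\phi$ since $C$ preserves eigenvalues. I want a single $\phi\in F$ with $C\phi\neq0$ that is admissible in both min--max formulas: $\phi\perp V^{(p)}_{m+I_p-1}$ (the first $m+I_p-1$ eigenforms of $\Lambda^{(p)}$, i.e. its $I_p$-dimensional kernel together with $m-1$ positive modes), and $C\phi\perp$ (the first $r-1$ positive eigenforms of $\Lambda^{(n-2-p)}$). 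The key point is that orthogonality of $C\phi$ to $\ker\Lambda^{(n-2-p)}$ is \emph{automatic}: by Theorem~\ref{MainTheorem} that kernel sits inside $\mathcal H^{n-2-p}(\partial M)$, i.e. it is boundary-harmonic, hence $L^2$-orthogonal to the coexact form $C\phi$; this is what removes the $I_{n-2-p}$ from the count. Counting the genuine linear conditions against $\dim F=N$, and discarding the bad directions $\ker C\cap F=\mathcal H^p(\partial M)$, one finds that it suffices to take $\dim F\ge b_p(\partial M)+m+r-1$. Invoking Poincaré--Lefschetz duality in the form $b_p(\partial M)=I_p+I_{n-p-1}$ together with $I_{n-p-1}\le b_{n-p-1}$ shows $b_p(\partial M)+m+r-1\le N$, so the stated index is admissible (indeed the method delivers the slightly stronger index $b_p(\partial M)+m+r-1$). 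I expect this bookkeeping, while elementary, to be the fussiest part to phrase cleanly, and the place where orientability and Theorem~\ref{DTG} are used.

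With such a $\phi$ fixed, the two min--max principles give, for any extensions $\hat\phi,\widehat{C\phi}$,
$$
\tilde\sigma^{(p)}_m\,\tilde\sigma^{(n-2-p)}_r\le\frac{\|d\hat\phi\|_{L^2(M)}^2}{\|\phi\|_{\partial M}^2}\cdot\frac{\|d\widehat{C\phi}\|_{L^2(M)}^2}{\|C\phi\|_{\partial M}^2},
$$
and the heart of the matter is to bound the right-hand side by $\lambda'^{(p)}_N$. Here I would \emph{not} use the harmonic extensions: by Proposition~\ref{minL} they minimise the energy, and the natural boundary Cauchy--Schwarz estimate (using $\langle\langle d\widehat{C\phi},*_Md\hat\phi\rangle\rangle=\pm\|C\phi\|_{\partial M}^2$) only yields the reverse bound $\|d\phi\|_{\partial M}^2/\|\phi\|_{\partial M}^2\le R_\phi R_{C\phi}$, which is relevant for sharpness but useless for an upper bound. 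Instead I would use an explicit comparison extension supported in a boundary collar $\partial M\times[0,\varepsilon)$, namely $e^{-\sqrt{\lambda'_N}\,t}\phi$ (and the analogous one for $C\phi$), whose Rayleigh quotient is controlled by
$$
\frac{\|d\hat\phi\|_{L^2(M)}^2}{\|\phi\|_{\partial M}^2}\le\int_0^{\infty}\bigl((\chi')^2+\lambda'_N\chi^2\bigr)\,dt=\sqrt{\lambda'_N},\qquad \chi(t)=e^{-\sqrt{\lambda'_N}\,t},
$$
using $\|d\phi\|_{\partial M}^2\le\lambda'_N\|\phi\|_{\partial M}^2$ on $F$; multiplying the two collar estimates produces the product bound $\lambda'^{(p)}_N$. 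The main obstacle is precisely this estimate: the collar metric is not a Riemannian product, so the cross terms and the variation of the induced volume element must be controlled and shown not to spoil the clean constant $\sqrt{\lambda'_N}$ (the symmetric bookkeeping of the two factors rests on $\|C\phi\|_{\partial M}^2=\|d\phi\|_{\partial M}^2$ and on $C$ preserving boundary eigenvalues). Once this variational upper bound is established, combining it with the existence of the admissible $\phi$ from the previous paragraph completes the proof, and specialising to $p=0$ recovers Theorem~\ref{ThYY} via Theorem~\ref{CompTheorem}.
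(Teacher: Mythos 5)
Your overall architecture --- a duality between $p$- and $(n-2-p)$-forms on $\partial M$, a dimension count to produce an admissible test form, and a product of two min--max quotients --- matches the paper's, and your observation that coexactness of the dual form makes orthogonality to $\ker\Lambda^{(n-2-p)}$ automatic is in the right spirit. But the proof collapses at the decisive analytic step, and the collapse stems from applying Cauchy--Schwarz to the wrong pairing. You tested the interior pairing $\ll d\widehat{C\phi},*_Md\hat\phi\rr$, correctly found that it only yields a lower bound for $R_\phi R_{C\phi}$, and concluded that the conjugate-form mechanism is useless for an upper bound. It is not. The paper's argument uses a genuine dual form $\psi$ with $d\lambda(\psi)=*_Md\lambda(\phi)$, obtained from a primitive of the exact harmonic field $*_Md\lambda(\phi)$; note that $\psi$ is \emph{not} your $C\phi=*_\partial d_\partial\phi$ --- rather $\Lambda\psi=\pm C\phi$. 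Because the Hodge star is an isometry, the product of the two interior energies is the single quantity $||d\lambda(\psi)||^4_{L^2(M)}$; Green's formula converts $||d\lambda(\psi)||^2_{L^2(M)}$ into the \emph{boundary} pairing $\int_{\partial M}\langle\psi,i_nd\lambda(\psi)\rangle$ with $i_nd\lambda(\psi)=\pm *_\partial d\phi$, and Cauchy--Schwarz applied to \emph{that} pairing gives $||d\lambda(\psi)||^4\leqslant||\psi||^2_{L^2(\partial M)}||d\phi||^2_{L^2(\partial M)}$ --- an upper bound, which immediately yields $R_\phi R_\psi\leqslant||d\phi||^2_{L^2(\partial M)}/||\phi||^2_{L^2(\partial M)}\leqslant\lambda'^{(p)}_N$.

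The substitute you propose --- exponential test extensions $e^{-\sqrt{\lambda'_N}\,t}\phi$ in a boundary collar --- cannot close this gap. Even granting a product structure, it would prove the split estimate that \emph{each} Rayleigh quotient is at most $\sqrt{\lambda'_N}$, which is strictly stronger than the product inequality and false for general metrics; and for a non-product collar the metric distortion enters multiplicatively at the fixed length scale $1/\sqrt{\lambda'_N}$, so it contaminates the constant by a factor depending on the geometry of $M$ near $\partial M$. You flag this as ``the main obstacle'' without resolving it, and there is no way around it: the clean constant in Hersch--Payne--Schiffer-type inequalities comes precisely from the conjugate-form identity you discarded. A secondary issue: the dimension count must also secure $*_Md\lambda(\phi)\perp\mathcal H^{n-p-1}_N(M)$ (an additional $b_{n-p-1}$ linear conditions) so that the dual form $\psi$ exists at all; this is where $b_{n-p-1}$ enters the index in Theorem~\ref{HPS}, and it is not visible in a count based on $C\phi=*_\partial d_\partial\phi$ alone.
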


Let $\lambda'^{(p)}_k$ denote the $k$-th eigenvalue of the Hodge Laplacian $\Delta_\partial$ on $\partial M$ restricted to the space $c\mathcal C^p(\partial M)$.
We will prove the following.

\begin{theorem}
Let $M$ be a compact oriented $n$-dimensional Riemannian manifold with nonempty boundary. Then for any two positive integers $m$ and $r$ and for any $p= 0,\ldots,n-2$, one has
\begin{equation}
\sigma^{(p)}_{m+I_p}\sigma^{(n-2-p)}_{r+I_{n-2-p}}\leqslant \lambda'^{(p)}_{I_p+m+r+b_{n-p-1}-1}.
\end{equation}
\end{theorem}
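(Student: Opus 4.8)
The plan is to adapt the Hersch--Payne--Schiffer argument by means of a conjugate form construction that exchanges the degrees $p$ and $n-2-p$ through the Hodge star $*$ on $M$, combined with the two min--max principles for $\Lambda$ on $c\mathcal C^p(\partial M)$ and on $c\mathcal C^{n-2-p}(\partial M)$ and with the eigenvalue count for $\Delta_\partial$ on $c\mathcal C^p(\partial M)$. The degree shift $p\mapsto n-2-p$ is \emph{not} boundary Hodge duality (which would give $n-1-p$); rather it is the analogue of passing to a conjugate harmonic function, realized here by taking a primitive of $*d\omega$.

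First I would set up the conjugate form. Given $\phi\in c\mathcal C^p(\partial M)$, let $\omega=\lambda(\phi)$ be its canonical extension solving~(\ref{DtN}); then $d\omega\in\mH^{p+1}(M)$ and, by Proposition~\ref{simmetricity}, $\|d\omega\|^2_{L^2(M)}=\ll\Lambda\phi,\phi\rr$. Put $\alpha=*d\omega$, a harmonic field of degree $n-1-p$ with $\|\alpha\|_{L^2(M)}=\|d\omega\|_{L^2(M)}$, whose tangential part is $i^*\alpha=\pm *\Lambda\phi$ by Proposition~\ref{inbn}. When the class $[\alpha]\in H^{n-1-p}(M)$ vanishes, one can solve $d\tilde\omega=\alpha$, $\delta\tilde\omega=0$ for a conjugate harmonic field $\tilde\omega$ of degree $n-2-p$, and I take $\tilde\phi$ to be the coexact part of $i^*\tilde\omega$. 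Then $\tilde\phi\in c\mathcal C^{n-2-p}(\partial M)$, the identity $d\tilde\phi=\pm *\Lambda\phi$ holds on $\partial M$, and $\tilde\omega$ is an admissible extension of $\tilde\phi$ with $\|d\tilde\omega\|_{L^2(M)}=\|d\omega\|_{L^2(M)}$. Because $\tilde\phi$ is coexact it is automatically orthogonal to the closed subspace $i^*\mH^{n-2-p}_N(M)$, i.e.\ to $\ker\Lambda$ on $c\mathcal C^{n-2-p}(\partial M)$.

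The core is an energy estimate that I expect to follow from integration by parts on the closed manifold $\partial M$ together with Proposition~\ref{inbn}. Starting from $\|d\omega\|^2_{L^2(M)}=\ll\Lambda\phi,\phi\rr$, substituting $\Lambda\phi=\pm *d\tilde\phi$ and using $\delta*=\pm *d$ on $\partial M$ gives
\[
\|d\omega\|^2_{L^2(M)}=\pm\ll\tilde\phi,\,*d\phi\rr_{\partial M}\le\|\tilde\phi\|_{L^2(\partial M)}\,\|d\phi\|_{L^2(\partial M)}.
\]
If $\phi$ is orthogonal to the first $m+I_p-1$ eigenforms of $\Lambda$ on $c\mathcal C^p(\partial M)$ and $\tilde\phi$ is orthogonal to the first $r+I_{n-2-p}-1$ eigenforms of $\Lambda$ on $c\mathcal C^{n-2-p}(\partial M)$, the min--max principle of Theorem~\ref{MainTheorem} applied to the extensions $\omega$ and $\tilde\omega$ yields, using $\|d\tilde\omega\|=\|d\omega\|$ and the estimate above,
\[
\sigma^{(p)}_{m+I_p}\,\sigma^{(n-2-p)}_{r+I_{n-2-p}}\le\frac{\|d\omega\|^2_{L^2(M)}\,\|d\tilde\omega\|^2_{L^2(M)}}{\|\phi\|^2_{L^2(\partial M)}\,\|\tilde\phi\|^2_{L^2(\partial M)}}=\frac{\|d\omega\|^4_{L^2(M)}}{\|\phi\|^2\,\|\tilde\phi\|^2}\le\frac{\|d\phi\|^2_{L^2(\partial M)}}{\|\phi\|^2_{L^2(\partial M)}}.
\]
If in addition $\phi$ lies in the span of the first $N=I_p+m+r+b_{n-p-1}-1$ eigenforms of $\Delta_\partial$ on $c\mathcal C^p(\partial M)$, then the last ratio is at most $\lambda'^{(p)}_N$, which is exactly the claimed bound.

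It remains to produce such a $\phi$, and this dimension count is where I expect the main difficulty. On the $N$-dimensional span of the first eigenforms of $\Delta_\partial$ I impose three families of linear conditions: orthogonality of $\phi$ to the first $m+I_p-1$ eigenforms of $\Lambda$ on $c\mathcal C^p(\partial M)$; vanishing of the obstruction $[\alpha]\in H^{n-1-p}(M)$ so that the conjugate exists, costing at most $b_{n-p-1}$ conditions; and orthogonality of $\tilde\phi$ to the first $r+I_{n-2-p}-1$ eigenforms of $\Lambda$ on $c\mathcal C^{n-2-p}(\partial M)$. The decisive bookkeeping point is that, since $\tilde\phi$ is coexact, it is already orthogonal to the $I_{n-2-p}$-dimensional kernel, so the last family costs only $r-1$ genuine conditions; then $N-\big((m+I_p-1)+b_{n-p-1}+(r-1)\big)\ge 1$ produces a nonzero admissible $\phi$. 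Making this precise---identifying the obstruction space, relating $I_p$, $I_{n-2-p}$ and $b_{n-p-1}$ via the long exact sequence of $(M,\partial M)$ and Theorem~\ref{DTG}, tracking signs through Proposition~\ref{inbn}, and checking that the coexact conjugate $\tilde\phi$ admits an extension of energy at most $\|d\omega\|^2$---is the technical heart of the proof.
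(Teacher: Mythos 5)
Your proposal follows essentially the same route as the paper: the conjugate form obtained from a primitive of $*d\lambda(\phi)$ (with the $b_{n-p-1}$-dimensional exactness obstruction), the Cauchy--Schwarz energy estimate $\|d\omega\|^4_{L^2(M)}\leqslant\|\tilde\phi\|^2_{L^2(\partial M)}\|d\phi\|^2_{L^2(\partial M)}$, and the identical dimension count inside the span of the first $I_p+m+r+b_{n-p-1}-1$ eigenforms of $\Delta_\partial$ on $c\mathcal C^p(\partial M)$. The one point you defer --- that the conjugate extension actually restricts to a form in $(i^*\mathcal H^{n-2-p}(M))^\perp$ rather than merely having that form as its coexact trace component --- is handled in the paper by Hodge-decomposing the primitive $\rho_0=d\alpha+\delta\beta+\gamma$ and replacing it by $\delta\beta+\gamma_0$ with a suitably chosen harmonic field $\gamma_0$, which changes neither $d\rho_0$ nor the energy.
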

\begin{proof}
Let us recall a general construction of conjugate harmonic forms. Let $\phi\perp i^*\mathcal H^p(\partial M)$, then $\xi = *d\lambda(\phi)\in \mathcal H^{n-p-1}(M)$. Suppose that $\phi$ is such that $\xi\perp\mathcal H^{n-p-1}_N(M)$, then by Hodge decomposition theorem $\xi$ is exact. Let $\rho_0$ be a primitive of $\xi$ and let its Hodge decomposition be $\rho_0 = d\alpha + \delta\beta + \gamma$, where $\gamma\in\mH^{n-2-p}(M)$ and $\beta\in\Omega^{n-p-1}_N(M)$. 
There exists $\gamma_0\perp\mH_D^{n-p-2}(M)$ such that $i^*(\delta\beta +\gamma_0)\perp i^*\mathcal H^{n-p-2}(M)$. We call $\psi = i^*(\delta\beta +\gamma_0)$ the dual form to $\phi$ and $\rho = \delta\beta + \gamma_0$ (which, as one can easily see, coincides with $\lambda(\psi)$) the harmonic conjugate of $\lambda(\phi)$. 
\begin{lemma}
The duality map $\phi\mapsto\psi$ is well-defined, linear and injective. 
\end{lemma}
\begin{proof}
From the construction, $\psi$ is dual to $\phi$ iff $*d\lambda(\phi) = d\lambda(\psi)$. If $\psi_1$ and $\psi_2$ are both dual to $\phi$, then $d(\lambda(\psi_1) - \lambda(\psi_2)) = 0$, i.e. $\psi_1 - \psi_2\in\ker\Lambda$. At the same time, $(\psi_1-\psi_2)\perp\ker\Lambda$, therefore $\psi_1 = \psi_2$. Linearity is obvious. 

Let us prove injectivity. If $0$ form is dual to $\phi$ then $d\phi = 0$ and similar arguments as above assert that $\phi=0$.
\end{proof}

Suppose that $\psi$ is dual to $\phi$, then
\begin{equation}
\label{HPSeq}
||d\lambda(\psi)||^4_{L^2(M)} = \left(\int\limits_{\partial M}\langle\psi,i_n d\lambda(\psi)\rangle \right)^2\leqslant\int\limits_{\partial M}|\psi|^2\int\limits_{\partial M}|i_nd\lambda(\psi)|^2 = \int\limits_{\partial M}|\psi|^2\int\limits_{\partial M}|d\phi|^2,
\end{equation}
where we used Green's formula, Cauchy-Schwarz inequality and equality $i_nd\lambda(\psi) = i_n*d\lambda(\phi) = \pm*i^*d\lambda(\phi) = \pm d\phi$.

Let $\phi_i$ be the eigenforms of $\Delta_{\partial}$. Since the kernel of Hodge Laplacian is the space of harmonic $p$-forms on $\partial M$, one can choose $\phi_i$ to satisfy $\phi_1,\ldots,\phi_{I_p}\in\ker\Lambda$, $\phi_j\perp\ker\Lambda$ for $j>I_p$.
Let $\psi_i^{(q)}$ be eigenforms of $\Lambda$ on $c\mC^q(\partial M)$. Let $\phi$ belong to the space $\mathrm{span}\{\phi_{I_p+1},\ldots,\phi_{I_p+m+r-1+b_{n-p-1}}\}$ such that $\phi\perp\mathrm{span}\{\psi^{(p)}_{I_p+1},\ldots,\psi^{(p)}_{I_p+m-1}\}$ and $*d\omega\perp \mathcal H^{n-p-1}_N(M)$. The latter guarantees the existence of the form $\psi$ dual to $\phi$. Moreover, $\phi$ can be chosen so that $\psi\perp\mathrm{span}\{\psi^{(n-p-2)}_{I_{n-p-2}+1},\ldots,\psi^{(n-p-2)}_{I_{n-p-2}+r-1}\}$. By dimension count, it is easy to see that such $\phi$ exists. Then by min-max principles for $\Lambda$ and $\Delta_\partial$ and inequality~(\ref{HPSeq}) one has
\begin{equation*}
\begin{split}
\sigma^{(p)}_{m+I_p}\sigma^{(n-2-p)}_{r+I_{n-2-p}}\leqslant & \frac{||d\lambda(\phi)||^2_{L^2(M)}||d\lambda(\psi)||^2_{L^2(M)}}{||\psi||^2_{L^2(\partial M)}||\phi||^2_{L^2(\partial M)}} =\\ &\frac{||d\lambda(\psi)||^4_{L^2(M)}}{||\psi||^2_{L^2(\partial M)}||\phi||^2_{L^2(\partial M)}}\leqslant \frac{||d\phi||^2_{L^2(\partial M)}}{||\phi||^2_{L^2(\partial M)}}\leqslant  \lambda'^{(p)}_{I_p+m+r-1+b_{n-p-1}},
\end{split}
\end{equation*}
where in the first equality we used the isometry property of Hodge star and equality $*d(\lambda(\phi)) = d\lambda(\psi)$.
\end{proof}

The combination of Theorem~\ref{CompTheorem} and Theorem~\ref{HPS} yields the following generalisation of Theorem~\ref{ThYY}.
\begin{corollary}
Let $M$ be a compact oriented $n$-dimensional Riemannian manifold with nonempty boundary. Then for any two positive integers $m$ and $r$ and for any $p= 0,\ldots,n-2$, one has
\begin{equation}
\mu^{(p)}_{m+b_p}\mu^{(n-2-p)}_{r+b_{n-2-p}}\leqslant \lambda'^{(p)}_{I_p+m+r+b_{n-p-1}-1}.
\end{equation}
\end{corollary}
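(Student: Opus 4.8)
The plan is to obtain this corollary as a direct combination of the comparison estimate of Theorem~\ref{CompTheorem} with the Hersch--Payne--Schiffer--type bound of Theorem~\ref{HPS}. First I would restate Theorem~\ref{CompTheorem} in terms of the full eigenvalue sequences rather than the non-zero ones. Since $\ker L = i^*\mathcal H^p_N(M)$ has dimension $b_p$ (the map $i^*$ being injective on $\mathcal H^p_N(M)$ because its kernel would lie in $\mathcal H^p_N(M)\cap\mathcal H^p_D(M)=\{0\}$ by Theorem~\ref{DTG}(a)), and since $\ker\Lambda\cap c\mathcal C^p(\partial M)$ has dimension $I_p$ by Theorem~\ref{MainTheorem}, one has $\tilde\mu_k^{(p)} = \mu^{(p)}_{k+b_p}$ and $\tilde\sigma_k^{(p)} = \sigma^{(p)}_{k+I_p}$. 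Hence the inequality $\tilde\mu_k^{(p)}\leqslant\tilde\sigma_k^{(p)}$ of Theorem~\ref{CompTheorem} reads $\mu^{(p)}_{k+b_p}\leqslant\sigma^{(p)}_{k+I_p}$.

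Applying this reformulation in degree $p$ with $k=m$ and in degree $n-2-p$ with $k=r$ produces
\[
\mu^{(p)}_{m+b_p}\leqslant\sigma^{(p)}_{m+I_p},\qquad
\mu^{(n-2-p)}_{r+b_{n-2-p}}\leqslant\sigma^{(n-2-p)}_{r+I_{n-2-p}};
\]
both applications are legitimate because $p$ and $n-2-p$ lie simultaneously in the admissible range $\{0,\ldots,n-2\}$ whenever $p$ does. Next I would multiply these two inequalities. As $\Lambda$ is positive by Theorem~\ref{MainTheorem} and $L$ is positive by~\cite{RS}, all four eigenvalues are non-negative, so from $0\leqslant a\leqslant b$ and $0\leqslant c\leqslant d$ one concludes $ac\leqslant bd$, giving
\[
\mu^{(p)}_{m+b_p}\,\mu^{(n-2-p)}_{r+b_{n-2-p}}\leqslant\sigma^{(p)}_{m+I_p}\,\sigma^{(n-2-p)}_{r+I_{n-2-p}}.
\]
Finally I would feed the right-hand side into Theorem~\ref{HPS}, which bounds it by $\lambda'^{(p)}_{I_p+m+r+b_{n-p-1}-1}$, and chain the two displays to reach the claim.

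There is no genuine obstacle here: the statement is a formal consequence of the two theorems established above. The only points deserving attention are purely a matter of bookkeeping, namely correctly accounting for the index shifts $b_p$ and $I_p$ when passing between the non-zero spectrum and the full spectrum, and observing that the monotonicity of the product used in the multiplication step relies on the non-negativity of every eigenvalue involved, which is precisely the positivity of the two Dirichlet-to-Neumann operators.
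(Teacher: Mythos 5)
Your proposal is correct and matches the paper's own argument: the paper derives this corollary precisely by combining Theorem~\ref{CompTheorem} (in the re-indexed form $\mu^{(p)}_{k+b_p}\leqslant\sigma^{(p)}_{k+I_p}$, stated as a corollary in Section~\ref{CT}) with Theorem~\ref{HPS}. Your bookkeeping of the index shifts via $\dim\ker L = b_p$ and $\dim(\ker\Lambda\cap c\mathcal C^p(\partial M)) = I_p$, and the use of positivity to multiply the two inequalities, is exactly what the paper leaves implicit.
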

Note that $I_0=1$, so for $p=0$ this corollary yieldes the statement of Theorem~\ref{ThYY}.

\section{Eigenvalues of the unit Euclidean ball $\mathbb{B}^{n+1}$}

\label{ball}
In this section we compute eigenbasis and eigenvalues for $\Lambda$ on $\mathbb{S}^n = \partial\mathbb{B}^{n+1}$.
We follow article~\cite{RS2} where Raulot and Savo computed eigenspaces and eigenvalues for operator $L$ on $\mathbb{S}^n = \partial\mathbb{B}^{n+1}$. Note that in order to preserve notations from~\cite{RS2} we deviate from the convention that the ambient manifold has dimension $n$ and instead in this section the ambient manifold has dimension $n+1$.
In case of the ball $\mathbb{B}^{n+1}$ operators $L$, $\Lambda$ and $\Delta$ have common basis of eigenforms which we describe below.

Let $P_{k,p}$ denote the space of homogeneous polynomial $p$-forms of degree $k$ in $\mathbb{R}^{n+1}$. We introduce the following subspaces of $P_{k,p}$,
\begin{itemize}
\item $H_{k,p} = \{\omega\in P_{k,p}|\, \Delta_{\mathbb{R}^{n+1}}\omega = 0, \delta_{\mathbb{R}^{n+1}}\omega = 0\}$;
\item $H'_{k,p} = \{\omega\in H_{k,p}|\, d_{\mathbb{R}^{n+1}}\omega=0\}$;
\item $H''_{k,p} = \{\omega\in H_{k,p}|\, i_n\omega = 0\}$.
\end{itemize}
Assume $1\leqslant p \leqslant (n-1)$. Then $\mathcal H^p(\mathbb{S}^n) = 0$ and $\Omega^p(\mathbb{S}^n) = \mathcal E^p(\mathbb{S}^n)\oplus c\mathcal E^p(\mathbb{S}^n)$. It was shown in~\cite{IT} that $\mathcal E^p(\mathbb{S}^n) = \oplus_k (i^*H'_{k,p})$, $c\mathcal E^p(\mathbb{S}^n) = \oplus_k(i^* H''_{k,p})$ and $\delta\colon i^*H'_{k,p}\to i^*H''_{k+1,p-1}$ is an isomorphism. Thus, $\dim i^*H''_{1,p} = \dim i^* H'_{0,p+1} = {n+1\choose p+1}$ as all forms with constant coefficients lie in $H'_{0,p+1}$.

 We see that $H'_{k,p}\subset \mathcal H^p(\mathbb{B}^{n+1})$, therefore $\Lambda$ is identically zero on each $i^*(H'_{k,p})$. Moreover, for $\phi\in i^*(H''_{k,p})$ the form $\lambda(\phi)$ satisfies $i_n\lambda(\phi) = 0$, therefore, $L(\phi) = \Lambda(\phi)$. 

We summarise observations above and results of~\cite{IT, RS2} in the following theorem.
\begin{theorem}
\label{eigenspace}
Spaces $i^*H'_{k-1,p}$ and $i^*H''_{k,p}$ for $k\geqslant 1$ form common eigenbasis of  $\Lambda$, $L$ and $\Delta$. The corresponding eigenvalues are given below.
\begin{itemize}
\item If $\phi\in i^*H'_{k-1,p}$ then $\Lambda\phi = 0$, $L\phi = (k+p-1)\frac{n+2k+1}{n+2k-1}\phi$ and $\Delta\phi = (k+p-1)(n+k-p)\phi$.
\item If $\phi\in i^*H''_{k,p}$ then $\Lambda\phi = L\phi = (k+p)\phi$ and $\Delta\phi = (k+p)(n+k-p-1)\phi$.
\end{itemize}
 \end{theorem}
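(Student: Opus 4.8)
The plan is to compute the eigenvalues of $\Lambda$, $L$, and $\Delta$ on the explicit eigenbasis $\{i^*H'_{k-1,p}, i^*H''_{k,p}\}$ by reducing everything to harmonic homogeneous polynomial forms on the ball. The essential observation, already assembled in the text preceding the statement, is that $H'_{k,p}\subset\mathcal H^p(\mathbb{B}^{n+1})$ forces $\Lambda$ to vanish on $i^*(H'_{k,p})$, while for $\phi\in i^*H''_{k,p}$ the solution $\lambda(\phi)$ has vanishing normal part, so that $\Lambda\phi = L\phi$. Thus the entire statement for $\Lambda$ follows once the eigenvalues of $L$ and $\Delta$ are established, and the $L$-eigenvalues are exactly the computation of Raulot and Savo in~\cite{RS2}, which I would cite directly. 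So the real content is verifying (i) that these spaces genuinely form a common eigenbasis and (ii) the $\Delta$-eigenvalues.

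For step (i), I would invoke the decomposition from~\cite{IT}: for $1\leqslant p\leqslant n-1$ one has $\mathcal E^p(\mathbb{S}^n)=\oplus_k i^*H'_{k,p}$ and $c\mathcal E^p(\mathbb{S}^n)=\oplus_k i^*H''_{k,p}$, together with $\Omega^p(\mathbb{S}^n)=\mathcal E^p\oplus c\mathcal E^p$ since $\mathcal H^p(\mathbb{S}^n)=0$. This gives completeness of the proposed basis. That each summand is $\Delta$-invariant is the classical fact that $i^*$ of a harmonic homogeneous $p$-form on $\mathbb{B}^{n+1}$ is an eigenform of the Hodge Laplacian on $\mathbb{S}^n$; the eigenvalue depends only on the degree $k$ and the form-degree $p$, which is what pins down a common eigenbasis.

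For step (ii), the $\Delta$-eigenvalues come from a direct computation on the sphere. The key tool is the relation between the Euclidean Laplacian on homogeneous forms and the Hodge Laplacian on $\mathbb{S}^n$ via the radial/tangential splitting of forms near the boundary $r=1$. Concretely, writing a harmonic homogeneous form in terms of its tangential restriction and using $\Delta_{\mathbb{R}^{n+1}}=0$ together with the spherical decomposition $\Delta_{\mathbb{R}^{n+1}} = \partial_r^2 + \frac{n}{r}\partial_r + \frac{1}{r^2}\Delta_{\mathbb{S}^n}$ (suitably interpreted for forms), I would solve for the spectral parameter as a function of the homogeneity degree $k$. For $\phi\in i^*H''_{k,p}$ this yields $\Delta\phi=(k+p)(n+k-p-1)\phi$, and for $\phi\in i^*H'_{k-1,p}$ one gets $\Delta\phi=(k+p-1)(n+k-p)\phi$; these are precisely the values in~\cite{RS2}, so I would reference that computation rather than reproduce the bookkeeping.

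\textbf{The main obstacle} is bookkeeping of form-degrees and the contraction/restriction operators in the radial decomposition: a homogeneous $p$-form in $\mathbb{R}^{n+1}$ splits into a part tangential to the spheres and a part containing $dr$, and keeping track of how $\delta_{\mathbb{R}^{n+1}}\omega=0$ and $i_n\omega=0$ interact with this splitting (and hence which of $H'$ or $H''$ a given form lands in) is where errors creep in. The cleanest route is to import the spectral identities from~\cite{IT,RS2} wholesale, so that the only new verification is the two elementary reductions---$\Lambda\equiv 0$ on $i^*H'$ and $\Lambda=L$ on $i^*H''$---both of which follow immediately from the definitions of $\Lambda$ and $L$ and the observation that $\lambda(\phi)$ for $\phi\in i^*H''_{k,p}$ is the restriction to the ball of a Neumann harmonic field.
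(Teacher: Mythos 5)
Your proposal is correct and follows essentially the same route as the paper: the paper likewise imports the decomposition $\mathcal E^p(\mathbb{S}^n)=\oplus_k i^*H'_{k,p}$, $c\mathcal E^p(\mathbb{S}^n)=\oplus_k i^*H''_{k,p}$ from~\cite{IT} and the $L$- and $\Delta$-eigenvalues from~\cite{RS2}, and reduces the new content to the two observations that $H'_{k,p}\subset\mathcal H^p(\mathbb{B}^{n+1})$ kills $\Lambda$ on $i^*H'_{k,p}$ and that for $\phi\in i^*H''_{k,p}$ the polynomial form itself solves both extension problems, so $\Lambda\phi=L\phi$. (Only a terminological quibble: that common extension is a coclosed harmonic form with $i_n\omega=0$, not a Neumann harmonic field, since it need not be closed.)
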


This theorem implies sharpness properties of inequality~(\ref{HPSineq}) stated in Section~\ref{discussion} and Remark~\ref{sharp}. Indeed, according to Theorem~\ref{eigenspace} inequality~\ref{HPSineq} is sharp for $m=r=1$. Moreover, it is sharp as long as eigenvalues involved coincide with the first eigenvalue. Statement after Theorem~\ref{HPS2} follows from the fact that the multiplicity of $\sigma^{(p)}_1$ and $\lambda'^{(p)}_1$ is equal to $\dim i^*H''_{1,p} = \dim i^*H'_{0,p+1} = \dim H'_{0,p+1} = {n+1 \choose p+1}$.

\subsection*{Acknowledgements} The author is grateful to D. Jakobson, N. Nigam, I. Polterovich, A. Savo and A. Strohmaier for fruitful discussions. Part of this project was completed when the author was visiting N. Nigam at Simon Fraser University. Its hospitality is greatly acknowledged.

This research was partially supported by Tomlinson Fellowship. This work is a part of the author’s PhD thesis at McGill University under the supervision of Dmitry Jakobson and Iosif Polterovich.

\end{document}